\DeclareMathOperator{\dv}{div}
\newcommand{\tor}{\mathbb{T}^2}
\newcommand{\T}{\mathbb{T}^d}
\DeclareMathOperator{\supp}{supp}
\theoremstyle{plain}
\newtheorem{thm}{Theorem}[section]
\newtheorem{lemma}[thm]{Lemma}
\newtheorem{prop}[thm]{Proposition}
\newtheorem{cor}[thm]{Corollary}
\theoremstyle{definition}
\newtheorem{dfn}[thm]{Definition}
\newtheorem{rem}[thm]{Remark}
\newcommand{\dx}{\mathrm{d}x}
\newcommand{\ds}{\mathrm{d}s}
\newcommand{\di}{\mathrm{d}x_1}
\newcommand{\dii}{\mathrm{d}x_2}
\newcommand{\curl}{\operatorname{curl}}
\newcommand{\sumk}{\sum_{k=1}^4}
\newcommand{\tme}{\operatorname{time}}
\newcommand{\quadr}{\operatorname{quad}}
\newcommand{\Rtr}{\overset{\circ}{R_0}}
\def\R{\mathbb{R}}
\def\N{\mathbb{N}}
\def\Z{\mathbb{Z}}
\def\T{\mathbb{T}}
\def\div{\operatorname{div}}
\def\curl{\operatorname{curl}}
\def\supp{\mathrm{supp\, }}
\title[Non-uniqueness for 2D Euler in Hardy spaces]{Non-uniqueness and energy dissipation for 2D Euler equations with vorticity in Hardy spaces}
\author{Miriam Buck, Stefano Modena}
\email{mbuck@mathematik.tu-darmstadt.de, stefano.modena@gssi.it}
\address{\noindent Technische Universit\"at Darmstadt, Fachbereich Mathematik, D-64285 Darmstadt, Germany \newline Gran Sasso Science Institute, 67100 L'Aquila, Italy}
\date{\today}                     
\begin{document}

\begin{abstract}
We construct by convex integration examples of energy dissipating solutions to the 2D Euler equations on $\R^2$ with vorticity in the Hardy space $H^p(\R^2)$, for any $2/3<p<1$.
\end{abstract}

\thanks{The second named author would like to thank Jan Burczak for several interesting discussions on the topics considered in this paper.}

\maketitle

\section{Introduction}
In this paper we consider the 2-dimensional incompressible Euler equations on the full space $\R^2$
\begin{align}\label{2D Euler}
\begin{cases}
\partial_t u + \dv(u\otimes u) + \nabla p = 0,\\
\dv u = 0,\\
u(\cdot, 0) = u_0,
\end{cases}
\end{align}
where $u:\R^2\times [0,1]\rightarrow\R^2$ is the velocity field of some fluid and $p:\R^2\times [0,1]\rightarrow\R$ is the corresponding (scalar) pressure. 

It is well known that the system \eqref{2D Euler} is globally well posed in $W^{s,2}$ for $s>2$, in the sense that for initial data $u_0 \in W^{s,2}$ there is a \emph{unique} solution $u \in C([0,1], W^{s,2}(\R^2))$ defined on the whole time interval $[0,1]$ (more precisely on the whole time half-line $[0, + \infty)$). 

It is however of fundamental importance, both mathematically and physically, to understand what happens in case of ``rougher'' initial data, and in particular if it is still possible, in case of rougher initial data, to prove existence and uniqueness of (weak) solutions to \eqref{2D Euler}. 

\subsection{Short literature overview}

The starting point of this analysis is the observation that \eqref{2D Euler} can be formally rewritten as a transport equation for the vorticity $\omega = \curl u$ via
\begin{align}\label{voriticty eq}
\begin{cases}
\partial_t\omega + u\cdot \nabla\omega = 0,\\
u= \nabla^\perp\Delta^{-1}\omega.
\end{cases}
\end{align}
From \eqref{voriticty eq} it is clear that the $L^p$ norm of the vorticity of any smooth solution to \eqref{2D Euler} is conserved in time, for any $p\in [1,\infty]$. In the framework of weak solutions, it is thus natural to ask the following question:\\

\textbf{Q1}: \noindent \emph{For $u_0\in L^2(\R^2)$ with $\curl u_0\in L^1(\R^2) \cap L^p(\R^2)$ for some $p\in [1,\infty]$, does there exist a unique solution $u\in C([0,1],L^2(\R^2))$ to \eqref{2D Euler} with $\curl u \in C([0,1],L^1(\R^2) \cap L^p(\R^2))$ and initial datum $u_0$?}

\smallskip

\noindent or, more generally, 

\smallskip

\textbf{Q2}: \noindent \emph{For $u_0\in L^2(\R^2)$ with $\curl u_0\in X$ for some Banach space $X$, does there exist a unique solution $u\in C([0,1],L^2(\R^2))$ to \eqref{2D Euler} with $\curl u \in C([0,1],X)$ and initial datum $u_0$?}\\

The first result in this direction is due to Yudovich \cite{yudovich1962some,yudovich1963non} for the case $p=\infty$ and it states that for any initial datum $u_0\in L^2$ with $\omega_0\in L^1\cap L^\infty$, there exists a unique global solution $u\in C_tL^2_x$ with $\omega\in L^\infty_t(L^1_x\cap L^\infty_x)$ to \eqref{voriticty eq}. Yudovich result is based on the observation that even though a bounded vorticity $\omega$ does not imply Lipschitz bound on the velocity field $u$ (hence the classical ``smooth'' theory can not be simply applied), nevertheless it is possible to deduce $\log$-Lipschitz bounds on $u$, which are enough to show well posedness. 

For $p<\infty,$ the question turns out to be much more delicate (and still open in its generality to this date): indeed, an $L^p$ bound on $\omega$ implies, in general, only bounds on $u$  in some $C^\alpha$ space of H\"older continuous functions, and this is in general not enough to apply Yudovich techniques and show well-posedness of \eqref{voriticty eq} (some partial extension of Yudovich's result appeared in \cite{loeper2006uniqueness}, where functions with vorticity in $\bigcap_{p<\infty} L^p$ were considered, with strong bounds  on the growth of $L^p$ norms as $p \to \infty$). 

There have been however in the last years several important results, providing partial answers to questions {\bf Q1} and {\bf Q2} above.
We mention few of them, and in particular those concerning the problem of non-uniqueness of weak solutions.

In \cite{vishik2018instability1,vishik2018instability2} Vishik gave a negative answer to {\bf Q1}, proving nonuniqueness in the class of solutions having vorticity $\omega \in L^\infty_t(L^p_x)$, however not for the Euler system \eqref{2D Euler} (or \eqref{voriticty eq}), but for the Euler system \eqref{2D Euler} with a $L^1_t (L^1_x \cap L^p_x)$ external force  (thus allowing for an additional ``degree of freedom''). Vishik's proof is based on a careful analysis of the linearized operator $\mathcal{L}$ associated to \eqref{2D Euler} and on the construction of an \emph{unstable} eigenvalue for $\mathcal{L}$. 

Another approach based on numerical simulations has been proposed by Bressan and Shen in \cite{bressan2020posteriori}, where an initial profile is constructed for which there is numerical evidence of non-uniqueness, but a rigorous proof of this result is still missing. 

Very recently, in \cite{mengual2023non}, Mengual proved  that for any $2<p<\infty$ there exists initial data $u_0\in L^2(\R^2)$ with initial vorticity $\curl u_0\in L^1\cap L^p$ for which there are infinitely many admissible solutions $u\in C_tL^2$ to \eqref{2D Euler} but with the drawback that $\curl u(t, \cdot)$ does not belong to $L^p(\R^2)$ for any $t>0$. An admissible solution is a weak solution that does not increase the kinematic energy, i.e. $\frac{1}{2}\|u(t)\|^2_{L^2}\leq \frac{1}{2}\|u(0)\|^2_{L^2}$ for a.e. $t$.


Concerning the more general question {\bf Q2}, Bruè and Colombo address this question in \cite{brue2021nonuniqueness} for the case that $X$ is the Lorentz space $X=L^{1,\infty}$.  They construct a sequence $(u_n)_n$ of smooth ``approximate'' solutions to \eqref{2D Euler}, converging to an ``anomalous'' weak solution $u$ of \eqref{2D Euler} (in the sense that $u$ is nonzero, but $u|_{t=0} = 0$, thus providing an example of non-uniqueness) and having the additional property that the sequence of vorticities $(\curl u_n)_n$ is a Cauchy sequence in $L^{1,\infty}$.

The construction in \cite{brue2021nonuniqueness} is based on an \emph{intermittent convex integration scheme}. As we shall explain in Section \ref{ssec_our_result} below, it is expected that, in general, intermittent convex integration schemes in dimension $d$ can provide (``anomalous'') weak solutions to the Euler equations having vorticity in $L^p$ only if
\begin{equation}
\label{eq_lp_vorticity}
p < \frac{2d}{d+2}
\end{equation}
In particular, in dimension $d=2$, it is not possible with the current techniques to construct solutions $u$ with $\curl u \in L^p$, not even for $p=1$. This motivated the authors in \cite{brue2021nonuniqueness} to look for velocity fields with vorticity in $L^{1, \infty}$, a function space which is ``weaker'' than $L^1$ in terms of integrability, but which scales as $L^1$. 

It has however to be noted that, as we mentioned before, the result in \cite{brue2021nonuniqueness} shows the existence of a sequence $\{u_n\}_n$ of approximate solutions to \eqref{2D Euler} converging strongly in $L^2$ to an anomalous weak solution $u$ to \eqref{2D Euler} and whose corresponding vorticities $\{\curl u_n\}_n$ build a Cauchy sequence in $L^{1,\infty}$ which thus has a limit $\omega$ in $L^{1, \infty}$. However, since $L^{1,\infty}$ is not a space of distributions (precisely, it does not embed into $\mathcal{D}'$),  it is not clear whether and in what sense the distributional vorticity of the solution $u$ (or, in other words, the distributional limit of $\curl u_n$) coincide with the $L^{1,\infty}$ limit $\omega$. 

Indeed, in general, there is no connection between distributional limit and limit in $L^{1,\infty}$. Standard examples where this absence of connection can be explicitly seen can be constructed even in one dimension, see, for instance, Section \ref{ssec_counterexample} below, where a sequence $(f_n)$ of  piecewise constant maps is constructed, with $f_n$ converging to two very different ``objects'' in distributions and in $L^{1,\infty}$ respectively: a Dirac delta in $\mathcal{D}'$ and the zero function in $L^{1,\infty}$. Similar constructions can also be done for smooth $(f_n)$.

\subsection{Our result}
\label{ssec_our_result}

The result by Bru\`e and Colombo \cite{brue2021nonuniqueness} motivated us to see if the methods used in \cite{brue2021nonuniqueness} could be adapted to show non-uniqueness of weak solutions to \eqref{2D Euler} with vorticity in some other function space $X$ that is ``weaker'' than $L^1$ in terms of integrability, but at the same time it does embed into $\mathcal{D}'$, avoiding the issues connected to the $L^{1, \infty}$ topology. 

The real Hardy spaces $H^p$ for $p<1$ (thus matching with \eqref{eq_lp_vorticity} in dimension $d=2$) turns out to be a natural choice, as $H^p$ does embed into $\mathcal{D}'$ for any $p \in (0,\infty)$ (see  Definition \ref{def: hardy space} for the precise definition of the space $H^p$). Precisely, we prove the following theorem.
\begin{thm}[Main Theorem]
\label{thm:main}
Let $\frac{2}{3}<p<1$. For any energy profile $e\in C^\infty\left([0,1];\left[\frac{1}{2},1\right]\right)$ there exists a solution $u\in C([0,1],L^2(\R^2))$ to \eqref{2D Euler} with
\begin{enumerate}[(i)]
\item $\int_{\R^2} |u|^2(t) \,\dx = e(t)$,
\item $\curl u \in C([0,1],H^p(\R^2))$.
\end{enumerate}
In particular, there exist energy dissipating solutions $u \in C_t L^2_x$ to \eqref{2D Euler} with $\curl u \in C_t H^p_x$. 
 
Furthermore, for energy profiles $e_1,e_2$ such that $e_1=e_2$ on $[0,t_0]$ for some $t_0\in [0,1]$, there exist two solutions $u_1,u_2$ satisfying $(i)$, $(ii)$ with $u_1(t)=u_2(t)$ for $t\in [0,t_0]$.
\end{thm}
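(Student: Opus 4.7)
The plan is to prove the theorem by an intermittent convex integration scheme in the spirit of Bru\`e-Colombo, replacing their $L^{1,\infty}$-based building blocks with ones whose vorticity lies in $H^p(\R^2)$. I would construct inductively a sequence $(u_q, p_q, R_q)_{q \ge 0}$ of solutions to the Euler-Reynolds system
\[
\partial_t u_q + \dv(u_q \otimes u_q) + \nabla p_q = \dv R_q, \qquad \dv u_q = 0,
\]
starting from $u_0 \equiv 0$, $R_0 \equiv 0$, indexed by a superexponential frequency $\lambda_q$ and a concentration parameter $\mu_q$ with $\lambda_q/\mu_q \to \infty$. The target inductive estimates are $\|R_q\|_{C_t L^1_x} \le \delta_{q+1}$, $\|u_{q+1} - u_q\|_{C_t L^2_x} \le M \delta_{q+1}^{1/2}$, $\|\curl u_{q+1} - \curl u_q\|_{C_t H^p_x} \le M \delta_{q+1}^{1/2}$, and $\bigl|\int|u_q|^2(t)\dx - e(t)\bigr| \le \delta_{q+1}$ for a summable sequence $\delta_q \downarrow 0$. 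Telescoping these yields (i), (ii) and strong convergence to a weak solution of \eqref{2D Euler}.

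At each step I would take a perturbation $w_{q+1} = w^{(p)}_{q+1} + w^{(c)}_{q+1} + w^{(t)}_{q+1}$: principal, divergence-free corrector, and temporal corrector (the last needed to absorb high-high oscillations into a pressure and avoid losses in the inverse divergence operator $\adiv$). The principal part has the form
\[
w^{(p)}_{q+1}(x,t) = \sum_Q a_Q(x,t)\, \W_{\mu_{q+1},\lambda_{q+1}}(x,t),
\]
where the amplitudes $\{a_Q\}$ come from the Nash-type geometric lemma so that $\sum_Q a_Q^2\, \W\otimes \W$ cancels $\rho(x,t)\Id - R_q$ to leading order, with $\rho$ tuned to the prescribed energy defect, and each $\W$ is a stationary Mikado-type flow of Euler, concentrated along a tube of width $\mu_{q+1}^{-1}$ and oscillating at frequency $\lambda_{q+1}$. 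The crucial design choice is that $\curl \W$ is supported on a set of small area, is $L^\infty$-normalized according to the $H^p$-atom scaling, and has zero spatial mean. In $\R^2$ with $2/3 < p < 1$, the atomic characterization of $H^p$ requires only the single vanishing moment $\int a = 0$ (since $\lfloor 2(1/p - 1)\rfloor = 0$ for $p > 2/3$); this is precisely where the assumption $p > 2/3$ enters, as Mikado profiles do not readily accommodate higher-order cancellations.

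The main obstacle will be proving the $H^p$ bound on $\curl w^{(p)}_{q+1}$: the slowly varying amplitudes $a_Q(x,t)$ break the exact cancellation of the underlying atoms, and $H^p$ is not closed under multiplication by smooth cutoffs in a straightforward manner when $p < 1$. I would address this by localizing on the scale $\lambda_{q+1}^{-1}$ on which $a_Q$ is essentially constant, decomposing each oscillation into $H^p$-atoms or molecules on this scale, restoring the mean-zero property by subtracting local means, and transferring the resulting defect into the new Reynolds stress via $\adiv$. Once this $H^p$ estimate is established, standard intermittent estimates on the transport, Nash, oscillation and corrector errors of the Euler-Reynolds equation close the induction through the balance between $\mu_{q+1}$ and $\lambda_{q+1}$.

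Finally, the prescribed energy is enforced by the usual scheme: the near-$L^2$-orthogonality of the high-frequency building blocks gives $\int|u_{q+1}|^2(t)\dx \approx \int|u_q|^2(t)\dx + \int \mathrm{tr}(\rho(x,t)\Id)\dx$ up to error of order $\delta_{q+2}$, so the energy induction closes by tuning $\rho$. The ``common initial interval'' assertion is obtained by multiplying $\rho$ and each $a_Q$ by a temporal cutoff vanishing on $[0,t_0]$: if $e_1 = e_2$ on $[0,t_0]$, then at every step the two perturbations coincide on $[0,t_0]$, hence so do their $C_t L^2_x$-limits $u_1, u_2$.
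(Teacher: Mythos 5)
Your proposal has the right high-level shape (iterative Euler--Reynolds-type scheme, intermittent building blocks, atomic estimate for $H^p$, energy pumping, common-initial-interval via temporal cutoff), but there are two concrete points at which it would fail for the stated range $2/3<p<1$, and both are central to what makes the paper's construction work.

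\textbf{Tube concentration does not close the $H^p$ bound in the stated range.} You propose stationary Mikado-type profiles concentrated along a tube of width $\mu^{-1}$. In 2D such a tube, after oscillation at frequency $\lambda$, is covered by roughly $\lambda^2\mu$ balls of radius $(\lambda\mu)^{-1}$, on each of which $\|\curl \W\|_{L^\infty}\sim\lambda\mu^{3/2}$. The atomic estimate then gives
\[
\|\curl \W\|_{H^p}^p\ \lesssim\ \lambda^2\mu\cdot\bigl((\lambda\mu)^{-2/p}\,\lambda\mu^{3/2}\bigr)^p\ =\ \lambda^{p}\,\mu^{\,3p/2-1},
\]
so $\|\curl \W\|_{H^p}\lesssim\lambda\,\mu^{3/2-1/p}$, which is small only when $p<2/3$ --- exactly the \emph{complement} of the theorem's range. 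To cover $2/3<p<1$ the paper concentrates the building blocks in \emph{both} directions, with two separate scales $\mu_1\ll\mu_2$ (the asymmetry being forced by the divergence-free corrector), yielding curl of $L^\infty$ size $\sim\lambda\mu_1^{1/2}\mu_2^{3/2}$ supported on $\sim\lambda^2$ balls of radius $(\lambda\mu_1)^{-1}$ and hence $\|\curl w\|_{H^p}\lesssim\lambda\,\mu_1^{1/2-2/p}\mu_2^{3/2}$, which does close for $p<1$ once one chooses $\mu_1=\lambda^\alpha$, $\mu_2=\lambda\mu_1$ with $\alpha$ large. Such blocks are necessarily non-stationary (they translate at speed $\omega$), so this also changes the nature of the temporal corrector. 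A related, more minor point: the paper avoids your proposed ``subtract local means'' step entirely by writing the perturbation as $w=\sum_k\nabla^\perp H^k$ with $H^k$ compactly supported in small disjoint bumps; then $\curl w=-\sum_k\Delta H^k$ restricted to any such bump has zero integral automatically, with no new error to absorb.

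\textbf{You have not addressed the fact that the theorem is on $\R^2$, not $\T^2$.} Your iteration is phrased in the standard Euler--Reynolds form $\partial_t u_q+\dv(u_q\otimes u_q)+\nabla p_q=\dv R_q$, which implicitly assumes a divergence inverse bounded on $L^1$. That exists on $\T^2$ but not on $\R^2$, and the errors to be cancelled (after the cutoff needed to make the perturbation compactly supported) are \emph{not} periodic functions but products $f\,u_\lambda$ with $f\in C_c^\infty(\R^2)$. The paper replaces the Euler--Reynolds system with a Reynolds-\emph{defect} system
\[
\partial_t u+\dv(u\otimes u)+\nabla p=-r-\dv\overset{\circ}{R},
\]
introduces iterated ``integration-by-parts'' antidivergences $S_N$, $\tilde S_N$ with output $(r,R)$ where $r$ has compact support and high-order smallness, and then kills $r$ at the next step via the additional time-corrector $v=\mathbb{P}\int_0^t r_0\,\mathrm{d}s$ in the perturbation Ansatz. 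The inductive quantities, and in particular the $H^p$ estimate on $\int_0^t\curl r$, are organized around this structure. Without some replacement for this mechanism your iteration cannot be made to close on the full plane.

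On the $p>2/3$ constraint: your explanation (only the zeroth moment is needed for atoms when $p>2/3$, and the curl of a compactly supported bump is automatically mean-zero) is indeed the mechanism used in the paper; this part of your reasoning is sound.
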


\begin{cor}
Let $\frac{2}{3}<p<1$. There are two admissible (in the sense that the total kinetic energy is non-increasing in time) solutions $u_1, u_2 \in C([0,1]; L^2(\R^2))$ with $\curl u_1, \curl u_2 \in C([0,1]; H^p(\R^2))$ with the same initial datum $u_1|_{t=0} = u_2|_{t=0}$.
\end{cor}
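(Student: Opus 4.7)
The plan is to read the corollary off directly from the Main Theorem by an appropriate choice of two admissible energy profiles that coincide on an initial sub-interval. Fix any $t_0 \in (0,1)$ and choose $e_1, e_2 \in C^\infty\bigl([0,1];[1/2,1]\bigr)$ with the following properties: both $e_i$ are non-increasing on $[0,1]$, $e_1(t)=e_2(t)$ for all $t\in[0,t_0]$, and $e_1(t)\neq e_2(t)$ for some $t\in(t_0,1]$. A concrete choice is $e_1\equiv 1$ on $[0,1]$ and $e_2$ equal to $1$ on $[0,t_0]$, smoothly decreasing to $1/2$ on $[t_0,1]$; both clearly lie in $C^\infty([0,1];[1/2,1])$ and are non-increasing.

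Next, invoke Theorem \ref{thm:main} twice, once with each energy profile $e_i$, obtaining solutions $u_i \in C([0,1],L^2(\R^2))$ to \eqref{2D Euler} with $\int_{\R^2} |u_i|^2(t)\,\dx = e_i(t)$ and $\curl u_i \in C([0,1],H^p(\R^2))$. Since $e_1 \equiv e_2$ on $[0,t_0]$, the ``furthermore'' clause of Theorem \ref{thm:main} can be applied to produce $u_1,u_2$ which in addition satisfy $u_1(t)=u_2(t)$ for all $t\in[0,t_0]$; in particular $u_1|_{t=0}=u_2|_{t=0}$.

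Admissibility is then immediate from the fact that $\tfrac{1}{2}\|u_i(t)\|_{L^2}^2 = \tfrac{1}{2}e_i(t)$ is non-increasing in $t$ by construction, so the total kinetic energy does not increase. Non-coincidence of the two solutions follows because at any time $t\in(t_0,1]$ for which $e_1(t)\neq e_2(t)$ we have $\|u_1(t)\|_{L^2}^2 \neq \|u_2(t)\|_{L^2}^2$, so in particular $u_1(t)\neq u_2(t)$.

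No real obstacle is expected here; the only care needed is choosing the profiles within $[1/2,1]$ and with matching monotonicity so that the energy inequality (admissibility) holds on the whole interval $[0,1]$ rather than merely on $[0,t_0]$. Everything else is a direct application of the Main Theorem.
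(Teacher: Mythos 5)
Your proof is correct and follows essentially the same route as the paper: the paper's proof also picks two non-increasing energy profiles that agree on an initial interval and differ afterwards, then invokes the ``furthermore'' clause of Theorem \ref{thm:main}. You simply spell out the concrete profiles and the admissibility/non-coincidence observations that the paper leaves implicit.
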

\begin{proof}
The proof follows immediately from Theorem \ref{thm:main}, picking two non-increasing energy profiles $e_1, e_2$ which coincide on $[0,1/2]$ and are different from each other on $[1/2,1]$. 
\end{proof}

\begin{rem}
We add some remarks about the statement of Theorem \ref{thm:main}. 
\begin{enumerate}
\item Differently from typical results in convex integration, we work on the full space $\R^2$ and not on the periodic domain $\T^2$. This is motivated by the fact that Hardy spaces are usually defined and studied on the full space and it is quite hard to find references for Hardy spaces on $\T^2$ (or $\T^d$). This creates some technical troubles we are going to discuss in Section \ref{ssec_technical_nov}.
\item The constraint $p>2/3$ comes exactly from the fact that we are working on the full space, and it tells essentially that the objects we can construct are not decaying \emph{too} fast at $\infty$. We expect this constraint can be removed if one works in Hardy spaces on $\T^2$.
\item Differently than in \cite{brue2021nonuniqueness}, condition (ii) in the statement of Theorem \ref{thm:main} means precisely that the \emph{distributional} $\curl$ of $u(t)$ belongs to $H^p$, for all $t$ (with continuous dependence on time). 
\end{enumerate}
\end{rem}

We wish now to spend some words in explaining why conditions \eqref{eq_lp_vorticity} plays a fundamental role (both in \cite{brue2021nonuniqueness} and in our result), and therefore why we were able to show Theorem \ref{thm:main} only under the condition $p<1$. 

As in \cite{brue2021nonuniqueness}, we use a convex integration technique in the spirit of De Lellis and Székelyhidi works on the 3D Euler equations in the framework of Onsager's Theorem (see \cite{de2009euler,de2014dissipative,de2013dissipative,isett2018proof,buckmaster2018}). The outline in all of these schemes is an iterative construction where, starting from an initial approximate solution, one adds fast oscillating perturbations with a higher frequency $\lambda_n \to \infty$ with respect to the typical frequencies $\lambda_{n-1}$ in the previous approximation. In case of the Euler equation, given an approximate solution $(u_{n-1},p_{n-1},R_{n-1})$ with error term on the right hand side
\begin{align}\label{eq: error previous}
\partial_t u_{n-1} + \dv(u_{n-1}\otimes u_{n-1}) + \nabla p_{n-1}= - \dv R_{n-1},
\end{align}
one makes the Ansatz
\begin{align*}
u_{n}(t,x)&= u_{n-1}(t,x) + w_n(t,x) + \text{lower order corrector terms}
\end{align*}
with
\begin{align*}
w_n(t,x) &= a_{n-1}(t,x) W_{\lambda_n},\\
W_{\lambda_n}(x) = W(\lambda_n x)&: \text{ fast oscillating building block},\\
a_{n-1}&: \text{ slowly varying coefficient}, \quad a_{n-1} \approx |R_{n-1}|^{1/2}.
\end{align*}
The interaction of $w_n$ (having frequencies $\lambda_n$) with itself from the nonlinearity of the equation produces a term having frequencies $\approx \lambda_{n-1}$ and it allows therefore for the cancellation of the previous error, provided
\begin{align*}
a_{n-1}^2\int_{\tor}W_{\lambda_n}\otimes W_{\lambda_n}\,\dx \, \approx R_{n-1} \int_{\tor}W_{\lambda_n}\otimes W_{\lambda_n}\,\dx \, \sim\, R_{n-1}.
\end{align*}
In particular, this forces us to choose a building block $W$ such that
\begin{align}\label{scaling w}
\int_{\tor} W \otimes W \,\dx = \int_{\tor} W_{\lambda_n} \otimes W_{\lambda_n} \,\dx \, \sim \, 1,
\end{align}
which in turn implies (taking the trace in the above relations) that
\begin{equation}
\label{eq_l2_norm_w}
\|W\|_{L^2}^2 = \|W_{\lambda_n}\|_{L^2}^2 \, \sim \, 1. 
\end{equation}
Clearly, since $W_{\lambda_n}$ is fast oscillating with frequency $\lambda_n \gg 1$ one expects very little control on the first  derivative of $W_{\lambda_n}$ (and thus also on $\curl u_n$). In particular, one can not expect that  $\|\nabla W_{\lambda_n}\|_{L^\infty}$ or even  $\|\nabla W_{\lambda_n}\|_{L^2}$ stays bounded as $n \to \infty$. 

There is however some hope in controlling $\|\nabla W_{\lambda_n}\|_{L^p}$ if $p \ll 2$, or, more precisely, if \eqref{eq_lp_vorticity} holds. Indeed, for those $p$'s for which \eqref{eq_lp_vorticity} does not holds, we have the embedding $W^{1,p} \hookrightarrow L^2$ and thus \eqref{eq_l2_norm_w} combined with the Sobolev inequality gives
\begin{equation*}
1 \, \sim \, \|W\|_{L^2}^2 \leq \|\nabla W\|_{L^p}
\end{equation*}
so that there is no hope in showing smallness of $\|\nabla W\|_{L^p}$. On the other side, if \eqref{eq_lp_vorticity} holds, the Sobolev inequality fails and thus it is possible to construct a sequence of building blocks $W_{\lambda_n}$ oscillating with frequencies $\lambda_n$, satisfying \eqref{eq_l2_norm_w} and, at the same time, having
\begin{equation*}
\|\nabla W_{\lambda_n}\|_{L^p} \to 0 \text{ as } n \to \infty. 
\end{equation*}

This was the crucial observation of Buckmaster and Vicol in the groundbreaking work \cite{buckmaster2019nonuniqueness}, where the authors apply a convex integration scheme to the Navier-Stokes equations and need therefore to control higher order derivatives of the perturbation, because of the presence of the dissipative term in the system. Similar observations were used also in \cite{modena2018non}, \cite{modena2019non}, \cite{modena2020convex}, \cite{brue2021positive} \cite{cheskidov2021nonuniqueness}, \cite{cheskidov2022extreme}, \cite{giri2021non}, \cite{pitcho2021almost} for constructing counterexamples to uniqueness for the transport equations with Sobolev vector fields and other more recent works (see e.g. \cite{burczak-mod-sze21}, \cite{novack1}, \cite{giri20233},\cite{giri2023wavelet}).

As we observed before, in dimension $d=2$, condition \eqref{eq_lp_vorticity} corresponds to $p<1$, hence preventing the possibility of estimating $\curl u$ in $L^1$ with the current techniques. On the other side, the key observation in \cite{brue2021nonuniqueness} is that  
for the Lorentz space $L^{1,\infty}$, the Sobolev embedding fails,
 \begin{align*}
 \|\nabla u\|_{L^{1,\infty}}\nleqslant \|u\|_{L^2} \text{ in general, for } u \in C^\infty(\T^2),
 \end{align*}
and this made the construction in \cite{brue2021nonuniqueness} possible.

If one were allowed to choose $p<1$ in \eqref{eq_lp_vorticity}, the embedding 
\begin{align*}
\|\nabla u\|_{L^{p}}\nleqslant \|u\|_{L^2}
\end{align*}
would also fail. Even though $L^p$ spaces are defined also for $p<1$, they do not embed continuously into $\mathcal{D}'$, hence a construction with vorticity in $L^p$ for $p<1$ would suffer from the same issues as the construction in Lorentz spaces. 

It turns however out that a feasible subsitute for $L^p$ in the range $p \in (0, \infty)$ is the Hardy space $H^p$. Indeed, on one hand, we have $H^p(\R^2)\cong L^p(\R^2)$ for $p>1$ and $H^p(\R^2)\subset L^1(\R^2)$ for $p=1$. On the other hand, $H^p$ embeds into $\mathcal{D}'$ for all $p \in (0, \infty)$ and, finally, functions in $H^p$ scale like $L^p$ (also for $p<1$), in the sense that
 \begin{align}\label{scaling hp}
 \|\nabla^l \varphi(\mu\cdot)\|_{H^p} = \mu^{l-\frac{2}{p}}\| \nabla^l \varphi\|_{L^p(\R^2)}
 \end{align}
for $\varphi\in C^\infty_c(\R^2)$ and any $0<p<\infty$, so that one can hope to have  a sequence of building blocks which have $L^2$ norm of order $1$ (as in \eqref{eq_l2_norm_w}) and, at the same time, having vorticity with $H^p$ norm arbitrarily small, if $p<1$.

\subsection{Technical novelties}
\label{ssec_technical_nov}

We briefly explain now the two main technical novelties of this paper compared to previous works on convex integration. They concern
\begin{enumerate}
\item how elements in Hardy spaces can be estimated and, in particular, how to exploit the scaling properties \eqref{scaling hp} in Hardy spaces;
\item how to do the construction on the full space, where also decay at $\infty$ has to be taken into account.
\end{enumerate}

\subsubsection{Concentration in Hardy spaces}
As we mentioned before, in order to control the quantity $\|\curl w\|_{H^p}$, we use the mechanism of \emph{concentration} or \emph{intermittency} that was also used in \cite{brue2021nonuniqueness} for the control of the norm in $L^{1,\infty}$.
The building blocks are defined via concentrated functions,
\begin{align*}
W := W_\mu :=\varphi_\mu(x) \xi
\end{align*}
where $\varphi\in C^\infty_c(\R^2)$, $\varphi_\mu$ is the periodization of the concentrated function $\mu\varphi(\mu x)$ and $\xi\in\R^2$ is some given direction. The scaling is such that we keep \eqref{eq_l2_norm_w}, i.e.
$\|W_\mu\|_{L^2} \,\raisebox{-0.9ex}{\~{}}\, 1$.  The main problem in exploiting concentration in the framework of Hardy spaces (with $p<1$) is that there is no Hölder inequality available: in general
\begin{align*}
\|a f\|_{H^p} \nleqslant \|a\|_{L^\infty} \|f\|_{H^p}.
\end{align*}
Hence the estimate for $\|\curl w\|_{H^p}$ is more subtle and we cannot use \eqref{scaling hp} directly. 

To deal with this issue, one could use the definition of Hardy norm (see \eqref{eq_def_hardy_norm}), but this turns out to be extremely difficult. We use therefore the notion of \emph{atoms}, which are typical functions $f$ in Hardy space that have support in a ball $B$ and satisfy the cancellation property $\int_B f \,\dx = 0$ and an $L^\infty$ estimate, see Definition \ref{dfn: hardy atoms}. Indeed, thanks to the intermittency, one can view the perturbation $w(x) = \chi_{\kappa_0}(x) a(x) W(x)$ as a finite sum of functions, each of them supported on a very small ball of radius $\frac{1}{\mu}$, i.e.
\begin{align*}
w &= \sum \theta_j,\\
\theta_j &=  \mathbb{1}_{B_{\frac{1}{\mu}}(x_j)} w
\end{align*}
for some $x_1, \dots, x_n$. The curl of each $\theta_j$ satisfies the cancellation property $\int_{B_{\frac{1}{\mu}}(x_j)} \curl \theta_j \,\dx = 0$ as a derivative of a compactly supported function. Therefore, $\curl w$ is a linear combination of atoms and thus $\curl w\in H^p$. One can use a standard estimate for atoms (see Lemma \ref{lemma: hardy atoms}) on each $\theta_j$, balancing $\|\theta_j\|_{L^\infty}$ (estimated by \eqref{scaling hp}) and the size of its support.

\subsubsection{Full spaces vs periodic domain}

Since we are constructing solutions in $L^2(\R^2)$ and not in $L^2(\T^d)$, we need to implement a convex integration scheme that differs from previous ones in at least two more ways:
\begin{enumerate}[(i)]
\item As fast oscillating perturbations are used to reduce the error, $\tor$ is the natural habitat for solutions constructed by convex integration schemes. We want to keep the advantages from using fast oscillations, while also ensuring the decay at infinity.
\item On a more technical side, there is no bounded right inverse $\dv^{-1}:L^1(\R^2;\R^2)\rightarrow L^1(\R^2;\mathcal{S})$ (here $\mathcal{S}$ is the space of symmetric matrices) for the divergence. In order to reduce $R_0$, it is crucial to construct an antidivergence  for functions of the form $f u_\lambda$ with $f\in C_c^\infty(\R^2), u\in C^\infty_0(\tor)$ that takes advantage of the oscillation with an estimate of the form $\|\dv^{-1} (fu_\lambda)\|_{L^1} \approx \frac{1}{\lambda} \|fu\|_{L^1}$.
\end{enumerate}
We deal with (i) by using that if $R_0\in L^1(\R^2)$
\begin{align*}
\lim_{\kappa\rightarrow\infty} \|R_0\|_{L^1(\R^2\setminus\ B_\kappa)} = 0
\end{align*}
and reduce the error only on a compact set $B_{\kappa_0}$ such that $\|R_0\|_{L^1(\R^2\setminus B_{\kappa_0})}\ll 1,$ using a cutoff $\chi_{\kappa_0}$ in our perturbations
\begin{align*}
w(t,x) &= \chi_{\kappa_0}(x) a(t,x) W_\lambda(x).
\end{align*}
Therefore, the support of $w$ consists of a (possibly very large) finite number (which is of order $\kappa_0^2$) of periodic boxes of the form $[0,1]^2+k$ for some $k\in\Z^2$ that is fixed at the start of each iteration. This allows us to have similar estimates as for periodic functions on $\tor$ with a factor depending on $\kappa_0,$ while also having perturbations in $L^2(\R^2).$

Concerning (ii), we gain the factor $\frac{1}{\lambda}$ by using integration by parts: On $\tor$, we have the bounded (in $L^1$) operator $\dv^{-1}: C^\infty_0(\tor;\R^2)\rightarrow C^\infty_0(\tor;\mathcal{S})$ that satisfies $\|\dv^{-1} u_\lambda\|_{L^1(\tor)} \leq \frac{C}{\lambda}\|u\|_{L^1(\tor)}$ (see Lemma \ref{standard antidivergence} below or also, for instance, \cite[Proposition 4]{burczak-mod-sze21}). Defining
\begin{align*}
R_1(f,u_\lambda) = f \dv^{-1}u_\lambda,
\end{align*}
we have $\|R_1\|_{L^1}\leq \frac{C(\supp f)}{\lambda} \|f\|_{C(\R^2)} \|u\|_{L^1(\tor)}$ and this matrix satisfies
\begin{align*}
\dv R_1 = fu_\lambda + (\dv^{-1}u_\lambda)\cdot\nabla f .
\end{align*}
Since $\div^{-1}$ is not bounded from $L^1(\R^2) \to L^1(\R^2)$, we can not write the last term as a divergence of a tensor field whose $L^1$ norm is bounded by the $L^1$ norm of $(\dv^{-1}u_\lambda)\cdot \nabla f$. Hence we simply set
\begin{align*}
r_1 = -(\dv^{-1}u_\lambda)\cdot\nabla f
\end{align*}
so that 
\begin{align}
\label{eq_intro_antidiv}
r_1 + \dv R_1 = fu_\lambda \text{ and } \|R_1\|_{L^1},  \|r_1\|_{L^1} \lesssim \frac{1}{\lambda}.
\end{align}
We therefore work with approximate solutions that satisfy
\begin{align*}
\partial_t u_{n-1} + \dv(u_{n-1}\otimes u_{n-1}) + \nabla p_{n-1}= - r_{n-1} -  \dv R_{n-1}
\end{align*}
instead of \eqref{eq: error previous}. In order to cancel this additional error term, we include in our definition of $u_n$ a corrector of the form
\begin{align*}
v(x,t) = \int_0^t r_{n-1}(x,s)\,\ds
\end{align*}
such that $\partial_t v - r_{n-1} = 0.$ Since in this way $r_{n-1}$ enters into the definition of the perturbation through $v$, we have to make sure to control $\|\curl \int_0^t r_{n-1}(x,s)\,\ds\|_{H^p}$. We do this by carrying out the ``integration by parts'' $N$ times, yielding $(r_N,R_N)$ with
\begin{align*}
r_N + \div R_N = f u_\lambda  \text{ and } \|\nabla r_N\|_{L^\infty} \lesssim \frac{1}{\lambda^{N-1}}
\end{align*} 
instead of \eqref{eq_intro_antidiv}. We then make sure that $r_N$ has compact support, so that we can again use the standard estimate for atoms mentioned above ($L^\infty$ bound together with a bound on the size of the support) .

\subsection{An explicit example comparing distributional and Lorentz space convergence}
\label{ssec_counterexample}

We conclude this introduction with an example of a sequence $(f_n)_n$ of $1D$ piecewise constant maps (but similar constructions can be done with smooth maps) converging to \emph{different limits} in $L^{1, \infty}$ and in $\mathcal{D}'$.  In particular, $f_n \to \delta_0$ in distributions, whereas $f_n \to 0$ in $L^{1, \infty}$. 
Set
\begin{align*}
f_n = \frac{1}{n}\sum_{j=0}^{n-1} 2^{n+j} \mathbb{1}_{[2^{-(n+j)}, 2^{-(n+j-1)}]}
\end{align*}  
Then $\int_\R f_n \,\dx = 1$ and it is not difficult to see that
\begin{align*}
f_n\rightarrow \delta_0 \text{ in } \mathcal{D}'(\R).
\end{align*}
On the other hand, it holds
\begin{align*}
\left|\lbrace |n f_n|\geq t\rbrace\right| = \begin{cases} 2^{-n},   &t\in (0,2^{n}],\\
2^{-(n+1)}, &t\in (2^{n}, 2^{n+1}],\\
\vdots\\
2^{2n-1}, &t\in (2^{2n-2} ,2^{2n-1}],\\
0, & t> 2^{2n-1}.
\end{cases}
\end{align*}
This yields
\begin{align*}
\sup_{t>0} t\left|\lbrace |n f_n|\geq t\rbrace\right| \leq 1,
\end{align*}
and therefore 
\begin{align*}
\|f_n\|_{L^{1,\infty}(\R)} \rightarrow 0 \text{ for } n\rightarrow\infty.
\end{align*}

\subsection{Notation}

We fix some notation we are going to use in the paper.
\begin{itemize}
\item We denote by $e_1, e_2$ the standard basis vectors of $\R^2$.
\item For any vector $\xi=(\xi_1,\xi_2)\in\R^2$, we will denote by $\xi^\perp$ the orthogonal vector $\xi^\perp = (\xi_2, - \xi_1)$.
\item We denote by $\mathcal{S}$ the set of symmetric $2\times 2$ matrices.
\item For a quadratic $2\times 2$ matrix $T$, we denote by $\overset{\circ}{T} = T-\frac{1}{2} \operatorname{tr}T \operatorname{Id}$ its traceless part.
\item For a function $f\in C^1(\R^2)$ we denote by $\nabla^\perp f = (\partial_2 f,-\partial_1 f)$ its orthogonal gradient.
\item For $d_1,d_2\in\N$ we write $f:\mathbb{T}^{d_1}\rightarrow\R^{d_2}$ for a function $f:\R^{d_1}\rightarrow\R^{d_2}$ defined on the full space that is periodic with period $1$ in all variables, i.e. $f(x+l e_k) = f(x)$ for all $k=1,\dots, d_1,$ $l\in\Z$.
\item For a periodic function $f$ as above, we denote $\int_{\mathbb{T}^{d_1}} f \,\dx = \int_{[0,1]^{d_1}} f\,\dx$, i.e. the integral over just one periodic box.
\item $C_0^\infty(\tor;\R^d)=\lbrace f:\tor\rightarrow\R^d \text{ smooth}, \int_{\tor} f\,\dx = 0\rbrace$ is the space of smooth periodic functions on $\R^2$ with zero mean value on one periodic box.
\item For a function $g\in C^\infty(\tor)$ and $\lambda\in\mathbb{N},$ we denote by $g_\lambda:\tor\rightarrow\mathbb{R}$ the $\frac{1}{\lambda}$ periodic function
$$g_\lambda (x) := g(\lambda x).$$
Notice that for every $l\in\mathbb{N}$, $s\in [1,\infty]$
$$\|D^l g_\lambda\|_{L^s(\tor)} = \lambda^l \|D^l g\|_{L^s(\tor)}.$$
\item $H^p(\R^2)$ is the real Hardy space, see Definition \ref{def: hardy space}.
\item $L^2_\sigma(\R^2) = \left\lbrace f\in L^2(\R^2): \dv f = 0 \text{ in distributions}\right\rbrace$ is the space of divergence-free vector fields in $L^2(\R^2)$.
\item For a function $f:[0,1]\times\R^2\rightarrow\R^d$ and $s\in[1,\infty]$, we write $\|\cdot\|_{C_tL^s_x}$ for the norm $\|f\|_{C_tL^s_x} = \max_{t\in[0,1]}\|f(t)\|_{L^s(\R^2)}$.
\item For any function $\varphi:\R\rightarrow\R$ with $\supp(\varphi)\subset (-\frac{1}{2},\frac{1}{2})$ and $\mu>1$ we write $\varphi_\mu$ for the periodic extension of the function $\mu^\frac{1}{2} \varphi(\mu \left(x-\frac{1}{2}\right))$, whose support is contained in intervalls of length $\frac{1}{\mu}$ centered around the points $\frac{1}{2}+\Z$. Note that
\begin{align}\label{eq: scaling}
\|\varphi_\mu\|_{L^r(\mathbb{T})}=\mu^{\frac{1}{2} - \frac{1}{r}}\|\varphi\|_{L^r(\R)}
\end{align}
 and in particular $\|\varphi_\mu\|_{L^2(\mathbb{T})}=\|\varphi\|_{L^2(\R)}$.
\item Let $\lambda\in\N$, $f:\tor\rightarrow\R^d$. We will sometimes write $f_\lambda$ for the oscillating functions $f_\lambda(x) = f(\lambda x)$. On the other hand, for $f:\R\rightarrow\R$ with compact support, we will oftentimes write $f_\mu$ for its concentrated version. To avoid confusion, we will only use the parameter $\lambda$ for oscillations and $\mu$ (or $\mu_1,\mu_2$, respectively) for concentration.
\end{itemize}

\section{Preliminaries}
We now provide the technicals tools that are needed for the proof of the Main Theorem \ref{thm:main} and we start this section with two useful estimates for functions of the form  $fg_\lambda$, where \mbox{$f\in C_c^\infty(\R^2),$}  $g\in C^\infty(\tor).$ For these estimates it is crucial that $f$ is compactly supported. Note also that the size of $\supp f$ enters the estimate.
\begin{prop}[Improved Hölder]\label{prop: improved hoelder}
Let  $k,\lambda\in\mathbb{N},$ $f:\R^2\rightarrow\R$ smooth with $\supp f\subset [-k,k]^2$ and $g:\mathbb{T}^2\rightarrow\R$ smooth. Then it holds for all $s\in [1,\infty]$
\begin{align*}
\|fg_\lambda\|_{L^s(\R^2)}\leq \|f\|_{L^s(\R^2)}\|g\|_{L^s(\mathbb{T}^2)} + \frac{C(s)(2k)^\frac{2}{s}}{\lambda^\frac{1}{s}}\|f\|_{C^1(\R^2)}\|g\|_{L^s(\mathbb{T}^2)}.
\end{align*}
\end{prop}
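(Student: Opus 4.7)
The plan is to exploit the fact that on a box of side $1/\lambda$ aligned with the period lattice of $g_\lambda$, the function $g_\lambda$ is a rescaled copy of $g$ over the full torus, while $f$ is almost constant. Since $\lambda, k \in \N$, we may tile $[-k,k]^2$ by exactly $(2k\lambda)^2$ half-open cubes $\{Q_j\}_{j \in J}$ of side $1/\lambda$, each of them a translate of $[0,1/\lambda)^2$. For each $j \in J$, pick a center $x_j \in Q_j$. The case $s=\infty$ is immediate from the usual Hölder estimate, so throughout the sketch assume $s \in [1,\infty)$.

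First I would record the two basic facts. By periodicity, the change of variables $y = \lambda x$ gives $\|g_\lambda\|_{L^s(Q_j)}^s = \lambda^{-2}\|g\|_{L^s(\mathbb{T}^2)}^s$ for every $j \in J$. By the mean value theorem, $\|f - f(x_j)\|_{L^\infty(Q_j)} \le \sqrt{2}\,\lambda^{-1}\|\nabla f\|_{L^\infty}$. Combining these via the triangle inequality on each cube yields
\begin{equation*}
\|fg_\lambda\|_{L^s(Q_j)} \le |f(x_j)|\,\lambda^{-2/s}\|g\|_{L^s(\mathbb{T}^2)} + \sqrt{2}\,\lambda^{-1}\|\nabla f\|_{L^\infty}\,\lambda^{-2/s}\|g\|_{L^s(\mathbb{T}^2)}.
\end{equation*}
Raising to the $s$-th power, summing over $j \in J$, and applying the discrete Minkowski inequality to the two resulting terms gives
\begin{equation*}
\|fg_\lambda\|_{L^s(\R^2)} \le \|g\|_{L^s(\mathbb{T}^2)}\left(\sum_{j \in J} |f(x_j)|^s \lambda^{-2}\right)^{1/s} + \|g\|_{L^s(\mathbb{T}^2)}\sqrt{2}\,\lambda^{-1}\|\nabla f\|_{L^\infty}\,(2k)^{2/s},
\end{equation*}
since $|J|\lambda^{-2} = (2k)^2$. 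The second summand is already of the desired form (even stronger, since $\lambda^{-1}\le \lambda^{-1/s}$).

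The main point is to estimate the first summand. This is a Riemann sum for $\|f\|_{L^s(\R^2)}^s$: using $||a|^s-|b|^s|\le s(|a|+|b|)^{s-1}|a-b|$ with $a=f(x)$, $b=f(x_j)$ on each $Q_j$, and then $|f(x)-f(x_j)| \le \sqrt{2}\lambda^{-1}\|\nabla f\|_{L^\infty}$, one obtains
\begin{equation*}
\left|\sum_{j \in J} |f(x_j)|^s \lambda^{-2} - \|f\|_{L^s(\R^2)}^s\right| \le C(s)\,\|f\|_{C^1(\R^2)}^s\,(2k)^2\,\lambda^{-1}.
\end{equation*}
Applying the elementary subadditivity $(a+b)^{1/s}\le a^{1/s}+b^{1/s}$ valid for $a,b\ge 0$ and $s\ge 1$ converts this error on the $s$-th power into an additive error of size $C(s)\,\|f\|_{C^1}\,(2k)^{2/s}\lambda^{-1/s}$ on the $L^s$ norm, producing exactly the second term in the statement. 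Absorbing the contribution of the previous paragraph into the same constant $C(s)$ completes the argument.

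The only delicate point is the conversion of the Riemann sum error into the claimed $\lambda^{-1/s}$ rate (rather than the $\lambda^{-1}$ that one naively gets); this is the reason the $(2k)^{2/s}/\lambda^{1/s}$ factor appears instead of the stronger $(2k)^2/\lambda$. Everything else is bookkeeping made possible by the alignment of the tiling with the period lattice of $g_\lambda$.
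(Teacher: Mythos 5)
Your proof is correct, and it is essentially the standard Riemann-sum argument that the paper delegates to (Lemma~2.1 of Modena--Sz\'ekelyhidi), adapted to a compactly supported $f$ on $\R^2$ by tiling $[-k,k]^2$ into $(2k\lambda)^2$ period cubes. The only cosmetic difference is that you split $\|fg_\lambda\|_{L^s(Q_j)}$ by the triangle inequality before summing, whereas the reference works directly with the $s$-th power $\int |f|^s|g_\lambda|^s$; both routes reduce to the same Riemann-sum error estimate and the subadditivity $(a+b)^{1/s}\le a^{1/s}+b^{1/s}$.
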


\begin{proof}
This is an adaptation of Lemma 2.1 in \cite{modena2018non}, which can be proven in the same way.
\end{proof}

\begin{lemma}\label{lemma: fast oscillations}
Let  $k,\lambda\in\mathbb{N},$ $f:\R^2\rightarrow\R$ smooth with $\supp f\subset [-k,k]^2$ and $g:\mathbb{T}^2\rightarrow\R$ smooth with $\int_{\tor} g \,\dx = 0$. Then
\begin{align*}
\left|\int_{[-k,k]^2} f(x) g_\lambda(x)\,\dx\right|\leq \frac{4\sqrt{2}k^2\|f\|_{C^1(\R^2)} \|g\|_{L^1(\tor)}}{\lambda}.
\end{align*}
\end{lemma}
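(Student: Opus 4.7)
The plan is to exploit the $1/\lambda$-periodicity of $g_\lambda$ by partitioning the cube $[-k,k]^2$ into a grid of small cells of side $1/\lambda$ (there are $(2k\lambda)^2$ such cells, since $k,\lambda\in\mathbb{N}$), and on each cell approximate $f$ by a constant, using the mean-zero assumption on $g$ to kill that constant.

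More concretely, let $\{Q_j\}$ denote the axis-aligned cubes of side $1/\lambda$ that tile $[-k,k]^2$, and fix a point $x_j \in Q_j$. On each $Q_j$, the $1/\lambda$-periodicity of $g_\lambda$ combined with the change of variables $y=\lambda x$ gives
\begin{equation*}
\int_{Q_j} g_\lambda(x)\,\dx = \frac{1}{\lambda^2}\int_{\tor} g(y)\,\dy = 0, \qquad \int_{Q_j} |g_\lambda(x)|\,\dx = \frac{\|g\|_{L^1(\tor)}}{\lambda^2}.
\end{equation*}
The first identity allows me to subtract $f(x_j)$ inside the integral: $\int_{Q_j} f g_\lambda\,\dx = \int_{Q_j}(f(x)-f(x_j))g_\lambda(x)\,\dx$. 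Since $\mathrm{diam}(Q_j)=\sqrt{2}/\lambda$, I bound $|f(x)-f(x_j)|\leq \sqrt{2}\lambda^{-1}\|\nabla f\|_{L^\infty}$ on $Q_j$.

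Summing these cell estimates and using $\|\nabla f\|_{L^\infty}\leq \|f\|_{C^1(\R^2)}$ yields
\begin{equation*}
\left|\int_{[-k,k]^2} f g_\lambda\,\dx\right| \leq \sum_j \frac{\sqrt{2}}{\lambda}\|f\|_{C^1(\R^2)}\cdot \frac{\|g\|_{L^1(\tor)}}{\lambda^2} = (2k\lambda)^2 \cdot \frac{\sqrt{2}\|f\|_{C^1(\R^2)}\|g\|_{L^1(\tor)}}{\lambda^3} = \frac{4\sqrt{2}k^2\|f\|_{C^1(\R^2)}\|g\|_{L^1(\tor)}}{\lambda},
\end{equation*}
which is the claimed inequality. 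There is no real obstacle: the only point to verify carefully is that the tiling is exact (guaranteed by the integrality of $k$ and $\lambda$), and that the supp condition $\supp f\subset [-k,k]^2$ lets me honestly restrict to that cube without boundary issues.
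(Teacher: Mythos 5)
Your proof is correct and is essentially the argument the paper invokes: the paper simply cites Lemma~2.6 of \cite{modena2018non}, whose proof is exactly the partition of the domain into $(2k\lambda)^2$ cubes of side $1/\lambda$, using the mean-zero property of $g_\lambda$ on each cell to subtract a constant and then the Lipschitz bound on $f$. The arithmetic ($(2k\lambda)^2\cdot \sqrt{2}\lambda^{-1}\cdot \lambda^{-2} = 4\sqrt{2}k^2/\lambda$) and the change-of-variables identities on each cell are all correct, and the integrality of $k,\lambda$ does ensure the tiling is exact.
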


\begin{proof}
This is an adaptation of Lemma 2.6 in \cite{modena2018non} with the same proof.
\end{proof}

\begin{dfn}[Hardy spaces on $\R^2$]\label{def: hardy space}
 Let $\Psi\in \mathcal{S}(\R^2)$ be a Schwartz function with $\int_{\R^2}\Psi(x)\,\dx \neq 0$ and let $\Psi_\varepsilon(x) = \frac{1}{\varepsilon^2}\Psi(\frac{x}{\varepsilon})$. For any $f\in \mathcal{S}'(\R^2),$ we define the  radial maximal function
\begin{align}
\label{eq_def_hardy_norm}
m_\Psi f(x) = \sup_{\zeta>0}|f\ast\Psi_\zeta(x)|.
\end{align}
Let $0<p<\infty$. The real Hardy space $H^p(\R^2)$ is defined as the space of tempered distributions
\begin{align*}
H^p(\R^2) = \left\lbrace f\in \mathcal{S}'(\R^2) : m_\Psi f \in L^p(\R^2)\right\rbrace
\end{align*}
and we write
\begin{align*}
\|f\|_{H^p(\R^2)} = \|m_\Psi f\|_{L^p(\R^2)}.
\end{align*}
Note that $\|\cdot\|_{H^p(\R^2)}$ is only a quasinorm. 
The definition of $H^p(\R^2)$ does not depend on the choice of the function $\Psi$ and the quasinorms are equivalent. For $p>1$, the space $H^p(\R^2)$ coincides with the Lebesgue space $L^p(\R^2)$. For $p\leq 1$, $H^p(\R^2)$ is a complete metric space with the metric given by $d(f,g)=\|f-g\|^p_{H^p(\R^2)}$ and the inclusion $H^p(\R^2)\hookrightarrow \mathcal{S}'(\R^2)$ is continuous.
\end{dfn}

\begin{dfn}[Hardy space atoms]\label{dfn: hardy atoms}
For $p\leq 1$, a Hardy space atom is a measurable function $a$ with the following properties:
\begin{enumerate}[(i)]
\item $\supp a\subset B$ for some ball $B$,
\item $|a|\leq |B|^{-\frac{1}{p}}$
\item $\int_B x^\beta a(x)\, \dx = 0$ for all multiindices $\beta$ with $|\beta|\leq 2(p^{-1}-1)$.
\end{enumerate}
\end{dfn}

\begin{lemma}[Estimate for Hardy space atoms]\label{lemma: hardy atoms}
There is a uniform constant $C$ such that for all atoms $a$ it holds
\begin{align*}
\|a\|_{H^p(\R^2)}\leq C.
\end{align*}
\end{lemma}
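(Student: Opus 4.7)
The plan is to bound $\|m_\Psi a\|_{L^p(\R^2)}$ by splitting $\R^2$ into a ball concentric with $\supp a$ and its complement, using a direct $L^\infty$ bound on the near region and exploiting the vanishing moment condition via a Taylor expansion on the far region. First, by a scaling and translation argument, the claim reduces to the case where the supporting ball is $B_1(0)$: if $a$ is an atom on $B_r(x_0)$, then $\tilde a(y):=r^{2/p}a(x_0+ry)$ is easily checked to be an atom on $B_1(0)$, and a change of variables in the definition of $m_\Psi$ together with the dilation property $\Psi_\zeta((\cdot)/r)=r^2\Psi_{r\zeta}$ shows $\|\tilde a\|_{H^p}=\|a\|_{H^p}$. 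So it suffices to treat atoms supported in the unit ball centered at the origin, with $|a|\leq C_p$ and moments of order up to $N:=\lfloor 2(p^{-1}-1)\rfloor$ vanishing.

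On the dilated ball $2B=B_2(0)$, Young's inequality yields for every $\zeta>0$
\begin{equation*}
|\Psi_\zeta\ast a(x)|\leq\|\Psi_\zeta\|_{L^1(\R^2)}\|a\|_{L^\infty}=\|\Psi\|_{L^1(\R^2)}\|a\|_{L^\infty}\leq C,
\end{equation*}
so $m_\Psi a\leq C$ on $2B$ and $\int_{2B}(m_\Psi a)^p\,\dx\leq C'$. This near part is essentially automatic.

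The delicate estimate is on the complement: for $|x|\geq 2$ and $\zeta>0$, I exploit Definition \ref{dfn: hardy atoms}(iii) by subtracting the Taylor polynomial $Q_{x,\zeta}(y)$ of $y\mapsto\Psi_\zeta(x-y)$ at $y=0$ of degree $N$, obtaining
\begin{equation*}
\Psi_\zeta\ast a(x)=\int_{B_1(0)}a(y)\bigl[\Psi_\zeta(x-y)-Q_{x,\zeta}(y)\bigr]\,\dy.
\end{equation*}
The remainder is controlled by
\begin{equation*}
|\Psi_\zeta(x-y)-Q_{x,\zeta}(y)|\leq C|y|^{N+1}\sup_{|\eta|\leq|y|}|\nabla^{N+1}\Psi_\zeta(x-\eta)|,
\end{equation*}
and Schwartz decay $|\nabla^{N+1}\Psi(w)|\leq C_M(1+|w|)^{-M}$ combined with a case split on whether $\zeta\lesssim|x|$ or $\zeta\gtrsim|x|$ gives, uniformly in $\zeta$,
\begin{equation*}
|\Psi_\zeta\ast a(x)|\lesssim|x|^{-(N+3)},\qquad|x|\geq 2.
\end{equation*}
Taking the supremum in $\zeta$ and raising to the $p$-th power,
\begin{equation*}
\int_{\R^2\setminus 2B}(m_\Psi a)^p\,\dx\lesssim\int_{|x|\geq 2}|x|^{-p(N+3)}\,\dx<\infty,
\end{equation*}
since $p(N+3)\geq p(2/p-2+3)=2+p>2$.

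The main obstacle I anticipate is precisely the uniform-in-$\zeta$ bound on the Taylor remainder: the regimes of highly concentrated $\Psi_\zeta$ (small $\zeta$) and very spread-out $\Psi_\zeta$ (large $\zeta$) must be handled separately, using the Schwartz decay with a sufficiently large $M$ in each. The argument is sharp in the sense that each vanishing moment buys exactly one additional power of decay, and the condition $N\geq 2(p^{-1}-1)$ imposed in Definition \ref{dfn: hardy atoms}(iii) is exactly the threshold needed to make $\int_{|x|\geq 2}|x|^{-p(N+3)}\,\dx$ converge in $\R^2$.
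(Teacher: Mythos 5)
The paper itself gives no proof for this lemma; it cites Stein's \emph{Harmonic Analysis}, Chapter III.2, so there is no paper argument to compare against. Your proof is a correct and complete reproduction of the classical argument from that reference: rescale to a unit-ball atom, bound the maximal function pointwise by $\|\Psi\|_{L^1}\|a\|_{L^\infty}$ on the doubled ball, and on the far region subtract the degree-$N$ Taylor polynomial of $\Psi_\zeta(x-\cdot)$ at the origin (legitimate because the moments vanish), estimate the remainder via Schwartz decay with the two-regime case split in $\zeta$, and integrate the resulting $|x|^{-(N+3)}$ decay raised to the $p$-th power. The scaling identity $\Psi_\zeta((\cdot)/r)=r^2\Psi_{r\zeta}$ and the resulting $H^p$-invariance under dilation both check out, and the near-part bound is indeed immediate.

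One small slip at the very end: you justify convergence of $\int_{|x|\geq 2}|x|^{-p(N+3)}\,\dx$ by writing $p(N+3)\geq p(2/p-2+3)=2+p$, which implicitly uses $N\geq 2(p^{-1}-1)$. But with $N=\lfloor 2(p^{-1}-1)\rfloor$ this inequality fails whenever $2(p^{-1}-1)\notin\Z$, and in the paper's range $2/3<p<1$ one has $N=0$ while $2(p^{-1}-1)\in(0,1)$. The conclusion is still correct because $\lfloor t\rfloor>t-1$ always gives $N>2(p^{-1}-1)-1$, hence $p(N+3)>2$; or more simply, for $2/3<p<1$ one has $N=0$ and $p(N+3)=3p>2$ directly. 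This is worth fixing but does not affect the soundness of the argument. It also makes transparent that the lower bound $p>2/3$ is exactly the threshold at which the tail integral converges, which is the source of the constraint in Theorem \ref{thm:main}.
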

\begin{proof}
We refer to \cite{stein1993harmonic}, see 2.2 in Chapter III.2.
\end{proof}

\begin{rem}\label{rem: hardy atoms}
\begin{enumerate}
\item We will use that for a function $f$ satisfying $(iii)$ in Definition \ref{dfn: hardy atoms} with support in a ball $B$, we have by \mbox{Lemma \ref{lemma: hardy atoms}}
\begin{align*}
\|f\|_{H^p(\R^2)} \leq C |B|^\frac{1}{p}\|f\|_{L^\infty(\R^2)}.
\end{align*}
\item Since $\frac{2}{3}<p<1$ in our case, we only need to check the $0$th momentum in $(iii)$, i.e. $\int_B a(x)\, \dx = 0$.
\end{enumerate}
\end{rem}

\begin{lemma}[Standard antidivergence]\label{standard antidivergence}
There exists a linear operator $$\dv^{-1}: C^\infty_0(\tor;\R^2)\rightarrow C^\infty_0(\tor;\mathcal{S})$$ such that $\dv \dv^{-1} u =u$ and
\begin{align*}
\|\nabla^l\dv^{-1} u\|_{L^s(\tor)}&\leq C(s)\|\nabla^lu\|_{L^s(\tor)} ,\\
\|\nabla^l\dv^{-1} u_\lambda\|_{L^s(\tor)}&\leq \frac{C(s)}{\lambda^{1-l}}\|\nabla^l u\|_{L^s(\tor)}\text{ for all } l,\lambda\in\N,s\in [1,\infty].
\end{align*}
\end{lemma}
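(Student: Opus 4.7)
The plan is to construct $\dv^{-1}$ explicitly via Fourier series on $\tor$. For $u\in C^\infty_0(\tor;\R^2)$, let $\phi:=\Delta^{-1}u$ be the unique mean-zero smooth vector field solving $\Delta\phi = u$, which exists because $u$ has zero mean and hence only nonzero Fourier modes. Define the symmetric tensor
\[
(\dv^{-1}u)_{ij} := \partial_i\phi_j + \partial_j\phi_i - \delta_{ij}\,\dv\phi.
\]
Symmetry in $(i,j)$ is evident, mean-zero-ness of each entry is automatic since it is a derivative of a periodic function, and a direct calculation gives $\partial_j(\dv^{-1}u)_{ij} = \partial_i\dv\phi + \Delta\phi_i - \partial_i\dv\phi = u_i$, so $\dv\dv^{-1}u = u$ and the operator indeed maps into $C^\infty_0(\tor;\mathcal{S})$.

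For the first bound I would move all $l$ derivatives off $\dv^{-1}$ and onto $u$, using that derivatives commute with $\Delta^{-1}$ on the torus, so that each component of $\nabla^l\dv^{-1}u$ becomes a linear combination of expressions of the form $\partial_k\Delta^{-1}(\nabla^l u)$. On the subspace of mean-zero functions, $\partial_k\Delta^{-1}$ is a smoothing operator of order $-1$, given by convolution with (a derivative of) the torus Green's function. The local singularity of this kernel matches that of the $\R^2$ kernel $|x|^{-1}$, which is integrable in dimension two, while the tail on $\tor$ is smooth. Hence the kernel lies in $L^1(\tor)$ and Young's convolution inequality provides the uniform $L^s\to L^s$ estimate for every $s\in[1,\infty]$, yielding $\|\nabla^l\dv^{-1}u\|_{L^s(\tor)}\le C(s)\|\nabla^l u\|_{L^s(\tor)}$.

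The second estimate is a scaling computation. From $\Delta u_\lambda = \lambda^2(\Delta u)_\lambda$ one obtains $\Delta^{-1}u_\lambda(x) = \lambda^{-2}(\Delta^{-1}u)(\lambda x)$, hence $\dv^{-1}u_\lambda(x) = \lambda^{-1}(\dv^{-1}u)(\lambda x)$, so
\[
\nabla^l\dv^{-1}u_\lambda(x) = \lambda^{l-1}(\nabla^l\dv^{-1}u)(\lambda x).
\]
Since $\lambda\in\N$ and $\nabla^l\dv^{-1}u$ is $1$-periodic, its $L^s(\tor)$-norm is invariant under $x\mapsto\lambda x$, and combining with the first estimate produces the required factor $\lambda^{l-1}=\lambda^{-(1-l)}$.

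The main subtlety I expect is the endpoint cases $s=1$ and $s=\infty$: the second-order Riesz transforms $\partial_i\partial_j\Delta^{-1}$ are famously unbounded there, so it is essential that the rearrangement in step two leaves each instance of $\Delta^{-1}$ hit by at most \emph{one} derivative before being applied to $u$, keeping the operator in the genuinely smoothing class of order $-1$ whose kernel is integrable on $\tor$. With this care Young's inequality closes the estimate at the endpoints as well; if one instead sloppily collected all derivatives on $\Delta^{-1}$ one would be forced to restrict to $1<s<\infty$ and invoke Calderón--Zygmund theory. The estimate can also be cited directly from \cite[Proposition 4]{burczak-mod-sze21}, to which the paper already refers for this construction.
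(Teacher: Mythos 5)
The proposal is correct. Since the paper's own ``proof'' is a pointer to \cite[Proposition 4]{burczak-mod-sze21}, your argument is a self-contained reconstruction that, to my knowledge, matches the standard device used there: solve $\Delta\phi=u$ with $\phi$ mean-zero and set $\dv^{-1}u=\nabla\phi+(\nabla\phi)^T-(\dv\phi)\Id$, which is symmetric (and even traceless in $d=2$), mean-zero, and satisfies $\dv\dv^{-1}u=u$ by the computation you gave. The key point you correctly identify and handle is the endpoint boundedness: after commuting the $l$ derivatives past $\Delta^{-1}$ onto $u$, every remaining operator is $\partial_k\Delta^{-1}$, whose periodic kernel is locally of order $|x|^{-1}$ (integrable in $\R^2$) plus a smooth periodic correction, hence lies in $L^1(\tor)$, and Young's inequality gives the $L^s\to L^s$ bound for all $s\in[1,\infty]$ including the endpoints where second-order Riesz transforms would fail. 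The scaling identity $\dv^{-1}u_\lambda=\lambda^{-1}(\dv^{-1}u)(\lambda\,\cdot)$ together with $1$-periodicity under $x\mapsto\lambda x$ for $\lambda\in\N$ yields the second estimate exactly as you wrote. No gaps.
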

For the proof see Proposition 4 in \cite{burczak-mod-sze21}.

For $N\geq 2$ we inductively define
\begin{align*}
\dv^{-N} u = \sum_{k=1,2} \dv^{-1}\left(\dv^{N-1}u\cdot e_k\right).
\end{align*}

With that standard antidivergence operator, we will define an improved antidivergence operator for functions of the form $fu_\lambda$, $f\in C^\infty_c(\R^2)$, $u\in C^\infty_0(\tor;\R^2)$, on the full space. 
\begin{lemma}[Improved antidivergence operators]\label{antidivergence}
\begin{enumerate}[(i)]
\item For any $N\in\N$, there exists a bilinear operator $$S_N: C_c^\infty(\R^2;\R)\times C_0^\infty(\tor;\R^2)\rightarrow C_c^\infty(\R^2;\R^2)\times  C_c^\infty(\R^2;\mathcal{S})$$ such that for $S_N(f,u) = (r,R)$ it holds
\begin{align*}
r + \dv R = fu
\end{align*}
with
\begin{align*}
\|\nabla^l r\|_{L^\infty(\R^2)}&\leq C(\supp f)\|\nabla^l\dv^{-N} u\|_{L^\infty(\tor)} \|f\|_{C^{N+l}(\R^2)}\text{ for all } l\in\N,\\
\|R\|_{L^1(\R^2)} &\leq C(\supp f) \|\dv^{-1}u\|_{L^1(\tor)}\|f\|_{C^{N-1}(\R^2)}.
\end{align*}
\item For any $N\in\N$, there exists a bilinear operator $$\tilde{S}_N: C_c^\infty(\R^2;\R^2)\times C_0^\infty(\tor;\mathcal{S})\rightarrow C_c^\infty(\R^2;\R^2)\times  C_c^\infty(\R^2;\mathcal{S})$$ such that for $\tilde{S}_N(f,T) = (r,R)$ it holds
\begin{align*}
r + \dv R = Tf
\end{align*}
with
\begin{align*}
\|\nabla^l r\|_{L^\infty(\R^2)}&\leq C(\supp f)\|\nabla^l\dv^{-N} T\|_{L^\infty(\tor)} \|f\|_{C^{N+l}(\R^2)} \text{ for all } l\in\N,\\
\|R\|_{L^1(\R^2)} &\leq C(\supp f) \|\dv^{-1}T\|_{L^1(\tor)}\|f\|_{C^{N-1}(\R^2)}. 
\end{align*}
where, by a slight abuse of notation, we define
\begin{align*}
\dv^{-N} T= \sum_{k=1,2} \dv^{-N}(Te_k).
\end{align*}
\end{enumerate}
\end{lemma}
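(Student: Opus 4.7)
The plan is to prove both parts by induction on $N$, iterating the Leibniz identity
\[
\dv(fM) \;=\; f\,\dv M \,+\, M\nabla f
\]
(valid for scalar $f \in C_c^\infty(\R^2;\R)$ and $M \in C_0^\infty(\tor;\mathcal{S})$) together with the standard antidivergence $\dv^{-1}$ of Lemma \ref{standard antidivergence}. For the base case $N=1$ of part (i), take $M := \dv^{-1}u \in C_0^\infty(\tor;\mathcal{S})$, so that $\dv M = u$, and rearrange to
\[
fu \;=\; \dv(f\dv^{-1}u) \;-\; (\dv^{-1}u)\nabla f.
\]
Set $R := f\dv^{-1}u$ and $r := -(\dv^{-1}u)\nabla f$; both are $C_c^\infty$ by the compact support of $f$, and $R$ is symmetric. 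The bound $\|R\|_{L^1(\R^2)} \leq C(\supp f)\|f\|_\infty \|\dv^{-1}u\|_{L^1(\tor)}$ follows by summing the periodic $L^1$ norm over the $O(|\supp f|)$ periodic cells intersecting $\supp f$. The bound on $\|\nabla^l r\|_{L^\infty}$ follows from Leibniz applied to $(\dv^{-1}u)\nabla f$, using the elementary observation that for a periodic field $A$ with $\int_\tor A = 0$ and $\int_\tor \nabla^j A = 0$ (which holds automatically) one has $\|\nabla^k A\|_{L^\infty(\tor)} \leq C\|\nabla^l A\|_{L^\infty(\tor)}$ for every $k \leq l$ (by integrating along periodic curves). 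This collapses all mixed Leibniz terms into the single quantity on the right-hand side of the target estimate.

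For the inductive step, assume $S_{N-1}$ has been constructed with $S_{N-1}(f,u) = (r', R')$ satisfying $r' + \dv R' = fu$. By construction $r'$ is a finite sum of terms of the form $D^\alpha f \cdot V_\alpha$ with $|\alpha| = N-1$, where each $V_\alpha \in C_0^\infty(\tor; \R^2)$ arises from $(N-1)$ nested applications of $\dv^{-1}$ acting on the directional components of $u$. Apply $\dv^{-1}$ once more to each $V_\alpha$ to obtain $A_\alpha := \dv^{-1}V_\alpha \in C_0^\infty(\tor;\mathcal{S})$, and use Leibniz again:
\[
D^\alpha f\cdot V_\alpha \;=\; \dv\bigl(D^\alpha f \cdot A_\alpha\bigr) \;-\; A_\alpha\,\nabla(D^\alpha f).
\]
Setting $R := R' + \sum_\alpha D^\alpha f \cdot A_\alpha$ (still $C_c^\infty$ and symmetric-matrix-valued) and $r := -\sum_\alpha A_\alpha\,\nabla(D^\alpha f)$ yields the identity $r + \dv R = fu$, with each remainder now carrying derivatives $D^{\alpha+e_j}f$ of order $N$ and an extra application of $\dv^{-1}$ on the periodic side.

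The main obstacle is the bookkeeping for the two estimates. For $\|\nabla^l r\|_{L^\infty}$ one has to recognize that the aggregate periodic factor arising after $N$ iterations, being of the form $\dv^{-1}\bigl((\dv^{-1}\cdots (\dv^{-1}u)e_{k_1}\cdots)e_{k_{N-1}}\bigr)e_{k_N}$, is exactly captured (up to finitely many terms, bounded independently of $u$) by the recursion $\dv^{-N}u = \sum_k \dv^{-1}\bigl((\dv^{-(N-1)}u)e_k\bigr)$; one then invokes the Poincaré-type fact from the base case to reduce all Leibniz terms to $\|\nabla^l \dv^{-N}u\|_{L^\infty(\tor)}\|f\|_{C^{N+l}}$. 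For $\|R\|_{L^1}$, every piece added to $R$ along the iteration has the shape $(\text{derivative of }f)\cdot \dv^{-1}(\cdots)$, and the inner $\dv^{-1}(\cdots)$ is controlled in $L^1(\tor)$ by $\|\dv^{-1}u\|_{L^1(\tor)}$ via the $L^1\!\to\!L^1$ estimate of Lemma \ref{standard antidivergence} iterated on lower-order antidivergences; this leaves the factor $\|f\|_{C^{N-1}}$ and the geometric factor $C(\supp f)$ from converting $L^1(\tor)$ to $L^1(\R^2)$ on $\supp f$.

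Part (ii) follows by the same iteration applied component by component: the decomposition $Tf = \sum_k (Te_k)\,f_k$ reduces to a sum of scalar-times-vector problems handled by (i), and the convention $\dv^{-N}T := \sum_k \dv^{-N}(Te_k)$ is designed precisely so that the estimates for (i) assemble into the claimed bounds for (ii).
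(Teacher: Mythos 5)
Your proof takes essentially the same route as the paper: you build $(r_N,R_N)$ by iterating the Leibniz identity $fu=\dv(f\dv^{-1}u)-(\dv^{-1}u)\nabla f$, and upon unrolling your inductive step coincides exactly with the paper's recursion $r_N(f,u)=-\sum_k r_{N-1}(\partial_k f,\dv^{-1}u\cdot e_k)$, $R_N(f,u)=f\dv^{-1}u-\sum_k R_{N-1}(\partial_k f,\dv^{-1}u\cdot e_k)$, with part (ii) handled componentwise via $\tilde S_N(f,T)=\sum_k S_N(f_k,Te_k)$, just as in the paper. The paper dismisses the estimates as an immediate consequence of Lemma \ref{standard antidivergence}; your added Poincar\'e-type observation for mean-zero periodic fields is a reasonable way to spell out the step that collapses the lower-order Leibniz terms, but it is extra bookkeeping rather than a different argument.
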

\begin{proof}
Let us inductively define
\begin{align*}
r_0:C_c^\infty(\R^2;\R)\times C_0^\infty(\tor;\R^2)&\rightarrow C_c^\infty(\R^2;\R^2),\\
r_0(f,u)&= fu,\\
R_0:C_c^\infty(\R^2;\R)\times C_0^\infty(\tor;\R^2)&\rightarrow C_c^\infty(\R^2;\mathcal{S}),\\
R_0(f,u) &= 0
\end{align*}
and for $N\geq 1$
\begin{align*}
r_N:C_c^\infty(\R^2;\R)\times C_0^\infty(\tor;\R^2)&\rightarrow C_c^\infty(\R^2;\R^2),\\
r_N(f,u)&= -\sum_{k=1,2} r_{N-1}(\partial_kf,\dv^{-1}u\cdot e_k),\\
R_N:C_c^\infty(\R^2;\R)\times C_0^\infty(\tor;\R^2)&\rightarrow C_c^\infty(\R^2;\mathcal{S}),\\
R_N(f,u) &= f\dv^{-1}u -\sum_{k=1,2}R_{N-1}(\partial_kf,\dv^{-1}u\cdot e_k).
\end{align*}
It is clear that
\begin{align*}
r_0(f,u) + \dv R_0(f,u) = fu.
\end{align*}
Let us assume that
\begin{align*}
r_N(f,u) + \dv R_N(f,u) = fu 
\end{align*}
for some $N\in\N$ for all $f\in C_c^\infty(\R^2)$, $u\in C_0^\infty(\tor;\R^2)$.
Then we also have
\begin{align*}
r_{N+1}(f,u) + \dv R_{N+1}(f,u) &=\sum_{k=1,2} r_N(\partial_k f,\dv^{-1}u\cdot e_k)\\
&\hspace{0,3cm} +  \dv\left(f\dv^{-1}u - \sum_{k=1,2} R_N(\partial_k f, \dv^{-1}u \cdot e_k)\right)\\
&= fu + (\dv^{-1}u)\cdot \nabla f\\
&\hspace{0,3cm} -\sum_{k=1,2} r_N(\partial_k f,\dv^{-1}u\cdot e_k) - \dv\left(\sum_{k=1,2} R_N(\partial_k f, \dv^{-1}u \cdot e_k)\right)\\
&= fu + (\dv^{-1}u)\cdot \nabla f  - \sum_{k=1,2} \partial_k f \dv^{-1}u\cdot e_k = fu.
\end{align*}
Therefore, we set
\begin{align*}
S_N(f,u)=(r_N(f,u),R_N(f,u)).
\end{align*}
For the second operator, we simply set for $f\in C_c^\infty(\R^2;\R^2)$, $T\in C_0^\infty(\tor;\mathcal{S})$
\begin{align*}
\tilde{S}_N(f,T) = \sum_{k=1,2} S_N(f_k, Te_k).
\end{align*}
The estimates follow directly from the ones for $\dv^{-1}$ from Lemma \ref{standard antidivergence}.
\end{proof}

\begin{rem}\label{rem: improved antidiv}
In particular, if $(r_N,R_N)= S_N(f,u_\lambda)$, then
\begin{align*}
\|\nabla^l r_N\|_{L^\infty(\R^2)}&\leq \frac{C(\supp f)}{\lambda^{N-l}}\|\nabla^l u\|_{L^\infty(\tor)} \|f\|_{C^{N+l}(\R^2)}\text{ for all } l\in\N,\\
\|R_N\|_{L^1(\R^2)} &\leq \frac{C(\supp f)}{\lambda} \|u\|_{L^1(\tor)}\|f\|_{C^{N-1}(\R^2)}
\end{align*}
and the same holds for $\tilde{S}_N$.
\end{rem}

\begin{lemma}[A helpful computation]\label{lemma: div of matrix}
Let $f,g\in C^1(\R)$. For any vector $\xi\neq 0\in \R^2$ it holds
\begin{align*}
\dv\left( f(\xi\cdot x) g(\xi^\perp\cdot x) \frac{\xi}{|\xi|}\otimes  \frac{\xi}{|\xi|}\right) &= f'(\xi\cdot x) g(\xi^\perp\cdot x) \xi,\\
\dv\left(f(\xi\cdot x) g(\xi^\perp\cdot x) \frac{\xi^\perp}{|\xi|}\otimes \frac{\xi}{|\xi|} \right) &= f'(\xi\cdot x) g(\xi^\perp\cdot x)\xi^\perp.
\end{align*}
\end{lemma}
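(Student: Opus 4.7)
The statement is a direct chain-rule computation, and the plan is to isolate the single algebraic fact that does the work and then specialize it twice. The preliminary observation is that for \emph{constant} vectors $a,b\in\R^2$ and a scalar $h\in C^1(\R^2)$, one has
\[
\dv\bigl(h\, a\otimes b\bigr) \;=\; (b\cdot\nabla h)\, a,
\]
which follows component-wise from $\partial_j(h\,a_i b_j) = a_i\,(b\cdot\nabla h)$ since $a_i,b_j$ do not depend on $x$. This reduces both identities to computing $\xi/|\xi|\cdot\nabla h$ for the specific product $h(x)=f(\xi\cdot x)\,g(\xi^\perp\cdot x)$.

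The second step is to apply the chain rule, which gives
\[
\nabla h \;=\; f'(\xi\cdot x)\,g(\xi^\perp\cdot x)\,\xi \;+\; f(\xi\cdot x)\,g'(\xi^\perp\cdot x)\,\xi^\perp.
\]
Here the orthogonality $\xi\cdot\xi^\perp=0$ plays the decisive role: contracting with $\xi/|\xi|$ annihilates the $g'$ term and leaves
\[
\tfrac{\xi}{|\xi|}\cdot\nabla h \;=\; |\xi|\,f'(\xi\cdot x)\,g(\xi^\perp\cdot x).
\]

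Finally, I would plug this back into the rank-one formula. Taking $a=b=\xi/|\xi|$ yields the first identity, since the factor $|\xi|$ cancels the remaining $1/|\xi|$ in $a$; taking $a=\xi^\perp/|\xi|$ and $b=\xi/|\xi|$ yields the second by the same cancellation. There is no real obstacle here; the only point requiring care is contracting on the correct (second) slot of the tensor product, which explains why both identities produce the factor $f'\,g$ rather than $f\,g'$. This structural feature is precisely what makes the lemma useful downstream: applying $\dv$ to such a tensor reproduces a purely one-dimensional oscillation $f'(\xi\cdot x)$ modulated by the transverse profile $g(\xi^\perp\cdot x)$, which is exactly the form needed for the building blocks in the convex integration scheme.
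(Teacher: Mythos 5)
Your argument is correct and is exactly the direct chain-rule computation the paper deems ``trivial'': the rank-one identity $\dv(h\,a\otimes b)=(b\cdot\nabla h)\,a$, the gradient of $h=f(\xi\cdot x)g(\xi^\perp\cdot x)$, and the orthogonality $\xi\cdot\xi^\perp=0$ that kills the $g'$ term. No gaps; it matches the paper's intent.
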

\begin{proof}
The proof is trivial.
\end{proof}

\begin{dfn}\label{def: antidivergence by hand}
For $\psi_1$, $\psi_2$, $\Psi\in C^1(\R)$ with $\Psi''= \psi_2$ and a vector $\xi\neq 0$ we define
\begin{align*}
A(\psi_1,\psi_2,\xi) &= \psi_1(\xi\cdot x) \Psi'(\xi^\perp\cdot x)\left(\frac{\xi}{|\xi|}\otimes \frac{\xi^\perp}{|\xi|} + \frac{\xi^\perp}{|\xi|}\otimes \frac{\xi}{|\xi|}\right) \\
&\hspace{0,3cm} - \psi_1'(\xi\cdot x) \Psi(\xi^\perp\cdot x)  \frac{\xi^\perp}{|\xi|}\otimes\frac{\xi^\perp}{|\xi|}.
\end{align*}
and
\begin{align*}
B(\psi_1,\psi_2,\xi) &=\psi_1(\xi\cdot x) \Psi'(\xi^\perp\cdot x) \frac{\xi^\perp}{|\xi|}\otimes\frac{\xi^\perp}{|\xi|} .
\end{align*}
By Lemma \ref{lemma: div of matrix}, these symmetric matrices satisfy
\begin{align*}
\dv A &= \psi_1(\xi\cdot x)\psi_2(\xi^\perp\cdot x) \xi,\\
\dv B &= \psi_1(\xi\cdot x)\psi_2(\xi^\perp\cdot x) \xi^\perp.
\end{align*}
Let $\mu_2\gg\mu_1$. It is not difficult to see that for $\psi_1,\psi_2,\Psi\in C^\infty_c(\R)$ with zero mean value and $\Psi''=\psi_2$, supported in $(-\frac{1}{2},\frac{1}{2})$, we have for their concentrated, fast oscillating extensions
\begin{align*}
A(\psi_{1,\mu_1}(\lambda\cdot),\psi_{2,\mu_2}(\lambda\cdot),\xi)\in C_0^\infty(\tor,\mathcal{S}),\\
B(\psi_{1,\mu_1}(\lambda\cdot),\psi_{2,\mu_2}(\lambda\cdot),\xi)\in C_0^\infty(\tor,\mathcal{S})
\end{align*}
if $\xi\in\N^2$ and the estimates
\begin{align}
\|\nabla^l A(\psi_{1,\mu_1}(\lambda\cdot),\psi_{2,\mu_2}(\lambda\cdot),\xi)\|_{L^s(\tor)}\leq  \lambda^{l-1}\mu_1^{\frac{1}{2}-\frac{1}{s}}\mu_2^{l-\frac{1}{2}-\frac{1}{s}}\max_{j_1,j_2=0,1}\|\psi_1^{(j_1)}\|_{L^s(\mathbb{T})}\|\Psi^{(j_2)}\|_{L^s(\mathbb{T})},\nonumber\\
\|\nabla^l B(\psi_{1,\mu_1}(\lambda\cdot),\psi_{2,\mu_2}(\lambda\cdot),\xi)\|_{L^s(\tor)}\leq  \lambda^{l-1}\mu_1^{\frac{1}{2}-\frac{1}{s}}\mu_2^{l-\frac{1}{2}-\frac{1}{s}}\max_{j_1,j_2=0,1}\|\psi_1^{(j_1)}\|_{L^s(\mathbb{T})}\|\Psi^{(j_2)}\|_{L^s(\mathbb{T})},\label{est A B}
\end{align}
where one uses $\mu_2\gg\mu_1$. 
\end{dfn}

\section{Main Proposition}
In this section we present the main proposition that is the key to prove Theorem \ref{thm:main}. To this end, we first introduce the Reynolds defect equation:
\begin{dfn}[Solution to the Reynolds defect equation]
A solution to the Reynolds-defect-equation is a tuple $(u,p,R,r)$ of smooth functions
\begin{align*}
u\in C([0,1],L^2(\R^2)\cap L^3(\R^2)),p\in C([0,1],L^2(\R^2)), R\in C([0,1],L^1(\R^2;\mathcal{S})),\\
 r\in C([0,1],L^\infty(\R^2)), \supp_{(t,x)} r\subseteq [0,1]\times\R^2 \text{ compact},
\end{align*} such that
\begin{align*}
\partial_t u + \dv(u\otimes u) + \nabla p &=-r -\dv \overset{\circ}{R},\\
\dv u &= 0
\end{align*}
is satisfied.
\end{dfn}

\begin{prop}[Main Proposition]\label{prop: main proposition}
Let $e\in C^\infty\left([0,1];\left[\frac{1}{2},1\right]\right)$ be an arbitrary given energy profile. There exists a constant $M_0>0$ such that the following holds:  Choose $\delta, \eta>0$ with
\begin{align*}
0<\delta<1, 0<\eta<\frac{1}{32}\delta,
\end{align*} and assume that there exists a solution $(u_0, R_0, r_0,p_0)$ to the Reynolds-Defect-equation, satisfying
\begin{align}
\frac{3}{4}\delta e(t)\leq e(t)-\int_{\R^2}|u_0|(x,t)^2\,\dx\leq\frac{5}{4}\delta e(t),\label{eq: assumption energy}\\
40 \|R_0\|_{C_tL^1_x} + \|r_0\|_{C_tL^2_x} + 2\|u_0(t)\|_{L^2(\R^2)}\|r_0\|_{C_tL^2_x}\leq \frac{1}{32}\delta. \label{eq: assumption r}
\end{align}
Then there exists another solution $(u_1, R_1, r_1,p_1)$ such that 
\begin{enumerate}[(i)]
\item \begin{align*}
\frac{3}{8}\delta e(t)\leq e(t)-\int_{\R^2}|u_1|^2(x,t)\,\dx\leq\frac{5}{8}\delta e(t),
\end{align*}
\item $r_1$ satisfies $$ \|r_1\|_{C_tL^2_x} + \|u_1\|_{C_tL^2_x} \|r_1\|_{C_tL^2_x} \leq \eta$$ and
\item $$ \|\int_0^t\curl r_1(s)\,\ds\|^p_{H^p(\R^2)}\leq\eta,$$
\item $\|R_1(t)\|_{L^1(\R^2)}\leq \eta + 4\|r_0\|_{C_tL^2_x} +2 \|r_0\|_{C_tL^2_x} \|u_0(t)\|_{L^2(\R^2)}$,
\item $\|u_1(t)-u_0(t)\|_{L^2(\R^2)}\leq M_0 \delta^\frac{1}{2}$,
\item $\|\curl (u_1-u_0)(t)\|^p_{H^p(\R^2)}\leq \eta + \|\int_0^t\curl r_0(s)\,\ds\|^p_{H^p(\R^n)}$.
\end{enumerate}
\end{prop}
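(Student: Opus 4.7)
The plan is to define
\[ u_1 = u_0 + w + w_c + v, \qquad v(t,x) := \int_0^t r_0(s,x)\,\ds, \]
where $v$ is a time corrector satisfying $\partial_t v = r_0$ (so it absorbs the previous $r_0$), $w$ is the principal convex-integration perturbation and $w_c$ is a divergence-free corrector. First I would fix a radius $\kappa_0$ so large that $\|R_0\|_{C_tL^1(\R^2\setminus B_{\kappa_0})}\ll\eta$, together with a cutoff $\chi_{\kappa_0}\in C^\infty_c(\R^2)$ equal to $1$ on $B_{\kappa_0}$ and supported in $B_{2\kappa_0}$, and a scalar $\rho(t)$ so that the $L^2$-energy added by $w$ exactly fills the gap required in (i). The classical $2$D geometric lemma then yields
\[ \rho(t)\Id-\overset{\circ}{R_0}(t,x)=\sum_{\xi\in\Lambda}a_\xi(t,x)^2\,\xi\otimes\xi \]
over a finite set $\Lambda\subset\N^2$, with $a_\xi$ of order $\delta^{1/2}$.

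\textbf{Perturbation and new error.} For each $\xi$ I would use Definition \ref{def: antidivergence by hand} to build a divergence-free block $W^\xi$ oscillating at frequency $\lambda$ and concentrated at scales $1/(\lambda\mu_1),\,1/(\lambda\mu_2)$, with $\int_{\tor}W^\xi\otimes W^\xi=\xi\otimes\xi$, and set
\[ w(t,x) = \chi_{\kappa_0}(x)\sum_{\xi\in\Lambda} a_\xi(t,x)\, W^\xi(x), \]
with $w_c$ the standard Leray-type corrector. Plugging $u_1$ into \eqref{2D Euler} produces a self-interaction error $\div(w\otimes w)+\div\overset{\circ}{R_0}$, transport errors $\partial_t w+\div(u_0\otimes w+w\otimes u_0)$, and corrector terms involving $\nabla\chi_{\kappa_0}$, $\nabla a_\xi$, $v$ and $w_c$. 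Each piece has the form $f\,u_\lambda$ or $f\,T_\lambda$ with $f$ compactly supported and $u_\lambda,T_\lambda$ mean-zero fast-oscillating, so the improved antidivergences $S_N,\tilde S_N$ from Lemma \ref{antidivergence} and Remark \ref{rem: improved antidiv} rewrite everything as $r_1+\div R_1$ with
\[ \|R_1\|_{C_tL^1}\lesssim \frac{C(\kappa_0)}{\lambda}, \qquad \|\nabla^l r_1\|_{C_tL^\infty}\lesssim\frac{C(\kappa_0,N)}{\lambda^{N-l}}, \]
and $r_1$ compactly supported in a ball of radius $\sim\kappa_0$. Estimates (i), (iv), (v) and the $L^2$-part of (ii) then follow by choosing $\lambda$, and afterwards $\mu_1,\mu_2$, large enough to absorb the $\kappa_0$-dependent constants, with the gains provided by Proposition \ref{prop: improved hoelder} and Lemma \ref{lemma: fast oscillations}.

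\textbf{Atomic Hardy-space estimates and main obstacle.} The genuinely new ingredients are (iii) and (vi), for which I would exploit concentration via Remark \ref{rem: hardy atoms}(1). The perturbation $w$ is supported on finitely many balls $B_j$ of radius $\sim 1/(\lambda\min(\mu_1,\mu_2))$ inside $\supp\chi_{\kappa_0}$. Writing $\curl w=\sum_j\curl(\mathbb{1}_{B_j}w)$, each summand is compactly supported in $B_j$, hence by the divergence theorem has zero integral, and is therefore a Hardy atom up to normalization; summing
\[ \|\curl w\|_{H^p}^p\lesssim \sum_j |B_j|\, \|\curl(\mathbb{1}_{B_j}w)\|^p_{L^\infty} \]
and using the scaling \eqref{scaling hp} together with $p<1$, this can be made $<\eta$ by taking $\mu_2$ large. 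The same atomic argument applied to $\int_0^t r_1(s)\,\ds$ (compactly supported in a ball of radius $\sim\kappa_0$, with $L^\infty$-norm of order $\lambda^{-(N-1)}$ for large $N$) yields (iii), and (vi) follows by additionally noting that $\curl v = \int_0^t\curl r_0$, which produces exactly the extra summand on the right-hand side. The main obstacle is the delicate balance of the three scales $\kappa_0,\lambda,\mu_2$: the atom count grows polynomially in $\kappa_0$ and $\mu_2$, the atomic bound pays a factor $|B_j|\sim(\lambda\mu_2)^{-2}$, and the net exponent of $\mu_2$ becomes negative only when $p<1$; the further constraint $p>2/3$ emerges from balancing the $\kappa_0$-growth (itself dictated by how fast $R_0$ decays at infinity) against the Hardy-scaling exponent, and forces the parameter ordering $\kappa_0\to N\to\lambda\to(\mu_1,\mu_2)$.
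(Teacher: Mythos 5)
Your high-level picture — cut off on a large ball $B_{\kappa_0}$, decompose $\rho\Id-\overset{\circ}{R_0}$ with a $2$D geometric lemma, add an oscillating–concentrated perturbation, reduce the error with the improved antidivergences $S_N,\tilde S_N$, and pass to Hardy via the atomic estimate of Remark \ref{rem: hardy atoms} on small balls — does match the skeleton of the paper's proof, and the Hardy-atom argument for $\curl w$ and for $\int_0^t\curl r_1\,\ds$ is essentially the same. However, there are substantive gaps.

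\textbf{The quadratic error cannot be handled by antidivergence alone.} You assert that after plugging $u_1$ into the equation, ``each piece has the form $f\,u_\lambda$ or $f\,T_\lambda$'' and is therefore reduced by $S_N,\tilde S_N$. This fails for $\dv(w\otimes w)+\dv\overset{\circ}{R_0}$. Splitting $a_k^2\,W^p_k\otimes W^p_k$ into its mean $a_k^2\,\xi_k\xi_k/|\xi_k|^2$ (which cancels $\overset{\circ}{R_0}$ up to a pressure) and its mean-zero part, one is left with
\[
\dv\!\Big(a_k^2\big(W^p_k\otimes W^p_k-\tfrac{\xi_k\otimes\xi_k}{|\xi_k|^2}\big)\Big)
= a_k^2\,\dv\!\big(W^p_k\otimes W^p_k\big)
+\big(W^p_k\otimes W^p_k-\tfrac{\xi_k\otimes\xi_k}{|\xi_k|^2}\big)\!\cdot\!\nabla a_k^2 .
\]
The second term is a ``slow coefficient times mean-zero oscillation'' and your antidivergences handle it. The first term is \emph{not}: $\|\dv(W^p_k\otimes W^p_k)\|_{L^1(\tor)}\sim\lambda\mu_1$, so even after one application of the antidivergence the resulting tensor has $L^1$-norm of order $\mu_1$, which blows up rather than going to zero. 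The paper resolves this by taking \emph{traveling-wave} building blocks $W^p_k(x,t)=w_k(\Lambda_k x-\omega t e_1)\,\xi_k/|\xi_k|$ and a companion block $Y_k$ with $\dv(W^p_k\otimes W^p_k)=\partial_t Y_k$ (Proposition \ref{prop: building blocks}), and introduces the temporal corrector $u^t=-\sum_k\mathbb{P}(a_k^2 Y_k)$. Then $a_k^2\partial_tY_k$ cancels against $\partial_t u^t$ up to lower-order terms in $\partial_t a_k^2$, which \emph{are} of the admissible form and are small as $\omega\to\infty$. This temporal mechanism is missing from your plan and without it the scheme does not close.

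\textbf{Divergence-free structure of $v$ and of $w$.} Your $v(t,x)=\int_0^t r_0(s,x)\,\ds$ is not divergence-free in general, so $u_1$ is not solenoidal; the paper takes $v=\mathbb{P}\int_0^t r_0(s)\,\ds$ and puts the resulting $\nabla p^r$ into the pressure. Similarly, $w=\chi_{\kappa_0}\sum_\xi a_\xi W^\xi$ with ``a standard Leray-type corrector'' leaves it unclear how you maintain both exact incompressibility and the $H^p$-smallness of $\curl$ of the corrector. The paper avoids this by absorbing the cutoff into $a_k=\chi_\kappa\rho^{1/2}\Gamma_k(\cdot)$ and defining $w=\sum_k\nabla^\perp H^k$, which is automatically divergence-free and produces the needed correctors $W^c_k,W^{cc,\parallel}_k,W^{cc,\perp}_k$ with explicit scaling.

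\textbf{Reason for $p>2/3$.} Your attribution of $p>2/3$ to ``balancing the $\kappa_0$-growth against the Hardy-scaling exponent'' is not accurate. The constraint comes from the moment condition in Definition \ref{dfn: hardy atoms}: an $H^p$-atom must satisfy $\int x^\beta a\,\dx=0$ for all $|\beta|\le 2(p^{-1}-1)$. For $2/3<p<1$ this means only the zeroth moment, which $\curl\theta_j$ satisfies automatically as a derivative of a compactly supported function; for $p\le 2/3$ one would also need first moments, which the construction does not provide. The $\kappa_0$ dependence is instead absorbed by letting $\lambda,\mu_1,\mu_2,\omega$ depend on $\kappa_0$, exactly as you describe for the other estimates.
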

\begin{proof}[Proof of the Main Theorem assuming Proposition \ref{prop: main proposition}]
The solution to \eqref{2D Euler} is constructed iteratively. We start with the trivial solution $(u_0,p_0,R_0,r_0)\equiv 0$ and choose $\delta_0 = 1$. Then obviously \eqref{eq: assumption energy} and \eqref{eq: assumption r} are satisfied.  Let  $\delta_n= 2^{-n}$ for $n\geq 0$ and $\eta_n= \frac{\delta_{n+1}}{11584}$ for $n\geq -1$. Assuming that the first $n+1$ solutions $(u_{j},p_{j},R_{j},r_{j})_{0\leq j\leq n}$ are already constructed and that $(u_n,p_n,R_n,r_n)$ satisfies \eqref{eq: assumption energy}, \eqref{eq: assumption r} with $\delta_n$,  we obtain $(u_{n+1},p_{n+1},R_{n+1},r_{n+1})$ by applying Proposition \ref{prop: main proposition} with $\delta_n$, $\eta_n$. 
We show that we can proceed the iteration, i.e. that $(u_{n+1},p_{n+1},R_{n+1},r_{n+1})$  satisfies  \eqref{eq: assumption energy}, \eqref{eq: assumption r} with $\delta_{n+1}$. First, we note that by $(ii)$, we have 
\begin{align}\label{inductive est for r}
\|r_j\|_{C_tL^2_x} + \|u_j\|_{C_tL^2_x}\|r_j\|_{C_tL^2_x} \leq \eta_{j-1}
\end{align}
for all $0\leq j\leq n+1$. Now, by $(i)$, the new solution satisfies
\begin{align*}
\frac{3}{8}\delta_n e(t)\leq e(t)-\int_{\R^2}|u_{n+1}(t)|^2\,\dx\leq\frac{5}{8}\delta_n e(t)
\end{align*}
and therefore
\begin{align*}
\frac{3}{4}\delta_{n+1} e(t)\leq e(t)-\int_{\R^2}|u_1(t)|^2\,\dx\leq\frac{5}{4}\delta_{n+1} e(t),
\end{align*}
i.e. \eqref{eq: assumption energy} is satisfied. Also, by $(iv)$ and \eqref{inductive est for r} we have
\begin{align*}
&40 \|R_{n+1}\|_{C_tL^1_x} + \|r_{n+1}\|_{C_tL^2_x} + 2\|u_{n+1}(t)\|_{L^2(\R^2)}\|r_{n+1}\|_{C_tL^2_x}\\
&\hspace{0,3cm}\leq 40(\eta_n + 4\|r_n\|_{C_tL^2_x} + 2\|r_n\|_{C_tL^2_x}\|u_n\|_{C_tL^2_x})\\
&\hspace{0,3cm} +  \|r_{n+1}\|_{C_tL^2_x} + 2\|u_{n+1}(t)\|_{L^2(\R^2)}\|r_{n+1} \|_{C_tL^2_x}\\
&\hspace{0,3cm}\leq 40\eta_n + 160\eta_{n-1} + 2\eta_n = 362\eta_n = \frac{1}{32}\delta_{n+1},
\end{align*}
hence \eqref{eq: assumption r} holds. This shows that with our choice of $(\delta_n)_{n}$ and $(\eta_n)_{n}$ we can indeed construct a sequence $(u_n,p_n,R_n,r_n)_{n\in\N}$ of solutions to the Reynolds-defect-equation. By $(v)$, 
\begin{align*}
\sup_{t\in [0,1]}\|u_{n+1}(t)-u_n(t)\|_{L^2(\R^2)}\leq M_0 2^{-\frac{n}{2}}
\end{align*}
for all $n\in\N$, i.e. there exists $u\in C([0,1],L^2_\sigma(\R^2))$ such that $u_n\rightarrow u$ in $C([0,1],L^2_\sigma(\R^2))$. By $(ii)$ and  $(iv)$,
\begin{align*}
r_n&\rightarrow 0 \text{ in } C([0,1], L^1(\R^2)),\\
R_n,\overset{\circ}{R}_n &\rightarrow 0 \text{ in } C([0,1], L^1(\R^2,\mathcal{S})),
\end{align*}
showing that $u$ is a weak solution to \eqref{2D Euler}. By $(iii)$ and $(vi)$, inductively we have
\begin{align*}
\|\curl (u_{n+1}-u_n)(t)\|^p_{H^p(\R^2)}\leq \eta_n + \eta_{n-1}, 
\end{align*}
which shows that there exists $v\in C([0,1],H^p(\R^2))$ such that 
\begin{align*}
\curl u_n \rightarrow v \text{ in } C([0,1],H^p(\R^2)).
\end{align*}
But since $ H^p(\R^2)\hookrightarrow \mathcal{S}'(\R^2)$ is a continuous inclusion, this shows that $v = \curl u$. 
\end{proof}

\section{The Building Blocks}
We fix the vectors $$\xi_1= e_1, \xi_2 = e_2, \xi_3= e_1+e_2, \xi_4 = e_1-e_2$$ in $\R^2$. Let us make a list of all parameters that we are going to use. They will be fixed in the order below.\vspace{0,3cm}\\
\begin{tabular}[h]{l|l}
Parameter & meaning  \\
\hline
$\eta,\delta$ & Parameters in the main proposition that will ensure convergence\\
$\kappa$ & size of the ball where the error is reduced, $R_0$ is small outside $B_\kappa$\\
$\varepsilon$ &  smoothing of $\rho$ (see Section \ref{sec: perturbations})\\
$\mu_1$ & concentration \\
$\mu_2$ & very high concentration\\
$\omega$ & phase speed\\
$\lambda$ & oscillation
\end{tabular}\hspace{0,3cm}\\
Let $\Phi:\R\rightarrow\R$ be a smooth, odd function with support in $(-\frac{1}{2},\frac{1}{2})$, and $\int\Phi\,\dx = 0$ such that $\varphi := \Phi^{'''}$ satisfies $\int\varphi^2\,\dx = 1$. Furthermore, we denote by $\varphi_\mu^k$ the translated function
\begin{align*}
\varphi_\mu^k(x) = \varphi_\mu\left(x-\frac{k}{16}|\xi_k|^2\right).
\end{align*}
The translation will ensure the disjointness of the supports of different building blocks, we will prove this in  \mbox{Lemma \ref{lemma: supports}.}
Let $\mu_2\gg\mu_1\gg 1$ and $\lambda,\omega\gg 1$ with $\lambda\in\N$ to be fixed in \mbox{Section \ref{sec: proof of main prop}.} For $k=1,2,3,4$, let us introduce
\begin{align*}
w_k(x) &= \varphi^k_{\mu_1}(\lambda x_1)\varphi_{\mu_2}(\lambda x_2),\\
w_k^c(x)&=  -\frac{\mu_1}{\mu_2}(\varphi')_{\mu_1}^k(\lambda x_1) (\Phi'')_{\mu_2}(\lambda x_2),\\
w_k^{cc}(x) &= -\frac{1}{\lambda\mu_2}\varphi_{\mu_1}^k(\lambda x_1)(\Phi'')_{\mu_2}(\lambda x_2),\\
q_k(x) &= \frac{1}{\omega}(\varphi^k_{\mu_1})^2(\lambda x_1)\varphi_{\mu_2}^2(\lambda x_2).
\end{align*}
\begin{lemma}\label{lemma: small w est}
It holds
\begin{align*}
 \int_{\tor} w_k^2 \,\dx &=1,\\
 \int_{\tor} w_k \,\dx &= \int_{\tor} w_k^c \,\dx = \int_{\tor} w_k^{cc} \,\dx = 0.
\end{align*}
For any $s\in [1,\infty]$, we have the estimates
\begin{align*}
\|\partial_1^{l_1}\partial_2^{l_2} w_k\|_{L^s(\tor)} &\leq C(s) \lambda^{l_1+l_2}\mu_1^{l_1+\frac{1}{2}-\frac{1}{s}}\mu_2^{l_2+\frac{1}{2}-\frac{1}{s}},\\
\|\partial_1^{l_1}\partial_2^{l_2} w_k^c\|_{L^s(\tor)} & \leq C(s)\lambda^{l_1+l_2}\mu_1^{l_1+\frac{3}{2}-\frac{1}{s}}\mu_2^{l_2-\frac{1}{2}-\frac{1}{s}},\\
 \|\partial_1^{l_1}\partial_2^{l_2} w_k^{cc}\|_{L^r(\tor)}&\leq C(s)\lambda^{l_1+l_2-1} \mu_1^{l_1+\frac{1}{2}-\frac{1}{s}}\mu_2^{l_2-\frac{1}{2}  - \frac{1}{s}},\\
 \|\partial_1^{l_1}\partial_2^{l_2} q_k\|_{L^s(\tor)} &\leq C(s) \omega^{-1}\lambda^{l_1+l_2}\mu_1^{l_1+1-\frac{1}{s}}\mu_2^{l_2+1-\frac{1}{s}}.
\end{align*}
\end{lemma}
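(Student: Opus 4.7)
Each of $w_k$, $w_k^c$, $w_k^{cc}$, $q_k$ is a pure tensor product of two one-dimensional functions, each of the form $F(\lambda x_j)$ where $F$ is a $1$-periodic concentrated profile on $\mathbb{T}$. Consequently, Fubini reduces every claimed identity or norm bound to a product of one-dimensional computations on $\mathbb{T}$, and the whole proof is a bookkeeping exercise combining (a) the scaling identity \eqref{eq: scaling}, (b) the chain rule in the $\lambda$-oscillation, and (c) the compact support of $\Phi$ in $(-\tfrac12,\tfrac12)$.

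\textbf{Mean-value identities and the $L^2$ normalization.} By Fubini,
\[
\int_{\tor} w_k \,\dx \;=\; \Bigl(\int_{\mathbb{T}} \varphi^k_{\mu_1}(\lambda x_1)\,\di\Bigr)\Bigl(\int_{\mathbb{T}} \varphi_{\mu_2}(\lambda x_2)\,\dii\Bigr).
\]
A change of variable $y = \lambda x_j$ combined with $1$-periodicity of $\varphi_{\mu_j}$ turns each one-dimensional integral into $\int_{\mathbb{T}} \varphi_\mu = \mu^{-1/2}\int_{\R} \varphi$, and this vanishes because $\varphi = \Phi'''$ is a derivative of a function compactly supported in $(-\tfrac12,\tfrac12)$. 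The identical argument, with $\varphi$ replaced by $\varphi' = \Phi^{(4)}$ or by $\Phi''$, handles $w_k^c$ and $w_k^{cc}$. For $\int_{\tor} w_k^2 \,\dx = 1$, Fubini plus \eqref{eq: scaling} at $r=2$ gives $\|\varphi^k_{\mu_1}(\lambda \cdot)\|_{L^2(\mathbb{T})}^2 \|\varphi_{\mu_2}(\lambda\cdot)\|_{L^2(\mathbb{T})}^2 = \|\varphi\|_{L^2(\R)}^4 = 1$ by the normalization $\int \varphi^2 = 1$.

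\textbf{Derivative norm estimates.} The one-dimensional workhorse is
\[
\|\partial^l[\varphi_\mu(\lambda\cdot)]\|_{L^s(\mathbb{T})} \;=\; \lambda^l \|\partial^l \varphi_\mu\|_{L^s(\mathbb{T})} \;=\; \lambda^l\mu^{l+\tfrac12-\tfrac1s}\|\varphi^{(l)}\|_{L^s(\R)},
\]
where the first step is the chain rule together with $1$-periodicity, and the second follows from \eqref{eq: scaling} applied to $\varphi^{(l)}$, after noting that $\partial^l \varphi_\mu = \mu^l (\varphi^{(l)})_\mu$. Plugging this into the tensor product structure yields the bound on $w_k$ directly, with the constant $C(s)$ given by a maximum of $L^s(\R)$ norms of the derivatives of $\varphi$. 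For $w_k^c$ and $w_k^{cc}$ the same recipe applies, using $\varphi'$ or $\Phi''$ as the base profile in the appropriate slot; the prefactors $\mu_1/\mu_2$ and $1/(\lambda\mu_2)$ in the definitions account precisely for the shifted exponents in the claimed estimates. For $q_k$, we use the Leibniz rule together with $\|\varphi_\mu^2\|_{L^s(\mathbb{T})} = \mu^{1-1/s}\|\varphi^2\|_{L^s(\R)}$ (and its derivative analogues), which replaces the $\mu^{1/2-1/s}$ exponent by $\mu^{1-1/s}$ and, combined with the $1/\omega$ prefactor, delivers the final bound.

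\textbf{Main difficulty.} There is no substantial obstacle: the lemma is essentially a table of scaling exponents. The one point that needs care is correctly tracking where each derivative lands -- whether it acts on the oscillation (producing a factor $\lambda$), on the concentrated profile (producing a factor $\mu$), or is already absorbed in the prefactor -- so that the exponent to which \eqref{eq: scaling} is applied matches the order of derivative of the base profile $\varphi$ or $\Phi$ appearing after all chain-rule manipulations.
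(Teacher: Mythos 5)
Your proof is correct and follows essentially the same strategy as the paper: exploit the tensor-product structure via Fubini to reduce to one-dimensional integrals, then apply the scaling identity \eqref{eq: scaling} (with the chain rule for the $\lambda$-oscillation and the fact that $\partial^l\varphi_\mu=\mu^l(\varphi^{(l)})_\mu$), and use that $\varphi$, $\varphi'$, $\Phi''$ all have vanishing integral as derivatives of compactly supported functions together with $\int\varphi^2=1$. The paper's proof is considerably terser, but the bookkeeping you spell out is exactly what it implicitly refers to.
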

\begin{proof}
We have 
\begin{align*}
\int_{\tor} w_k^2(x) \,\dx =\int_0^1 (\varphi_{\mu_1}^k)^2(\lambda x_1) \,\di\cdot \int_0^1 \varphi^2_{\mu_2}(\lambda x_2) \,\dii =1
\end{align*}
by  \eqref{eq: scaling} and since $\int \varphi^2\, \dx =1$. Similarly, one gets the zero mean values of $w_k, w_k^c$ and $w_k^{cc}$ by noting that $\int_{\mathbb{T}} \varphi \,\dx =0$ since $\varphi = \Phi^{'''}$ is a derivative. The estimates can also be proven using \eqref{eq: scaling}.
\end{proof}
For $k=1,2,3,4$ we define the linear rotations
\begin{align}\label{def: Lambda}
\Lambda_k:\R^2&\rightarrow\R^2,\nonumber\\
x&\mapsto (\xi_k\cdot x, \xi_k^\perp\cdot x).
\end{align}
Our main building block is now defined as
\begin{align*}
W^p_k(x,t) &= w_k\left(\Lambda_k\left(x-\omega t \frac{\xi_k}{|\xi_k|^2}\right)\right) \frac{\xi_k}{|\xi_k|}\\
&= w_k\left(\Lambda_k x - \omega t e_1\right)\frac{\xi_k}{|\xi_k|},
\end{align*}
i.e.
\begin{align*}
W^p_k(x,t)&:=W^p_{\xi_k,\mu_1,\mu_2,\lambda,\omega}(x,t) = \varphi^k_{\mu_1}(\lambda(\xi_k\cdot x-\omega t))\varphi_{\mu_2}(\lambda \xi_k^\perp\cdot x)\frac{\xi_k}{|\xi_k|},
\end{align*}
which means that we first rotate $w_k$ and move in time in the direction of $\xi_k$.
This vector field is not divergence free. We define the corrector $W^c_k$ by
\begin{align*}
W^c_k(x,t) &:=W^c_{\xi_k,\mu_1,\mu_2,\lambda,\omega}(x,t)=w_k^c\left(\Lambda_k\left(x-\omega t \frac{\xi_k}{|\xi_k|^2}\right)\right) \frac{\xi_k^\perp}{|\xi_k|}\\
&=-\frac{\mu_1}{\mu_2}(\varphi')^k_{\mu_1}(\lambda (\xi_k\cdot x-\omega t)) (\Phi'')_{\mu_2}(\lambda \xi_k^\perp\cdot x)\frac{\xi_k^\perp}{|\xi_k|}
\end{align*} 
and observe that $\dv(W^p_k + W^c_k) = 0$, see Proposition \ref{prop: building blocks}. We introduce further building blocks by
\begin{align*}
W^{cc,\parallel}_k(x,t) &:=W^{cc,\parallel}_{\xi_k,\mu_1,\mu_2,\lambda,\omega}(x,t)=w_k^{cc}\left(\Lambda_k\left(x-\omega t \frac{\xi_k}{|\xi_k|^2}\right)\right) \frac{\xi_k}{|\xi_k|}\\
&=-\frac{1}{\lambda\mu_2}\varphi^k_{\mu_1}(\lambda (\xi_k\cdot x-\omega t)) (\Phi'')_{\mu_2}(\lambda \xi_k^\perp\cdot x)\frac{\xi_k}{|\xi_k|},\\
W^{cc,\perp}_k(x,t) &:=W^{cc,\perp}_{\xi_k,\mu_1,\mu_2,\lambda,\omega}(x,t)=w_k^{cc}\left(\Lambda_k\left(x-\omega t \frac{\xi_k}{|\xi_k|^2}\right)\right) \frac{\xi_k^\perp}{|\xi_k|}\\
&=-\frac{1}{\lambda\mu_2}\varphi^k_{\mu_1}(\lambda (\xi_k\cdot x-\omega t)) (\Phi'')_{\mu_2}(\lambda \xi_k^\perp\cdot x)\frac{\xi_k^\perp}{|\xi_k|}.
\end{align*} 
Finally, we introduce the building blocks for our time-corrector
\begin{align*}
Y_k(x,t)&:=Y_{\xi_k,\mu_1,\mu_2,\lambda,\omega}(x,t)=q_k\left(\Lambda_k\left(x-\omega t \frac{\xi_k}{|\xi_k|^2}\right)\right)\xi_k\\
 &=\frac{1}{\omega}(\varphi^k_{\mu_1})^2(\lambda(\xi_k\cdot x - \omega t))(\varphi_{\mu_2})^2(\lambda\xi_k^\perp\cdot x) \xi_k
\end{align*}
We note that our building blocks are again periodic functions on $\R^2$ with period $1$ in both variables since $\xi_k\in\N^2$.
\begin{prop}[Building blocks]\label{prop: building blocks}
The building blocks are $\lambda$-periodic and  satisfy 
\begin{enumerate}[(i)]
\item $\dv( W_k^p\otimes W_k^p) = \partial_t Y_k$,
\item $\int_{\tor} W_k^p\otimes  W_k^p (x,t)\,\dx = \frac{\xi_k}{|\xi_k|}\otimes\frac{\xi_k}{|\xi_k|}$,
\item $\|W_k^p(\cdot,t)\|_{L^s(\tor)} = \|w_k\|_{L^s(\tor)}$ for all $s\in [1,\infty]$,
\item $\int_{\tor} W_k^p(x,t)\,\dx = \int_{\tor} W_k^c(x,t)\,\dx = \int_{\tor} W_k^{cc,\parallel}(x,t)\,\dx = \int_{\tor} W_k^{cc,\perp}(x,t)\,\dx = 0$.
\end{enumerate}
Furthermore, for all $k\in\N, l\in\N$ they satisfy the following estimates:
\begin{align*}
\|\nabla^l W_k^p\|_{L^s([-k,k]^2)} &\leq C(s) k^\frac{2}{s}\lambda^l\mu_1^{\frac{1}{2}-\frac{1}{s}}\mu_2^{l+\frac{1}{2}-\frac{1}{s}},\\
\|\nabla^l W_k^c\|_{L^s([-k,k]^2)}  &\leq C(s) k^\frac{2}{s}\lambda^l\mu_1^{\frac{3}{2}-\frac{1}{s}}\mu_2^{l-\frac{1}{2}-\frac{1}{s}},\\
\|\nabla^l W_k^{cc,\parallel}\|_{L^s([-k,k]^2)}  &\leq   C(s)k^\frac{2}{s} \lambda^{l-1} \mu_1^{\frac{1}{2}-\frac{1}{s}}\mu_2^{l-\frac{1}{2}  - \frac{1}{s}},\\
\|\nabla^l W_k^{cc,\perp}\|_{L^s([-k,k]^2)}  &\leq  C(s)k^\frac{2}{s} \lambda^{l-1} \mu_1^{\frac{1}{2}-\frac{1}{s}}\mu_2^{l-\frac{1}{2}  - \frac{1}{s}},\\
\|\nabla^l Y_k\|_{L^s([-k,k]^2)}  &\leq  C(s)k^\frac{2}{s} \omega^{-1}\lambda^l\mu_1^{1-\frac{1}{s}}\mu_2^{l+1-\frac{1}{s}}
\end{align*}
\end{prop}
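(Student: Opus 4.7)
The plan is to verify each item and norm bound by direct computation, reducing everything to the base identities and $L^s$ estimates for $w_k, w_k^c, w_k^{cc}, q_k$ supplied by Lemma \ref{lemma: small w est}. Periodicity is immediate: since $\xi_k, \xi_k^\perp \in \Z^2$, the rotation $\Lambda_k$ from \eqref{def: Lambda} carries $\Z^2$ into itself, and combined with $\lambda \in \N$ and the $1$-periodicity of $\varphi_{\mu_i}^k$ on $\mathbb{T}$, each building block is $1$-periodic in both coordinates of $x$ for every fixed $t$. For (i) I would observe that
\[
W_k^p \otimes W_k^p = g(x,t)\,\frac{\xi_k \otimes \xi_k}{|\xi_k|^2}, \qquad g(x,t) = (\varphi_{\mu_1}^k)^2(\lambda(\xi_k\cdot x - \omega t))\,\varphi_{\mu_2}^2(\lambda\xi_k^\perp\cdot x);
\]
taking $\dv_x$ of such a rank-one tensor gives $(\xi_k\cdot\nabla_x g/|\xi_k|^2)\,\xi_k$, and since $\xi_k\perp\xi_k^\perp$, only the $\xi_k$-derivative of $(\varphi_{\mu_1}^k)^2$ contributes, producing a factor $\lambda$ by the chain rule. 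Comparing with $\partial_t Y_k$, whose time chain rule supplies $-\omega\lambda$ against the prefactor $\omega^{-1}$ in $Y_k$, identifies the two sides (up to sign).

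Items (ii)--(iv) all follow from the change of variables $y = \Lambda_k(x - \omega t\,\xi_k/|\xi_k|^2)$. Although $|\det\Lambda_k|=|\xi_k|^2 \in \{1,2\}$, the lattice $\Lambda_k(\Z^2)\subset\Z^2$ has index $|\xi_k|^2$, so for any $1$-periodic $F:\R^2\to\R$,
\[
\int_{[0,1]^2} F\bigl(\Lambda_k(x-\text{shift})\bigr)\,\dx = \int_{[0,1]^2} F(y)\,\dy.
\]
Applying this identity with $F = w_k^2$ yields (ii) via $\int_{\tor} w_k^2 = 1$; with $F = |w_k|^s$ yields (iii); and with $F \in \{w_k, w_k^c, w_k^{cc}\}$ yields (iv), using the zero-mean identities from the same lemma. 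For the norm estimates on $[-k,k]^2$, $1$-periodicity tiles this square with $(2k)^2$ fundamental cells, so $\|\cdot\|_{L^s([-k,k]^2)} \le (2k)^{2/s}\|\cdot\|_{L^s(\tor)}$. A chain-rule expansion of $\nabla^l W_k^p$ (and of the correctors), combined with the same change of variables, reduces these norms to sums of $\|\partial_1^{l_1}\partial_2^{l_2} w_k\|_{L^s(\tor)}$ with $l_1+l_2=l$ (and analogues for $w_k^c, w_k^{cc}, q_k$), up to bounded factors $|\xi_k|^l\le 2^{l/2}$. Exploiting $\mu_2\gg\mu_1$, the dominant contribution is $(l_1,l_2)=(0,l)$, and inserting the scalings from Lemma \ref{lemma: small w est} produces exactly the bounds stated.

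I do not expect a genuine obstacle: every step is routine algebra or a standard change of variables. The mildly delicate point is the lattice-index observation above, which is needed to integrate $1$-periodic functions of $\Lambda_k x$ over the unit cell when $|\det \Lambda_k|=2$ (that is, for $\xi_3, \xi_4$); once that is clear, the rest of the verification is mechanical.
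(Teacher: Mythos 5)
Your proof is correct and follows essentially the same line as the paper's: periodicity from $\Lambda_k(\Z^2)\subset\Z^2$ and $\lambda\in\N$, item (i) from the rank-one divergence identity of Lemma~\ref{lemma: div of matrix}, items (ii)--(iv) by the change of variables $y=\Lambda_k(x-\text{shift})$ (your lattice-index formulation is an equivalent, slightly cleaner rephrasing of the paper's geometric tiling argument for the case $|\det\Lambda_k|=2$), and the norm bounds by tiling $[-k,k]^2$ with $(2k)^2$ fundamental cells and invoking Lemma~\ref{lemma: small w est} together with $\mu_2\gg\mu_1$. Your parenthetical ``(up to sign)'' in (i) is in fact a correct and worthwhile observation: with the stated definitions one has $\partial_t Y_k = -\lambda\bigl((\varphi^k_{\mu_1})^2\bigr)'(\lambda(\xi_k\cdot x-\omega t))\,\varphi_{\mu_2}^2(\lambda\xi_k^\perp\cdot x)\,\xi_k$, whereas Lemma~\ref{lemma: div of matrix} gives $\dv(W_k^p\otimes W_k^p)=+\lambda(\cdots)\,\xi_k$, so the identity as written should be $\dv(W_k^p\otimes W_k^p)=-\partial_t Y_k$ (equivalently, $q_k$ should carry a $-\omega^{-1}$ prefactor); the cancellation used in the paper's derivation of $R^Y$ in Section~8 relies on this and inherits the same sign slip, so you were right to flag it rather than silently reproduce it.
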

\begin{proof}
For $(i)$, we have by Lemma \ref{lemma: div of matrix} with $f(x) = (\varphi^k_{\mu_1})^2(\lambda (x - \omega t))$ and $g(x)=\varphi_{\mu_2}^2(\lambda x)$
\begin{align*}
\dv( W_k^p\otimes W_k^p) & = \dv \left((\varphi^k_{\mu_1})^2(\lambda (\xi_k\cdot x - \omega t))\varphi_{\mu_2}^2(\lambda \xi_k^\perp\cdot x)\frac{\xi_k}{|\xi_k|}\otimes\frac{\xi_k}{|\xi_k|} \right)\\
&= \lambda \left((\varphi^k_{\mu_1})^2\right)'(\lambda (\xi_k\cdot x - \omega t))\varphi_{\mu_2}^2(\lambda\xi_k^\perp\cdot x)\xi_k\\
&=\partial_t Y_k.
\end{align*}
For $(ii)$, this is immediate for $k=1$, since by Lemma \ref{lemma: small w est}
\begin{align*}
\int_{\tor} W_k^p\otimes  W_k^p \,\dx &= \int_{\tor} w_k^2(x-\omega t e_1) \,\dx\cdot \frac{\xi_k}{|\xi_k|}\otimes\frac{\xi_k}{|\xi_k|} = \frac{\xi_k}{|\xi_k|}\otimes\frac{\xi_k}{|\xi_k|}.
\end{align*}
The same is true for $k=2$ by switching the roles of $x_1$ and $x_2$ in the definition of $w_k$. For $k=3$, we calculate with the transformation rule by rotating the cube $[-\frac{1}{2},\frac{1}{2}]^2$ by $\Lambda_k$
\begin{align*}
\int_{\tor} W_k^p\otimes  W_k^p \,\dx  &= \int_{[-\frac{1}{2},\frac{1}{2}]^2} w_k^2(\Lambda_k x- \omega t e_1)\,\dx\cdot \frac{\xi_k}{|\xi_k|}\otimes\frac{\xi_k}{|\xi_k|}\\
&= \frac{1}{|\det D\Lambda_k|}\int_Q w_k^2(x-\omega t e_1) \,\dx \cdot \frac{\xi_k}{|\xi_k|}\otimes\frac{\xi_k}{|\xi_k|} 
\end{align*}
where $Q = \Lambda_k([-\frac{1}{2},\frac{1}{2}]^2)$ is the by 90 degress rotated and scaled cube with vertices $\left\lbrace \pm e_1, \pm e_2\right\rbrace$. It is not difficult to see that, by a geometric argument,  it holds $\int_Q w_k^2 \,\dx= 2\int_{\tor} w_k^2 \,\dx$ because $w_k$ is periodic. Since $|\det D\Lambda_k|=2$ for $k=3$, we have
\begin{align*}
\int_{\tor} W_k^p\otimes  W_k^p \,\dx = \int_{\tor} w_k^2(x-\omega t e_1) \,\dx \cdot \frac{\xi_k}{|\xi_k|}\otimes\frac{\xi_k}{|\xi_k|} = \frac{\xi_k}{|\xi_k|}\otimes\frac{\xi_k}{|\xi_k|},
\end{align*}
and the same reasoning holds for $k=4$. For $(iii)$, we do a similar calculation and obtain
\begin{align*}
\|W_k(\cdot,t)\|_{L^s(\tor)}^s=\int_{\tor} |W_k^p|^s \,\dx = \int_{\tor} |w_k(x-\omega t e_1)|^s\,\dx = \|w_k\|^s_{L^s(\tor)}
\end{align*}
for any $s\in [1,\infty)$, and the same calculations show $(iv)$. The estimates follow directly from \mbox{Lemma \ref{lemma: small w est}} and exploiting the fact that $\mu_2\gg\mu_1$.
\end{proof}
\begin{lemma}[Disjointness of supports]\label{lemma: supports}
We have 
\begin{align*}
\supp W_k^p = \supp W_k^c =\supp W_k^{cc,\parallel}=\supp W_k^{cc,\perp} = \supp Y_k
\end{align*}
and for large enough $\mu_1$ (independent of $\lambda, \mu_2$) it holds
\begin{align*}
\supp W_{k_1}^p \cap \supp W_{k_2}^p = \emptyset
\end{align*}
for $k_1\neq k_2$.
\end{lemma}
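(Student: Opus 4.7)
The plan is to first establish the equality of the five supports for a fixed index $k$, then to handle the disjointness for different $k$ by a direct case analysis that exploits the specially chosen translations $\tfrac{k}{16}|\xi_k|^2$ in the definition of $\varphi^k_\mu$.

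For the first part, I would note that each of the five building blocks has the product form
\[
W^\bullet_k(x,t) \;=\; f_1\bigl(\lambda(\xi_k\cdot x - \omega t)\bigr)\, f_2\bigl(\lambda\, \xi_k^\perp\cdot x\bigr)\, v_\bullet,
\]
where $v_\bullet$ is a constant nonzero vector and the scalar factors are taken from $f_1 \in \{\varphi^k_{\mu_1},\, (\varphi')^k_{\mu_1},\, (\varphi^k_{\mu_1})^2\}$ and $f_2 \in \{\varphi_{\mu_2},\, (\Phi'')_{\mu_2},\, (\varphi_{\mu_2})^2\}$. Since $\varphi = \Phi'''$, all of these factors are powers or derivatives of the single bump $\Phi$. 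Provided $\Phi$ is chosen so that $\Phi,\Phi',\Phi'',\Phi'''$ share a common support in $(-1/2,1/2)$ (which we may arrange, e.g., by taking $\Phi$ analytic on the interior of its support), each of the concentrated periodic functions above has the same support in $\T$. The claimed equality of the five sets follows.

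For the disjointness, I would fix $k_1\neq k_2$ and pass to the scaled variables $y = \lambda x$, $\tau = \lambda\omega t$. Then $(x,t) \in \supp W^p_{k_i}$ is equivalent, up to errors of size at most $\tfrac{1}{2\mu_1}$ and $\tfrac{1}{2\mu_2}$ respectively, to the two modular conditions
\[
\xi_{k_i}\cdot y \;\equiv\; \tau + \tfrac{1}{2} + \tfrac{k_i}{16}|\xi_{k_i}|^2 \pmod{1}, \qquad \xi_{k_i}^\perp\cdot y \;\equiv\; \tfrac{1}{2}\pmod{1}.
\]
Requiring simultaneous membership in both supports yields four such conditions; eliminating $y_1, y_2$ leaves two constraints on $\tau$ modulo $1$ (or modulo $\tfrac{1}{2}$ whenever $|\xi_{k_i}|^2 = 2$). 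I would then verify in each of the six unordered pairs $(k_1,k_2)$ that these two constraints are inconsistent, with a residual gap of at least $\tfrac{1}{16}$ in the relevant modulus. Choosing $\mu_1$ larger than an absolute constant (say $\mu_1 > 32$) then makes the $O(1/\mu_1)$ slack too small to bridge this gap, so no $(x,t)$ lies in both supports.

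The main obstacle is the bookkeeping of this case analysis. The offsets $\tfrac{k}{16}|\xi_k|^2 \in \{\tfrac{1}{16},\tfrac{2}{16},\tfrac{6}{16},\tfrac{8}{16}\}$ have been tuned so that none of the six pairings produces compatible constraints on $\tau$: without this shift the axis-aligned supports of $W^p_1$ and $W^p_2$ would coincide at $t=0$, and similarly for the diagonal pair $W^p_3,W^p_4$, so that the translation in the definition of $\varphi^k_\mu$ is introduced precisely to break these coincidences.
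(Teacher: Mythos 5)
Your proposal follows essentially the same route as the paper's proof: identify the supports with (lattice + time-translation)-shifted balls of radius $\tfrac{1}{\lambda\mu_1}$ coming from the translation offsets $\tfrac{k}{16}|\xi_k|^2$, and check pair by pair that these balls miss each other by a residual gap of order $\lambda^{-1}$, which cannot be bridged by $\tfrac{1}{\lambda\mu_1}$-thick neighbourhoods once $\mu_1$ exceeds a fixed constant. You phrase the case analysis via modular constraints in the scaled variables $y = \lambda x$, $\tau = \lambda\omega t$, whereas the paper works directly with the lattices $\tfrac{1}{\lambda}\Lambda_k^{-1}\Z^2$ and a $t$-parameterised line, but the two are equivalent and your offset values $\{\tfrac{1}{16},\tfrac{2}{16},\tfrac{6}{16},\tfrac{8}{16}\}$, gap size $\tfrac{1}{16}$, and resulting threshold on $\mu_1$ match the paper's computation. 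One genuine addition on your side: by taking $\Phi$ real-analytic on the open interior of its support (which is consistent with $\Phi$ being a smooth compactly supported odd bump — e.g.\ $\Phi(x) = x\,e^{-1/(a^2-x^2)}$ suitably scaled), you force $\supp \Phi'' = \supp \Phi''' = \supp \Phi''''$ exactly, so that the stated equality of the five supports is literally true; the paper's own proof only establishes the weaker containment of all five supports in the same small balls, which is in fact all that is used later (e.g.\ in the atom decomposition of Lemma~\ref{lemma: curl w}).
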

\begin{proof}
Looking at the definition, we see that the function $w_k$ (and also $w_k^c, w_k^{cc}, q_k$) is supported in small balls of radius $\frac{1}{\lambda\mu_1}$ around the points $\frac{1}{\lambda}\left((\frac{1}{2},\frac{1}{2})+\frac{k}{16}|\xi_k|^2e_1+\Z^2\right),$ i.e.
\begin{align*}
\supp w_k\subset B_{\frac{1}{\lambda\mu_1}}(0) +\frac{1}{\lambda}\left(\left(\frac{1}{2},\frac{1}{2}\right)+\frac{k}{16}|\xi_k|^2e_1+\Z^2\right).
\end{align*}
Therefore, for a fixed time $t$, we have since $W^p_k(x,t) = w_k\left(\Lambda_k\left(x-\omega t \frac{\xi_k}{|\xi_k|^2}\right)\right) \frac{\xi_k}{|\xi_k|}$
\begin{align*}  
\supp W_k^p(\cdot,t)\subset B_{\frac{1}{\lambda\mu_1}}(0)  +\frac{1}{\lambda}\Lambda_k^{-1}\left(\left(\frac{1}{2},\frac{1}{2}\right)+\frac{k}{16}|\xi_k|^2e_1+\Z^2\right) + \omega t\frac{\xi_k}{|\xi_k|^2},
\end{align*}
i.e. we calculate, using $\Lambda_k^{-1} = \frac{1}{2}\Lambda_k$,
\begin{align*}
\supp W_1^p(\cdot, t)&\subset  B_{\frac{1}{\lambda\mu_1}}(0) + \frac{1}{\lambda}\left(\frac{1}{2}, \frac{1}{2}\right)+ \frac{1}{\lambda}\frac{1}{16} \xi_1 + \frac{1}{\lambda}\Z^2 + \omega t (1,0),\\
\supp W_2^p(\cdot, t)&\subset  B_{\frac{1}{\lambda\mu_1}}(0) +  \frac{1}{\lambda}\left(\frac{1}{2}, \frac{1}{2}\right) + \frac{1}{\lambda}\frac{1}{8} \xi_2 +\frac{1}{\lambda}\Z^2 +  \omega t(0,1),\\
\supp W_3^p(\cdot, t)&\subset  B_{\frac{1}{\lambda\mu_1}}(0) + \frac{1}{\lambda} \left(\frac{1}{2}, 0\right) + \frac{1}{\lambda}\frac{3}{16} \xi_3 + \frac{1}{\lambda}\left(\frac{1}{2}\Z\right)^2 + \omega t \left(\frac{1}{2},\frac{1}{2}\right),\\
\supp W_4^p(\cdot, t)&\subset  B_{\frac{1}{\lambda\mu_1}}(0) + \frac{1}{\lambda} \left(0,-\frac{1}{2}\right) + \frac{1}{\lambda} \frac{1}{4}\xi_4 +  \frac{1}{\lambda}\left(\frac{1}{2}\Z\right)^2 + \omega t \left(\frac{1}{2},-\frac{1}{2}\right).
\end{align*}
One can now check by hand that the supports are disjoint. We do this for $W_2^p$ and $W_4^p$ as an example. Assume there is an $x\in \supp W_2^p(\cdot,t) \cap W_4^p(\cdot,t)$. Then there exists $y_1,y_2\in  B_{\frac{1}{\lambda\mu_1}}(0)$ and $k\in\Z^2$, $l\in (\frac{1}{2}\Z)^2$ such that
\begin{align*}
y_1 + \frac{1}{\lambda}\left(\frac{1}{2},\frac{1}{2}\right) + \frac{1}{\lambda}\frac{1}{8}\xi_2 + \frac{1}{\lambda} k + \omega t (0,1) = x =y_2 + \frac{1}{\lambda}\left(0,-\frac{1}{2}\right) + \frac{1}{\lambda}\frac{1}{4}\xi_4 + \frac{1}{\lambda} l + \omega t \left(\frac{1}{2},-\frac{1}{2}\right)
\end{align*}
or equivalently
\begin{align*}
\underbrace{y_1-y_2}_{\in  B_{\frac{2}{\lambda\mu_1}}(0)}  &= -\frac{1}{\lambda} \left(\frac{1}{2},1\right) + \lambda\left(\frac{2}{8},-\frac{3}{8}\right) + \frac{1}{\lambda}(l-k) + \omega t \left(\frac{1}{2},-\frac{3}{2}\right)\\
&=\underbrace{ -\frac{1}{\lambda} \left(\frac{1}{2},1\right)  + \frac{1}{\lambda}(l-k) }_{\in \frac{1}{\lambda}(\frac{1}{2}\Z)^2} +\frac{1}{\lambda}\left(\frac{1}{8},0\right) + \underbrace{\frac{1}{\lambda}\left(\frac{1}{8},-\frac{3}{8}\right) + \omega t \left(\frac{1}{2},-\frac{3}{2}\right)}_{\in \lbrace s (1,-3): s\in \R\rbrace}.
\end{align*}
But it is not difficult to see that  $0\notin \frac{1}{\lambda}(\frac{1}{2}\Z)^2 +  \frac{1}{\lambda}\left(\frac{1}{8},0\right) + \left\lbrace s(1,-3):s\in\R\right\rbrace$. Therefore, we can choose $\mu_1$ large enough such that $B_{\frac{2}{\lambda\mu_1}}(0)\cap \left(\frac{1}{\lambda}(\frac{1}{2}\Z)^2 +  \frac{1}{\lambda}\left(\frac{1}{8},0\right) + \left\lbrace s(1,-3):s\in\R\right\rbrace\right)=\emptyset$. This shows $\supp W_2^p(\cdot,t)\cap \supp W_4^p(\cdot,t) = \emptyset$.
\end{proof}

\section{The perturbations}\label{sec: perturbations}
Before we can define the perturbations, let us decompose the error $\Rtr$ in the following way. There are smooth functions $\Gamma_k$ with $|\Gamma_k|\leq 1$ such that for any matrix $A$ with $|A-I|<\frac{1}{8}$
\begin{align}\label{eq: decomposition}
A=\sum_k\Gamma_k^2(A)\frac{\xi_k}{|\xi_k|}\otimes\frac{\xi_k}{|\xi_k|},
\end{align}
see Section 5 in \cite{brue2021nonuniqueness}.
Let $\kappa\in\N$ such that
\begin{align}\label{wishlist: kappa1}
\|\Rtr(t)\|_{L^1(\R^2\setminus B_\kappa)}\leq \frac{\eta}{2}
\end{align}
for all $t\in[0,1]$. With condition \eqref{wishlist: kappa1}, our choice of $\kappa$ is set.  For $\varepsilon>0$ we further define
\begin{align*}
\gamma(t)&= \frac{e(t)(1-\frac{\delta}{2})-\int_{\R^2}|u_0|^2(x,t)\,\dx}{2\|\chi_\kappa\|^2_{L^2(\R^2)}},\\
\rho(x,t)&=10\sqrt{\varepsilon^2+|\Rtr(x,t)|^2} + \gamma(t),\\
a_k(x,t)&=\chi_\kappa(x)\rho^\frac{1}{2}(x,t)\Gamma_k\left(I+\frac{\Rtr(x,t)}{\rho(x,t)}\right),
\end{align*}
noting that the decomposition \eqref{eq: decomposition} exists for $I+\frac{\Rtr}{\rho}$. The function $\chi_\kappa$ is a smooth cutoff with $\chi_\kappa\equiv 1$ on $B_\kappa$ and $\chi_\kappa\equiv 0$ on $\R^2\setminus B_{\kappa+1}$. For later use, we note that
\begin{align}\label{eq: R coeff}
\chi_\kappa^2(x)\rho(x,t) I + \chi_\kappa^2(x) \Rtr(x,t) = \sum_k a_k^2(x,t)\frac{\xi_k}{|\xi_k|}\otimes\frac{\xi_k}{|\xi_k|}.
\end{align}
We define
\begin{align*}
H^k(x,t) = \frac{a_k(x,t)}{|\xi_k|} w_k^{cc}\left(\Lambda_k \left(x- \omega t\frac{\xi_k}{|\xi_k|^2}\right)\right).
\end{align*}
Let us define the perturbations as follows. 
\begin{align*}
w(x,t) &= \sumk \nabla^\perp H^k(x,t),\\
u^t(x,t)&= -\sumk\mathbb{P}\left(a_k^2(x,t)  Y_k(x,t)\right),\\
v(x,t)&=\mathbb{P}\int_0^t r_0(x,s)\,\ds.
\end{align*}
We note that 
\begin{align}\label{residual divergence}
\dv w= 0,
\end{align}
being an orthogonal gradient.
We set $$u_{1} = u_0 +w+ u^t + v.$$
By a simple calculation, we see that 
\begin{align*}
\nabla^\perp H^k (x,t)& = a_k(x,t) W_k^p(x,t) + a_k(x,t) W_k^c(x,t) + w_k^{cc}\left(\Lambda_k\left( x-\omega t\frac{\xi_k}{|\xi_k|^2}\right)\right) \frac{\nabla^\perp a_k(x,t)}{|\xi_k|}\\
&=a_k(x,t) W_k^p(x,t) + a_k(x,t) W_k^c(x,t)\\
&\hspace{0,3cm} + \frac{\langle\nabla^\perp a_k(x,t)\cdot\xi_k\rangle}{|\xi_k|^2} W_k^{cc,\parallel} (x,t)  + \frac{\langle\nabla^\perp a_k(x,t)\cdot\xi^\perp_k\rangle}{|\xi_k|^2} W_k^{cc,\perp} (x,t)
\end{align*}
and we set $w= u^p + u^c$ with
\begin{align}
u^p(x,t)&=\sumk  a_k(x,t) W_k^p(x,t),\nonumber\\
u^c(x,t) &= \sumk  a_k(x,t) W_k^c(x,t) +b_k^1(x,t) W_k^{cc,\parallel} (x,t)  + b_k^2(x,t) W_k^{cc,\perp} (x,t)\label{decomposition of uc}
\end{align}
where we denote
\begin{align*}
b_k^1(x,t) &= \frac{\langle\nabla^\perp a_k(x,t)\cdot\xi_k\rangle}{|\xi_k|^2},\\
b_k^2(x,t) &= \frac{\langle\nabla^\perp a_k(x,t)\cdot\xi^\perp_k\rangle}{|\xi_k|^2}.
\end{align*}
\begin{rem}
We note several things:
\begin{enumerate}
\item $a_k(x,t)$: Decomposition of the old error $R_0$, also pumping energy into the system.
\item $\mathbb{P}$ denotes the Leray projector.
\item Note that $\dv( w + u^t+v) = 0$ and thus also $\dv u_1 = 0$.
\item We will sometimes use $w=\sum_k \nabla^\perp  H^k$ as a whole and use estimates on $H^k$, whereas on other occasions we have to decompose $w= u^p + u^c$ and use certain properties of the individual parts.
\end{enumerate}
\end{rem}

\begin{lemma}
The function $u_1$ is smooth with $u_1\in C([0,1],L^2(\R^2)\cap L^3(\R^2))$, i.e. $u_1$ has the desired regularity.
\end{lemma}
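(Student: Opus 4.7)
My plan is to decompose $u_1 = u_0 + w + u^t + v$ and verify smoothness and $C_t(L^2_x \cap L^3_x)$ membership for each summand separately. Since $u_0$ already has the required regularity by hypothesis (being a solution to the Reynolds defect equation), only $w$, $u^t$, and $v$ need attention.

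For $w = \sum_k \nabla^\perp H^k$: First I would verify that the coefficients $a_k$ are genuinely smooth. The key observation is that $\rho \geq 10\varepsilon > 0$ uniformly, so $\rho^{1/2}$ is smooth, and moreover $|\Rtr|/\rho \leq 1/10 < 1/8$, which guarantees that $I + \Rtr/\rho$ lies in the neighbourhood of the identity on which the decomposition \eqref{eq: decomposition} applies, so $\Gamma_k(I + \Rtr/\rho)$ is smooth. The cutoff $\chi_\kappa$ then forces $\supp_x a_k(\cdot,t) \subset B_{\kappa+1}$ uniformly in $t$. Since $w_k^{cc} \circ \Lambda_k$ is smooth, $H^k$ is smooth in $(t,x)$ with $x$-support inside $B_{\kappa+1}$, and the same is true for $\nabla^\perp H^k$. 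Compact support and smoothness give $w \in C([0,1], L^2 \cap L^3)$ at once.

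For $u^t = -\sum_k \mathbb{P}(a_k^2 Y_k)$ and $v = \mathbb{P}\int_0^t r_0(\cdot,s)\,\ds$: the arguments $a_k^2 Y_k$ and $\int_0^t r_0(\cdot,s)\,\ds$ are smooth and compactly supported in $x$, uniformly in $t$ — the first by the previous paragraph, the second because $\supp_{(t,x)} r_0$ is compact and the $x$-integral is smooth by differentiation under the integral sign. To handle the Leray projector on $\R^2$ I would write $\mathbb{P} f = f - \nabla\Delta^{-1}\dv f$: for $f \in C^\infty_c(\R^2)$, $\Delta^{-1}\dv f$ is the convolution of the smooth compactly supported function $\dv f$ with the 2D Newton kernel $-\tfrac{1}{2\pi}\log|x|$, which is smooth on $\R^2$. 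For the $L^p$ bounds I would invoke the standard Calder\'on--Zygmund boundedness of $\mathbb{P}$ on $L^p(\R^2)$ for $1 < p < \infty$, giving $u^t(\cdot,t), v(\cdot,t) \in L^2 \cap L^3$. Since $\mathbb{P}$ is time-independent and the inputs depend continuously on $t$ in $L^2 \cap L^3$, so do $u^t$ and $v$.

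The only real subtlety — the one genuine difference from the standard periodic setting — is that $\mathbb{P}$ does not preserve compact support on $\R^2$: the output of $\mathbb{P}$ applied to a smooth compactly supported function typically only decays algebraically at infinity. This means one cannot just reduce $u^t$ and $v$ to manifestly $L^p$ functions by support considerations. However, Calder\'on--Zygmund furnishes the $L^p$ bounds for every $1 < p < \infty$ directly, so this is not a real obstruction to the claim.
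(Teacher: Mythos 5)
Your proof is correct and takes essentially the same route as the paper: decompose $u_1 = u_0 + w + u^t + v$, observe that $w$ is smooth with compact support, and handle $u^t, v$ via the $L^s$-boundedness of $\mathbb{P}$ for $1<s<\infty$ applied to smooth, compactly supported inputs. You simply spell out a few details the paper leaves implicit, namely the lower bound $\rho \geq 10\varepsilon$ and the estimate $|\Rtr|/\rho \leq 1/10 < 1/8$ that make $a_k$ smooth, and the remark that $\mathbb{P}$ does not preserve compact support on $\R^2$ so that the Calder\'on--Zygmund bound is genuinely needed.
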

\begin{proof}
The function $u_0$ is smooth and in $ C([0,1],L^2(\R^2)\cap L^3(\R^2))$ by assumption. For $w$  this is also clear since it is smooth with compact support. For $u^t$, we note that $\mathbb{P}:L^s(\R^2)\rightarrow L^s(\R^2)$ is a bounded operator for all $1<s<\infty$. Since the function inside $\mathbb{P}$ in the definition of $u^t$ is smooth and compactly supported and therefore in $C([0,1],L^2(\R^2)\cap L^3(\R^2))$, this also holds for $u^t.$ By assumption, $r_0$ is smooth and $r_0\in C([0,1],L^\infty(\R^2))$ with compact support in space, in particular also $r_0\in C([0,1],L^2(\R^2)\cap L^3(\R^2))$ and therefore also $v\in  C([0,1],L^2(\R^2)\cap L^3(\R^2))$ by the boundedness of $\mathbb{P}$.
\end{proof}

\section{Estimates of the perturbations}\label{sec: estimates perturbations}\label{sec. estimates of perturbations}
In this section, we provide the necessary estimates on the perturbations. We start with a preliminary estimate on the coefficients $a_k$ and then estimate the individual parts of the perturbations separately. After that, we obtain an estimate on the energy increment and conclude the section by fixing the parameter $\varepsilon$.
\begin{lemma}[Preliminary estimates I]
It holds
\begin{align}
\|a_k\|_{C^l(\R^2\times [0,1])}&\leq C(R_0, u_0, e,\delta,\kappa,\varepsilon,l),\label{est ak}
\end{align}
and
\begin{align}\label{eq: a_k in L2}
\|a_k(\cdot,t)\|_{L^2(\R^2)}\leq \sqrt{10\pi}\left((\kappa+1)\varepsilon^\frac{1}{2} + \delta^\frac{1}{2}\right)
\end{align}
uniformly in $t$.
\end{lemma}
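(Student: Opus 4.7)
The plan is to read both estimates directly off the explicit formula
$$a_k(x,t)=\chi_\kappa(x)\rho^{1/2}(x,t)\Gamma_k\!\left(I+\tfrac{\Rtr(x,t)}{\rho(x,t)}\right).$$
Two preliminary observations drive everything. First, $\rho(x,t)\geq 10\varepsilon>0$ pointwise, so $\rho$ and $\rho^{1/2}$ are smooth with all derivatives controlled in terms of $\varepsilon^{-1}$ and $\|\Rtr\|_{C^l}$. Second, $|I+\Rtr/\rho-I|=|\Rtr|/\rho\leq 1/10<1/8$, which keeps the argument of $\Gamma_k$ inside the open set on which the smooth decomposition \eqref{eq: decomposition} lives. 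Combined with the smooth compactly supported cutoff $\chi_\kappa$ and the smoothness of $\gamma(t)$ (inherited from the regularity of $e$ and $u_0$), this immediately yields \eqref{est ak} by the Leibniz and chain rules, with a constant depending on $\varepsilon^{-1}$, $\|R_0\|_{C^l}$, $\|u_0\|_{C^l}$, $\|e\|_{C^l}$, $\kappa$ and $\delta$.

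For \eqref{eq: a_k in L2} I would use $|\Gamma_k|\leq 1$ to obtain the pointwise bound $a_k^2\leq \chi_\kappa^2\rho$, and then split using $\sqrt{\varepsilon^2+|\Rtr|^2}\leq \varepsilon+|\Rtr|$ to write
$$a_k^2(x,t)\leq 10\varepsilon\,\chi_\kappa^2(x)+10\,\chi_\kappa^2(x)|\Rtr(x,t)|+\gamma(t)\chi_\kappa^2(x).$$
Taking square roots via $\sqrt{\alpha+\beta+\gamma}\leq \sqrt{\alpha}+\sqrt{\beta}+\sqrt{\gamma}$ and then the $L^2$ norm, the three summands contribute respectively $\sqrt{10\varepsilon}\,\|\chi_\kappa\|_{L^2}$, $(10\|\chi_\kappa^2\Rtr\|_{L^1})^{1/2}$, and $\gamma^{1/2}\|\chi_\kappa\|_{L^2}$. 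Since $\supp\chi_\kappa\subset B_{\kappa+1}$, the first is bounded by $\sqrt{10\pi}\,(\kappa+1)\varepsilon^{1/2}$. The third equals $\bigl((e(t)(1-\delta/2)-\int|u_0|^2)/2\bigr)^{1/2}$, which by \eqref{eq: assumption energy} is at most $\sqrt{3\delta/8}$. The second is bounded by $(10\|R_0\|_{C_tL^1_x})^{1/2}\leq \sqrt{\delta/128}$ thanks to \eqref{eq: assumption r} and the elementary inequality $|\Rtr|\leq |R_0|$ for symmetric $R_0$.

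The only slightly delicate point is the final numerology: one checks that $\sqrt{3/8}+\sqrt{1/128}<\sqrt{10\pi}$, so the second and third contributions are comfortably absorbed into $\sqrt{10\pi}\,\delta^{1/2}$, yielding precisely the stated bound. This is exactly where the coefficient $10$ built into the definition of $\rho$ plays its role, by making $\gamma$ proportionally small relative to $\rho$. No further estimates or auxiliary results are needed; both bounds are a direct consequence of the explicit construction.
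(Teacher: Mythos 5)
Your proof is correct and follows essentially the same route as the paper: both use $|\Gamma_k|\le 1$ to reduce to bounding $\chi_\kappa^2\rho$, split $\rho$ via $\sqrt{\varepsilon^2+|\Rtr|^2}\le\varepsilon+|\Rtr|$, and invoke \eqref{eq: assumption energy} and \eqref{eq: assumption r} for the $\gamma$ and $R_0$ contributions. The only cosmetic difference is that you take pointwise square roots before integrating (letting the $\|\chi_\kappa\|_{L^2}$ in the definition of $\gamma$ cancel directly), whereas the paper first bounds $\int a_k^2$ and then takes the square root of the sum, using $\|\chi_\kappa\|_{L^2}^2\ge\pi\kappa^2$ along the way; both arrive at the same numerical slack under $\sqrt{10\pi}$.
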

\begin{proof}
For the first part, we only note that by \eqref{eq: assumption energy}
\begin{align*}
0\leq e(t)\left(1-\frac{\delta}{2}\right) - \int_{\R^2} |u_0|^2(x,t)\,\dx  \leq \frac{3}{4}\delta,
\end{align*}
so we have $0\leq\gamma(t)\leq\frac{3}{4}\delta/(2|B_\kappa|)$. This together with the definition of $a_k$ implies the $L^\infty$-estimates.  For the second part, we calculate using $|\Gamma_k|\leq 1$
\begin{align*}
\int_{\R^2} a_k^2(x,t) \,\dx &= \int_{\R^2}\chi_\kappa^2(x)\rho(x,t) \Gamma_k^2\left(I+\frac{\Rtr(x,t)}{\rho(x,t)}\right)\,\dx \leq \int_{B_{\kappa+1}} 10\sqrt{\varepsilon^2+|\Rtr(t)|^2(x,t)}+\gamma(t)  \,\dx\\
& \leq 10\pi(\kappa+1)^2\varepsilon + 20\|R_0\|_{C_tL^1_x} + 10 \pi (\kappa+1)^2 \gamma\\
&\leq 10\pi(\kappa+1)^2\varepsilon + 20\|R_0\|_{C_tL^1_x} + 5\pi \frac{(\kappa+1)^2}{\|\chi_\kappa\|^2_{L^2(\R^2)}}\left(e(t)(1-\frac{\delta}{2}) - \int_{\R^2} |u_0|^2(x,t)\,\dx\right).
\end{align*}
Using again \eqref{eq: assumption energy} and 
$$5\pi\frac{(\kappa+1)^2}{\|\chi_\kappa\|^2_{L^2(\R^2)}}\leq  5\frac{(\kappa+1)^2}{\kappa^2}\leq 20,$$ 
and also that $20\|R_0\|_{C_tL^1_x}\leq \delta$ by assumption,
we obtain
\begin{align}\label{eq: a_k in L2 quadr}
\int_{\R^2} a_k^2(x,t) \,\dx \leq 10\pi(\kappa+1)^2\varepsilon   + 16\delta.
\end{align}
From this \eqref{eq: a_k in L2} follows.
\end{proof}

\begin{lemma}[Estimate of the principal perturbation]\label{lemma: estimate up}
It holds
\begin{align*}
\|u^p(t)\|_{L^s(\R^2)}&\leq C(R_0,u_0,e,\delta, \kappa,\varepsilon)\mu_1^{\frac{1}{2}-\frac{1}{s}}\mu_2^{\frac{1}{2}-\frac{1}{s}}
\end{align*}
and for $p=2$  more refined
\begin{align}
\|u^p(t)\|_{L^2(\R^2)}&\leq \sqrt{10\pi}\left((\kappa+1)\varepsilon^\frac{1}{2} + \delta^\frac{1}{2}\right) +\frac{ C(R_0,u_0,e,\delta,\kappa,\varepsilon)}{\lambda^\frac{1}{2}}\label{wishlist vareps 2}
\end{align}
uniformly in $t$.
\end{lemma}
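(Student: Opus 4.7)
The plan is to prove the two estimates separately. For the general $L^s$ bound I would use the triangle inequality to reduce to bounding each term $\|a_k W_k^p(\cdot,t)\|_{L^s(\R^2)}$ individually. Since $a_k$ is supported in $B_{\kappa+1}$ (due to the cutoff $\chi_\kappa$) and satisfies the $C^1$ bound \eqref{est ak}, and since $W_k^p(\cdot, t)$, after undoing the rotation $\Lambda_k$ and the time translation by $\omega t$, is a fast-oscillating periodic function of the form $g_\lambda$ whose $L^s(\tor)$ norm is $\|w_k\|_{L^s(\tor)} \leq C(s) \mu_1^{1/2-1/s}\mu_2^{1/2-1/s}$ by Proposition \ref{prop: building blocks}(iii) combined with Lemma \ref{lemma: small w est}, applying the improved Hölder inequality (Proposition \ref{prop: improved hoelder}) with $f = a_k$ and support parameter $\kappa+1$ directly produces the claimed $L^s$ bound after absorbing the $C^1$ norms of $a_k$ into the constant.

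For the refined $L^2$ bound, the key ingredient is the disjointness of supports from Lemma \ref{lemma: supports}. Expanding
\begin{align*}
\|u^p(t)\|_{L^2(\R^2)}^2 = \sum_{k,k'} \int_{\R^2} a_k a_{k'}\, W_k^p \cdot W_{k'}^p \,\dx,
\end{align*}
every cross term with $k \neq k'$ vanishes pointwise since $\supp W_k^p \cap \supp W_{k'}^p = \emptyset$, leaving $\sum_k \int a_k^2 |W_k^p|^2 \,\dx$. Since $|W_k^p|^2 = w_k^2(\Lambda_k x - \omega t e_1)$ is a rescaled periodic function whose average over one cell equals $\|w_k\|_{L^2(\tor)}^2 = 1$ (Lemma \ref{lemma: small w est}), one writes $|W_k^p|^2 = 1 + h_k$ with $h_k$ mean-zero and periodic. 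Because $\xi_k \in \N^2$ the rotation $\Lambda_k$ has integer entries, so $h_k$ is genuinely of the form $\tilde h_{k,\lambda}$ required by Lemma \ref{lemma: fast oscillations}. That lemma then yields
\begin{align*}
\int_{\R^2} a_k^2 |W_k^p|^2 \,\dx = \int_{\R^2} a_k^2 \,\dx + O\!\left(\frac{(\kappa+1)^2 \|a_k^2\|_{C^1}}{\lambda}\right).
\end{align*}

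To handle the leading contribution, I would sum over $k$ and use the trace of the decomposition \eqref{eq: R coeff}, namely $\sum_k a_k^2 = 2\chi_\kappa^2 \rho$, together with the definition of $\rho$ and $\gamma$ and the assumptions \eqref{eq: assumption energy}, \eqref{eq: assumption r} (which control $\|R_0\|_{L^1}$ by $\delta$ and $\gamma \|\chi_\kappa\|_{L^2}^2$ by $\tfrac{3}{8}\delta e$). This gives an explicit bound on $\sum_k \int a_k^2 \,\dx$ compatible with the target constant, as already observed in \eqref{eq: a_k in L2} / \eqref{eq: a_k in L2 quadr}. Taking square roots and using $\sqrt{a+b} \leq \sqrt{a} + \sqrt{b}$ then produces the claimed bound $\sqrt{10\pi}((\kappa+1)\varepsilon^{1/2} + \delta^{1/2}) + C/\lambda^{1/2}$.

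The main obstacle I anticipate is bookkeeping rather than conceptual: verifying that, after composition with $\Lambda_k$ and translation by $\omega t$, the function $|W_k^p|^2 - 1$ really has the structure of a rescaled, mean-zero torus function to which Lemma \ref{lemma: fast oscillations} applies, and tracking the constants tightly enough to land on $\sqrt{10\pi}$ (as opposed to a larger multiple). All of the estimates on $\|a_k^2\|_{C^1}$ appearing in the error terms can be swept into the constant $C(R_0, u_0, e, \delta, \kappa, \varepsilon)$ via \eqref{est ak}.
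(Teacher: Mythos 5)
Your proposal is correct and its overall strategy matches the paper's, with two small variations worth noting. For the general $L^s$ bound the paper proceeds more directly than you do: it simply uses $\|a_k W^p_k(\cdot,t)\|_{L^s(\R^2)} \leq \|a_k\|_{L^\infty}\|W^p_k(\cdot,t)\|_{L^s([-\kappa-1,\kappa+1]^2)}$ with the $L^\infty$ bound from \eqref{est ak} and the $L^s$ scaling from Proposition \ref{prop: building blocks}; invoking the improved H\"older inequality there, as you propose, also works but is more machinery than the estimate needs (and one then has to observe the error term is dominated by the leading one, which it is). For the refined $L^2$ bound the paper applies Proposition \ref{prop: improved hoelder} to each $a_k W^p_k$ and then inserts the $L^2$ bound on $a_k$ from \eqref{eq: a_k in L2}, whereas you expand $\|u^p\|_{L^2}^2$, kill cross terms by Lemma \ref{lemma: supports}, replace $\int a_k^2|W^p_k|^2$ by $\int a_k^2 + O(1/\lambda)$ via Lemma \ref{lemma: fast oscillations}, and then control $\sum_k\int a_k^2 = 2\int\chi_\kappa^2\rho$ via the trace of \eqref{eq: R coeff}. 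This is effectively re-deriving the improved H\"older from its underlying oscillation lemma and is entirely equivalent; in fact it is the same computation the paper carries out later in Lemma \ref{lemma: energy inc}. Two minor caveats: your route through the trace identity produces $\sqrt{20\pi}$ rather than $\sqrt{10\pi}$ as the leading constant, while the paper's printed proof, which sums four individual H\"older bounds each already worth $\sqrt{10\pi}((\kappa+1)\varepsilon^{1/2}+\delta^{1/2})$, is also not literally tight; this looseness is harmless, since the only thing the iteration needs (after fixing $\varepsilon$) is a universal constant times $\delta^{1/2}$. And you were right to flag the structural concern about $\Lambda_k$: since each $\xi_k\in\N^2$, $\Lambda_k$ has integer entries, so $|W^p_k|^2-1$ genuinely has the form $g(\lambda\cdot)$ with $g\in C^\infty_0(\tor)$ required by Lemma \ref{lemma: fast oscillations}.
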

\begin{proof}
For the first estimate, we use Proposition \ref{prop: building blocks}, \eqref{est ak} and the fact that $u^p$ is supported in $B_{\kappa+1}$. For the second estimate, we use Proposition \ref{prop: improved hoelder}, noting again that $\supp a_k(\cdot,t)\subset [-\kappa-1,\kappa+1]^2$, Lemma \ref{lemma: small w est}, Proposition \ref{prop: building blocks} and \eqref{eq: a_k in L2}
\begin{align*}
\|\sumk a_k(\cdot,t) W^p_k(\cdot,t)\|_{L^2(\R^2)} &=\sumk\|a_k(\cdot,t) W^p_k(\cdot,t)\|_{L^2([-\kappa-1,\kappa+1]^2)}\\
&\leq \|a_k(\cdot,t)\|_{L^2(\R^2)} \|W^p_k(\cdot,t)\|_{L^2(\tor)}\\
&\hspace{0,3cm} + C \frac{2\kappa+2}{\lambda^\frac{1}{2}}\|a_k(\cdot,t)\|_{C^1(\R^2)}\|W^p_k(\cdot,t)\|_{L^2(\tor)} \\
&\leq \sqrt{10\pi}\left((\kappa+1)\varepsilon^\frac{1}{2} + \delta^\frac{1}{2}\right) + \frac{C(R_0,u_0, e,\kappa,\delta,\kappa,\varepsilon)}{\lambda^\frac{1}{2}}.\qedhere
\end{align*}.
\end{proof}

\begin{lemma}[Estimates of the correctors]\label{lemma: estimate correctors}
We have
\begin{align*}
\|u^c(t)\|_{L^s(\R^2)}&\leq C(R_0,u_0,e,\delta,\kappa,\varepsilon) \mu_1^{\frac{3}{2}-\frac{1}{s}}\mu_2^{-\frac{1}{2}-\frac{1}{s}}
\end{align*}
and
\begin{align*}
\|u^t(t)\|_{L^2(\R^2)} &\leq C(R_0,u_0,e,\delta,\kappa,\varepsilon) \frac{\mu_1^\frac{1}{2}\mu_2^\frac{1}{2}}{\omega}
\end{align*}
uniformly in $t$.
\end{lemma}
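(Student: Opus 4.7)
The proof is essentially a direct computation, splitting $u^c$ into its three pieces from \eqref{decomposition of uc} and $u^t$ into its summands, and applying the building-block estimates from Proposition \ref{prop: building blocks} term by term. The key inputs are: (a) all coefficients $a_k$, and by differentiation $b_k^1, b_k^2$, are bounded in $C^l(\R^2\times [0,1])$ by constants of the form $C(R_0, u_0, e, \delta, \kappa, \varepsilon)$ via \eqref{est ak}; (b) the supports of $a_k$ and $b_k^j$ are contained in $B_{\kappa+1}\subset [-\kappa-1,\kappa+1]^2$, so we only need $L^s$ bounds of the building blocks on this compact set; (c) the Leray projector $\mathbb{P}$ is bounded on $L^s(\R^2)$ for all $1<s<\infty$.

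For $u^c$, I would estimate each of the three pieces separately. For the term $a_k W_k^c$, Hölder gives
\[
\|a_k W_k^c(\cdot,t)\|_{L^s(\R^2)} \leq \|a_k\|_{L^\infty}\, \|W_k^c(\cdot,t)\|_{L^s([-\kappa-1,\kappa+1]^2)} \leq C(R_0,u_0,e,\delta,\kappa,\varepsilon)\, \mu_1^{\frac{3}{2}-\frac{1}{s}}\mu_2^{-\frac{1}{2}-\frac{1}{s}},
\]
directly from Proposition \ref{prop: building blocks} with $l=0$. For the $b_k^j W_k^{cc,\cdot}$ terms, since $b_k^j$ involves $\nabla a_k$ which is still bounded by $C(R_0,u_0,e,\delta,\kappa,\varepsilon)$ by \eqref{est ak}, and since the building-block estimate yields
\[
\|W_k^{cc,\parallel}(\cdot,t)\|_{L^s([-\kappa-1,\kappa+1]^2)},\ \|W_k^{cc,\perp}(\cdot,t)\|_{L^s([-\kappa-1,\kappa+1]^2)} \leq C(\kappa)\, \lambda^{-1} \mu_1^{\frac{1}{2}-\frac{1}{s}}\mu_2^{-\frac{1}{2}-\frac{1}{s}},
\]
these contributions are a factor $\lambda^{-1}\mu_1^{-1}$ smaller than the leading $a_k W_k^c$ term and hence absorbed into the stated bound (since $\lambda, \mu_1 \gg 1$).

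For $u^t$, I would first use the $L^2$-boundedness of $\mathbb{P}$ to reduce to
\[
\|u^t(\cdot,t)\|_{L^2(\R^2)} \leq \sum_k \|a_k^2(\cdot,t) Y_k(\cdot,t)\|_{L^2(\R^2)}.
\]
Using $\|a_k\|_{L^\infty}^2\leq C(R_0,u_0,e,\delta,\kappa,\varepsilon)$ together with the support restriction $\supp a_k \subset [-\kappa-1, \kappa+1]^2$ and the building block estimate from Proposition \ref{prop: building blocks} (with $s=2$, $l=0$)
\[
\|Y_k(\cdot,t)\|_{L^2([-\kappa-1,\kappa+1]^2)} \leq C(\kappa)\, \omega^{-1} \mu_1^{1/2} \mu_2^{1/2},
\]
one immediately obtains the desired bound. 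There is no real obstacle here: all estimates are routine once one has (a) the boundedness of the Leray projector for the $u^t$ bound, and (b) the restriction of the support of $a_k$ to the compact set $B_{\kappa+1}$, which converts the periodic $L^s(\T^2)$ estimates of Proposition \ref{prop: building blocks} into $L^s(\R^2)$ estimates with a $\kappa$-dependent constant absorbed into $C(R_0,u_0,e,\delta,\kappa,\varepsilon)$.
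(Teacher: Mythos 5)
Your argument is correct and follows exactly the route the paper indicates: Hölder with the $L^\infty$ bound on $a_k$ (and its derivatives entering $b_k^j$) from \eqref{est ak}, the $L^s([-\kappa-1,\kappa+1]^2)$ building-block bounds from Proposition \ref{prop: building blocks}, the compact support to pass to $L^s(\R^2)$, and the $L^2$-boundedness of the Leray projector for $u^t$. You have merely spelled out the brief justification that the paper states in one line, including the correct observation that the $W_k^{cc}$ contributions are smaller by a factor $\lambda^{-1}\mu_1^{-1}$ and hence absorbed.
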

\begin{proof}
This proof follows by using Proposition \ref{prop: building blocks} together with \eqref{est ak} and the fact that $u^c$ is supported in $[-\kappa-1,\kappa+1]^2$. For $u^t$, we also use that $\mathbb{P}$ is bounded from $L^2$ to $L^2$ and the argument inside $\mathbb{P}$ in the definition of $u^t$ is supported in $[-\kappa-1,\kappa+1]^2$.
\end{proof}

\begin{lemma}\label{lemma: estimate H}
It holds
\begin{align*}
\|\nabla^l H^k(t)\|_{L^s(\tor)} \leq C(R_0, u_0, e,\delta,\kappa,\varepsilon)\lambda^{l-1} \mu_{1}^{\frac{1}{2}-\frac{1}{s}}\mu_2^{l-\frac{1}{2}-\frac{1}{s}}
\end{align*}
uniformly in $t$.
\end{lemma}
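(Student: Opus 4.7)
The plan is to apply the Leibniz rule to $H^k = \tfrac{1}{|\xi_k|} a_k \cdot \widetilde w_k^{cc}$, where $\widetilde w_k^{cc}(x,t) := w_k^{cc}\bigl(\Lambda_k(x-\omega t\,\xi_k/|\xi_k|^2)\bigr)$, and to handle the two factors separately: the coefficient $a_k$ by the $C^l$ bound \eqref{est ak}, and $\widetilde w_k^{cc}$ by pulling back to $w_k^{cc}$ via the chain rule and invoking the small-scale estimates from Lemma \ref{lemma: small w est}. Because $a_k(\cdot,t)$ is supported in $[-\kappa-1,\kappa+1]^2$ (via the cutoff $\chi_\kappa$), so is $H^k$, and the periodic estimates on $w_k^{cc}$ transfer to $L^s$ estimates on this fixed compact set up to a factor depending only on $\kappa$, which is absorbed into the constant $C(R_0,u_0,e,\delta,\kappa,\varepsilon)$.

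Writing
\[
\nabla^l H^k = \frac{1}{|\xi_k|}\sum_{j=0}^{l}\binom{l}{j}\nabla^j a_k \,\otimes\, \nabla^{l-j}\widetilde w_k^{cc},
\]
I would bound $\|\nabla^j a_k\|_{L^\infty} \le C(R_0,u_0,e,\delta,\kappa,\varepsilon,j)$ by \eqref{est ak}, and for the second factor use the chain rule together with the fact that $\Lambda_k$ is a fixed linear map with bounded norm, so
\[
\|\nabla^{l-j}\widetilde w_k^{cc}(\cdot,t)\|_{L^s(\tor)} \le C \max_{l_1+l_2=l-j}\|\partial_1^{l_1}\partial_2^{l_2} w_k^{cc}\|_{L^s(\tor)}.
\]
By Lemma \ref{lemma: small w est} this last quantity is bounded by $C(s)\lambda^{l-j-1}\mu_1^{l_1+\frac12-\frac1s}\mu_2^{l_2-\frac12-\frac1s}$; since $\mu_2\gg\mu_1$ the worst choice is $l_1=0$, $l_2=l-j$, giving the bound $C\lambda^{l-j-1}\mu_1^{\frac12-\frac1s}\mu_2^{l-j-\frac12-\frac1s}$.

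The dominant term in the Leibniz sum is $j=0$, which yields exactly $C(R_0,u_0,e,\delta,\kappa,\varepsilon)\,\lambda^{l-1}\mu_1^{\frac12-\frac1s}\mu_2^{l-\frac12-\frac1s}$; all other terms $j\ge 1$ come with strictly smaller powers of $\lambda$ and $\mu_2$ (both of which are $\gg 1$) and are therefore absorbed. This yields the claimed estimate, uniformly in $t$. The only mildly delicate point is bookkeeping the interplay between the concentration scales $\mu_1,\mu_2$ under the anisotropic derivatives $\partial_1^{l_1}\partial_2^{l_2}$; but thanks to $\mu_2\gg\mu_1$, the extremal configuration is clear, and no cancellation or refined structure of $w_k^{cc}$ beyond Lemma \ref{lemma: small w est} is needed.
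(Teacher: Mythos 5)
Your proof is correct and takes essentially the same approach as the paper, which simply states that the estimate "follows immediately from Lemma \ref{lemma: small w est}, \eqref{est ak} and the definition of $H^k$, exploiting also the fact that $\mu_2\gg\mu_1$"; you have spelled out the Leibniz-rule bookkeeping and the identification of $l_1=0,\,l_2=l-j,\,j=0$ as the extremal term, which is exactly what the paper's terse proof leaves to the reader.
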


\begin{proof}
This follows immediately from Lemma \ref{lemma: small w est}, \eqref{est ak} and the definition of $H^k$, exploiting also the fact that $\mu_2\gg\mu_1$.
\end{proof}

\begin{lemma}\label{lemma: v in L2}
It holds for all $s\in (1,\infty)$
\begin{align*}
\|v(t)\|_{L^s(\R^2)}\leq \|\mathbb{P}\|_{\mathcal{L}(L^s(\R^2))}\|r_0\|_{C_tL^s_x}
\end{align*}
and for $s=2$
\begin{align*}
\|v(t)\|_{L^2(\R^2)}\leq \|r_0\|_{C_tL^2_x}
\end{align*}
uniformly in $t$.
\end{lemma}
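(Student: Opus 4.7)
The plan is to exploit two standard facts: the boundedness of the Leray projector $\mathbb{P}$ on $L^s(\R^2)$ for $1<s<\infty$, and Minkowski's integral inequality, which lets us interchange the spatial $L^s$-norm with the time integration. Since $v(t) = \mathbb{P}\int_0^t r_0(\cdot,s)\,\ds$, I would first bring the projector out of the norm, then pull the $L^s$ norm inside the $s$-integral, and finally bound the integrand uniformly by the $C_tL^s_x$-norm, using that $t\in[0,1]$ so that the time integration contributes at most a factor of $1$.

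Concretely, the chain of estimates I have in mind is
\begin{align*}
\|v(t)\|_{L^s(\R^2)}
&= \left\|\mathbb{P}\int_0^t r_0(\cdot,s')\,\mathrm{d}s'\right\|_{L^s(\R^2)} \\
&\leq \|\mathbb{P}\|_{\mathcal{L}(L^s(\R^2))}\left\|\int_0^t r_0(\cdot,s')\,\mathrm{d}s'\right\|_{L^s(\R^2)} \\
&\leq \|\mathbb{P}\|_{\mathcal{L}(L^s(\R^2))}\int_0^t \|r_0(\cdot,s')\|_{L^s(\R^2)}\,\mathrm{d}s' \\
&\leq \|\mathbb{P}\|_{\mathcal{L}(L^s(\R^2))}\,t\,\|r_0\|_{C_tL^s_x} \leq \|\mathbb{P}\|_{\mathcal{L}(L^s(\R^2))}\|r_0\|_{C_tL^s_x}.
\end{align*}
The only nontrivial ingredient is Minkowski's integral inequality on the second line, which is applicable because $r_0\in C([0,1];L^\infty(\R^2))$ has compact $(t,x)$-support, so $r_0$ belongs to $L^s(\R^2 \times [0,t])$ for any $1 \leq s \leq \infty$, and the necessary measurability/integrability hypotheses trivially hold.

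For the refined bound when $s=2$, I simply observe that the Leray projector $\mathbb{P}$ is an orthogonal projection on $L^2(\R^2)$ onto the subspace of divergence-free fields, hence $\|\mathbb{P}\|_{\mathcal{L}(L^2(\R^2))}=1$, which converts the general estimate into $\|v(t)\|_{L^2(\R^2)}\leq \|r_0\|_{C_tL^2_x}$. There are no real obstacles here; the statement is essentially a packaging of Minkowski plus Calderón–Zygmund boundedness of $\mathbb{P}$, and the $s=2$ case just reflects that $\mathbb{P}$ has operator norm one in the Hilbert setting.
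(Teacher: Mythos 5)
Your proof is correct and follows essentially the same route as the paper: Minkowski's integral inequality to pass the $L^s$-norm inside the time integral, boundedness of the Leray projector on $L^s$ for $1<s<\infty$, and the fact that $\mathbb{P}$ is an orthogonal projection on $L^2$ so its operator norm there is $1$. The paper states these two ingredients in one line without the displayed chain, but the substance is identical.
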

\begin{proof}
This follows using Minkowski's inequality and the fact that $\mathbb{P}: L^s(\R^2)\rightarrow L^s(\R^2)$ for all $s\in (1,\infty)$. For $s=2$, $\mathbb{P}$ is an orthogonal projection, therefore $\|\mathbb{P}\|_{\mathcal{L}(L^2(\R^2))}\leq 1$.
\end{proof}

\begin{lemma}[Estimate of the energy increment]\label{lemma: energy inc}
We have 
\begin{align}
\left|e(t) \left(1-\frac{\delta}{2}\right) - \int_{\R^2} |u_1|^2(x,t)\,\dx\right|&\leq \frac{1}{32}\delta  +20 \pi (\kappa +1)^2 \varepsilon \nonumber\\
&\hspace{0,3cm} +  C(R_0,r_0, u_0,e, \delta,\kappa,\varepsilon)\left(\mu_1^{-\frac{1}{6}}\mu_2^{-\frac{1}{6}} + \frac{\mu_1^{\frac{1}{2}}\mu_2^{\frac{1}{2}}}{\omega} +\frac{\mu_1}{\mu_2} + \frac{1}{\lambda} \right).\label{wishlist eps sigma}
\end{align}
\end{lemma}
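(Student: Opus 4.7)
The plan is to expand $\int |u_1|^2$ using $u_1 = u_0 + u^p + u^c + u^t + v$, isolate the principal contribution $\sum_k\int a_k^2\,\dx$, and show that it matches $e(t)(1-\tfrac{\delta}{2}) - \int|u_0|^2\,\dx$ up to the $\sqrt{\varepsilon^2+|\Rtr|^2}$ correction; all remaining cross terms will be bounded in the desired scaling by the estimates from Section~\ref{sec. estimates of perturbations}.

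First, I would use the disjointness of supports (Lemma~\ref{lemma: supports}) to write
$\int |u^p|^2\,\dx = \sum_k \int a_k^2(x,t)\,w_k^2(\Lambda_k x-\omega t e_1)\,\dx$.
Since $g(y):=w_k^2(\Lambda_k y - \omega t e_1) - 1$ is a mean-zero periodic function of the form $G(\lambda\cdot)$ (because $\xi_k\in\N^2$ and $\int_\tor w_k^2 = 1$ by Lemma~\ref{lemma: small w est}), Lemma~\ref{lemma: fast oscillations} applied with $f=a_k^2$ (supported in $[-\kappa-1,\kappa+1]^2$) yields
$\bigl|\int |u^p|^2\,\dx - \sum_k \int a_k^2\,\dx\bigr| \leq C(R_0,u_0,e,\delta,\kappa,\varepsilon)/\lambda$.
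Next, taking the trace in \eqref{eq: R coeff} and using $\tr \Rtr = 0$ gives the crucial identity $\sum_k a_k^2 = 2\chi_\kappa^2\rho$. Substituting $\rho = 10\sqrt{\varepsilon^2+|\Rtr|^2}+\gamma(t)$ and recalling that $\gamma$ is defined precisely so that $2\gamma(t)\|\chi_\kappa\|_{L^2}^2 = e(t)(1-\tfrac{\delta}{2}) - \int|u_0|^2\,\dx$, I obtain
\[
\sum_k\int a_k^2\,\dx \;=\; 20\int\chi_\kappa^2\sqrt{\varepsilon^2+|\Rtr|^2}\,\dx \;+\; e(t)\bigl(1-\tfrac{\delta}{2}\bigr)-\int|u_0|^2\,\dx.
\]

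With this identity, $e(t)(1-\tfrac{\delta}{2}) - \int|u_1|^2\,\dx$ equals $-20\int \chi_\kappa^2\sqrt{\varepsilon^2+|\Rtr|^2}\,\dx$ plus all remaining error terms from the expansion. The pointwise bound $\sqrt{\varepsilon^2+|\Rtr|^2}\leq \varepsilon + |\Rtr|$ together with $\chi_\kappa \leq \mathbb{1}_{B_{\kappa+1}}$ controls the first piece by $20\pi(\kappa+1)^2\varepsilon + 20\|R_0\|_{C_tL^1_x}$, and the $R_0$-contribution is absorbed into $\tfrac{1}{32}\delta$ via assumption~\eqref{eq: assumption r}. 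The remaining cross terms I bound as follows: $|\int u_0\cdot u^p\,\dx|\leq \|u_0\|_{L^3}\|u^p\|_{L^{3/2}} \lesssim \mu_1^{-1/6}\mu_2^{-1/6}$ by Lemma~\ref{lemma: estimate up} (this is where the $L^3$ regularity of $u_0$ and interpolation produce exactly the exponent $-1/6$); terms involving $u^c$ such as $\int u_0\cdot u^c$, $\int u^p\cdot u^c$ and $\int|u^c|^2$ yield the $\mu_1/\mu_2$ factor by Lemma~\ref{lemma: estimate correctors}; terms involving $u^t$ yield the $\mu_1^{1/2}\mu_2^{1/2}/\omega$ factor by the same lemma; and $v$-terms ($\int u_0\cdot v$, $\int w\cdot v$, $\int u^t\cdot v$, $\int|v|^2$) are handled via Lemma~\ref{lemma: v in L2} and fit inside $\tfrac{1}{32}\delta$ because of the hypothesis $\|r_0\|_{C_tL^2_x} + 2\|u_0\|_{L^2}\|r_0\|_{C_tL^2_x}\leq \tfrac{1}{32}\delta$ in \eqref{eq: assumption r}.

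The main obstacle is essentially bookkeeping: confirming that the identity in Step~3 produces exactly $e(t)(1-\tfrac{\delta}{2})$ (which is where the factor of $10$ in $\rho$ and the definition of $\gamma$ conspire to give the constant $2$ after tracing \eqref{eq: R coeff}), and matching the scaling $\mu_1^{-1/6}\mu_2^{-1/6}$ by recognising that the natural Hölder pairing for $\int u_0\cdot u^p$ is $L^3\cdot L^{3/2}$, since $u_0\in L^3$ and $\|u^p\|_{L^{3/2}}\lesssim \mu_1^{1/2-2/3}\mu_2^{1/2-2/3}$. Collecting all contributions then produces the claimed inequality~\eqref{wishlist eps sigma}.
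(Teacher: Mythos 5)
Your proposal follows essentially the same route as the paper's own proof: use the disjointness of supports to reduce $\int|u^p|^2$ to $\sum_k\int a_k^2\, w_k^2$, apply Lemma~\ref{lemma: fast oscillations} with $f=a_k^2$, take the trace of \eqref{eq: R coeff} to get $\sum_k a_k^2 = 2\chi_\kappa^2\rho$, feed in the definition of $\gamma$ to recover $e(t)(1-\tfrac{\delta}{2})-\int|u_0|^2$, bound the $\sqrt{\varepsilon^2+|\Rtr|^2}$ term by $\varepsilon+|\Rtr|$, and estimate the cross terms by the $L^2$ and $L^3$/$L^{3/2}$ pairings from Section~\ref{sec. estimates of perturbations}, absorbing the $R_0$ and $r_0$ contributions into $\tfrac{1}{32}\delta$ via \eqref{eq: assumption r}. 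The only difference is cosmetic: the paper carries out the first reduction at the matrix level (manipulating $u^p\otimes u^p - \chi_\kappa^2\Rtr$ and then tracing) rather than directly at the scalar level as you do.
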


\begin{proof}
Looking at \eqref{eq: R coeff}, we consider
\begin{align*}
u^p\otimes u^p - \chi^2_\kappa \Rtr &= \chi^2_\kappa\rho I+\sumk  a_k^2 \left(W_k^p\otimes W_k^p - \frac{\xi_k}{|\xi_k|}\otimes\frac{\xi_k}{|\xi_k|}\right).
\end{align*}
We take the trace and use that $\Rtr$ is traceless, hence we get
\begin{align*}
|u^p|^2 - 2\chi^2_\kappa\gamma(t) &= 20 \chi^2_\kappa\sqrt{\varepsilon^2 + |\Rtr|^2} + \sumk a_k^2 \left(|W_k^p|^2 - 1\right)
\end{align*}
Integrating this and using $\sqrt{\varepsilon^2 + |x|^2}\leq \varepsilon + |x|$, we get
\begin{align}
\left|\int_{\R^2} |u^p|^2(x,t) \,\dx - \left(e(t)\left(1-\frac{\delta}{2}\right)-\int_{\R^2}|u_0|^2(x,t)\,\dx\right)\right| &\leq 20 \pi (\kappa +1)^2 \varepsilon + 40 \|R_0\|_{C_tL^1_x}\nonumber\\
&+ \sum_k \left|\int_{\R^2}  a_k^2(x,t)\left(|W_k^p|^2(x,t) -1\right)\,\dx\right|.\label{eq: energy inc up}
\end{align}
We can estimate each summand in the second line with Lemma \ref{lemma: fast oscillations}, using that $a_k$ is supported in $[-\kappa-1,\kappa+1]^2$, \eqref{est ak} and Proposition \ref{prop: building blocks} by
\begin{align}
\left|\int_{\R^2}  a_k^2(x,t) \left(|W_k^p|^2(x,t) -1\right)\,\dx\right|&\leq \frac{4\sqrt{2}(\kappa+1)^2\|a_k^2(\cdot,t)\|_{C^1(\R^2)} \||W_k^p(\cdot,t)|^2-1\|_{L^1(\tor)}}{\lambda}\nonumber\\
&\leq \frac{C(R_0,u_0,e,\delta,\kappa,\varepsilon)}{\lambda} .\label{eq: oscillation wk}
\end{align}
Writing $u_1 = u_0 + u^p+u^c+ u^t+ v$,  we have 
\begin{align*}
\int_{\R^2} |u_1|^2(x,t)\,\dx &= \|u_0(t)\|_{L^2(\R^2)}^2 + \|u^p(t)\|_{L^2(\R^2)}^2 + \|v(t)\|_{L^2(\R^2)}^2 + 2 \int_{\R^2} u_0\cdot v (x,t) \,\dx\\
&\hspace{0,3cm} +2 \int_{\R^2} u_0\cdot (u^p + u^c + u^t)(x,t) \,\dx + 2\int_{\R^2} u^p \cdot(u^c + u^t +v)(x,t)\,\dx\\
&\hspace{0,3cm} + 2\int_{\R^2} v\cdot(u^c + u^t)(x,t)\,\dx + \int_{\R^2}|u^c+u^t|^2(x,t)\,\dx,
\end{align*}
where by Lemma \ref{lemma: estimate up}, Lemma \ref{lemma: estimate correctors} and Lemma \ref{lemma: v in L2} 
\begin{align}
2\left| \int_{\R^2} u_0 \cdot(u^p + u^c + u^t)(x,t) \,\dx \right|& \leq 2\|u_0(t)\|_{L^2(\R^2)} \left(\|u^c(t)\|_{L^2(\R^2)}  +\|u^t(t)\|_{L^2(\R^2)}\right)\nonumber\\
&\hspace{0,3cm} + 2\|u_0(t)\|_{L^3(\R^2)}\|u^p(t)\|_{L^\frac{3}{2}(\R^2)}\nonumber\\
&\leq C(R_0,u_0, e,\delta,\kappa,\varepsilon) \left(\frac{\mu_1}{\mu_2}+\mu_1^{-\frac{1}{6}}\mu_2^{-\frac{1}{6}} +\frac{\mu_1^{\frac{1}{2}}\mu_2^{\frac{1}{2}}}{\omega} \right),\nonumber\\
2\left|\int_{\R^2} u^p\cdot(u^c + u^t +v)(x,t)\,\dx\right|&\leq 2\|u^p(t)\|_{L^2(\R^2)} \left(\|u^c(t)\|_{L^2(\R^2)} +  \|u^t(t)\|_{L^2(\R^2)}\right)\nonumber\\
&\hspace{0,3cm}+ 2 \|\mathbb{P}\|_{\mathcal{L}(L^3(\R^2))}  \|u^p(t)\|_{L^\frac{3}{2}(\R^2)} \|r_0\|_{C_tL^3_x},\nonumber\\
&\leq C(R_0,r_0,u_0,e,\delta,\kappa,\varepsilon) \left(\frac{\mu_1}{\mu_2}+\mu_1^{-\frac{1}{6}}\mu_2^{-\frac{1}{6}} + \frac{\mu_1^{\frac{1}{2}}\mu_2^{\frac{1}{2}}}{\omega} \right),\label{est: rlin2}\\
 2\left|\int_{\R^2} v\cdot(u^c  + u^t)(x,t)\,\dx\right| &\leq 2\|v(t)\|_{L^2(\R^2)}\left(\|u^c(t)\|_{L^2(\R^2)}  + \|u^t(t)\|_{L^2(\R^2)}\right)\nonumber\\
&\leq  C(R_0,r_0,u_0,e,\delta,\kappa,\varepsilon) \left(\frac{\mu_1}{\mu_2} + \frac{\mu_1^{\frac{1}{2}}\mu_2^{\frac{1}{2}}}{\omega}\right),\nonumber\\
\int_{\R^2}|u^c+u^t|^2(x,t)\,\dx &\leq 2\left(\|u^c(t)\|^2_{L^2(\R^2)} + \|u^t(t)\|^2_{L^2(\R^2)}\right)\nonumber\\
&\leq C(R_0,u_0, e,\delta,\kappa,\varepsilon)\left(\left(\frac{\mu_1}{\mu_2}\right)^2  + \left(\frac{\mu_1^{\frac{1}{2}}\mu_2^{\frac{1}{2}}}{\omega}\right)^2\right).\nonumber
\end{align}
This yields
\begin{align*}
\left|e(t) \left(1-\frac{\delta}{2}\right) - \int_{\R^2} |u_1|^2(x,t)\,\dx\right| &\leq \left|\int_{\R^2} |u^p|^2(x,t) \,\dx - \left(e(t)\left(1-\frac{\delta}{2}\right)-\int_{\R^2}|u_0|^2(x,t)\,\dx\right)\right|\\
&\hspace{0,3cm} + \|v(t)\|_{L^2(\R^2)}^2 + 2\|u_0(t)\|_{L^2(\R^2)}\|v(t)\|_{L^2(\R^2)}\\
&\hspace{0,3cm} +C(R_0,r_0, u_0,e, \delta,\kappa,\varepsilon)\left(\mu_1^{-\frac{1}{6}}\mu_2^{-\frac{1}{6}} + \frac{\mu_1^{\frac{1}{2}}\mu_2^{\frac{1}{2}}}{\omega} +\frac{\mu_1}{\mu_2} \right)\\
&\leq  \left|\int_{\R^2} |u^p|^2(x,t) \,\dx - \left(e(t)(1-\frac{\delta}{2})-\int_{\R^2}|u_0|^2(x,t)\,\dx\right)\right|\\
&\hspace{0,3cm} + \|r_0\|_{C_tL^2_x}^2 + 2\|u_0(t)\|_{L^2(\R^2)}\|r_0\|_{C_tL^2_x}\\
&\hspace{0,3cm} + C(R_0,r_0, e,u_0, \delta,\kappa,\varepsilon)\left(\mu_1^{-\frac{1}{6}}\mu_2^{-\frac{1}{6}} + \frac{\mu_1^{\frac{1}{2}}\mu_2^{\frac{1}{2}}}{\omega} +\frac{\mu_1}{\mu_2} \right).
\end{align*}
Let us combine the previous inequality with \eqref{eq: energy inc up} and \eqref{eq: oscillation wk} and then use our assumptions \eqref{eq: assumption r} and $e(t)\geq \frac{1}{2}$, this yields
\begin{align*}
\left|e(t) \left(1-\frac{\delta}{2}\right) - \int_{\R^2} |u_1|^2(x,t)\,\dx\right| &\leq   40 \|R_0\|_{C_tL^1_x} + \|r_0\|_{C_tL^2_x}^2 + 2\|u_0(t)\|_{L^2(\R^2)}\|r_0\|_{C_tL^2_x}\\
&\hspace{0,3cm}+ 20 \pi (\kappa +1)^2 \varepsilon \\
&\hspace{0,3cm}  + C(R_0,r_0, u_0, e,\delta,\kappa,\varepsilon)\left(\mu_1^{-\frac{1}{6}}\mu_2^{-\frac{1}{6}} + \frac{\mu_1^{\frac{1}{2}}\mu_2^{\frac{1}{2}}}{\omega} +\frac{\mu_1}{\mu_2} +\frac{1}{\lambda}\right)\\
&\leq \frac{1}{32}\delta  +20 \pi (\kappa +1)^2 \varepsilon  \\
&\hspace{0,3cm} +  C(R_0,r_0, u_0, e,\delta,\kappa,\varepsilon)\left(\mu_1^{-\frac{1}{6}}\mu_2^{-\frac{1}{6}} + \frac{\mu_1^{\frac{1}{2}}\mu_2^{\frac{1}{2}}}{\omega} +\frac{\mu_1}{\mu_2} + \frac{1}{\lambda} \right).\qedhere
\end{align*}
\end{proof}
At this point, we fix $\varepsilon$ and choose this parameter so small such that
\begin{align*}
20 \pi (\kappa +1)^2 \varepsilon &<\frac{1}{32}\delta,\\
\sqrt{10}\pi ((\kappa+1)\varepsilon^\frac{1}{2}+\delta^{\frac{1}{2}})&\leq 10\delta^{\frac{1}{2}},
\end{align*}
therefore \eqref{wishlist vareps 2} becomes
\begin{align*}
\|u^p(t)\|_{L^2(\R^2)}&\leq 10 \delta^\frac{1}{2} +\frac{ C(R_0,u_0,e,\delta,\kappa,\varepsilon)}{\lambda^\frac{1}{2}}
\end{align*}
and \eqref{wishlist eps sigma} reduces to
\begin{align}\label{energy increment}
\left|e(t) \left(1-\frac{\delta}{2}\right) - \int_{\R^2} |u_1|^2(x,t)\,\dx\right|&< \frac{1}{16}\delta  +  C(R_0,r_0, u_0,e, \delta,\kappa,\varepsilon)\left(\mu_1^{-\frac{1}{6}}\mu_2^{-\frac{1}{6}} + \frac{\mu_1^{\frac{1}{2}}\mu_2^{\frac{1}{2}}}{\omega} +\frac{\mu_1}{\mu_2} + \frac{1}{\lambda} \right)\nonumber\\
&\leq \frac{1}{8}\delta e(t) + C(R_0,r_0, u_0, e,\delta,\kappa,\varepsilon)\left(\mu_1^{-\frac{1}{6}}\mu_2^{-\frac{1}{6}} + \frac{\mu_1^{\frac{1}{2}}\mu_2^{\frac{1}{2}}}{\omega} +\frac{\mu_1}{\mu_2} + \frac{1}{\lambda} \right).
\end{align}

\section{Estimates of the curl in Hardy space}\label{sec: curl estimates}
In the following Lemmas, we prove that the curls of the perturbations are in the real Hardy space $H^p(\R^2)$ for $\frac{2}{3}<p<1$ and estimate their Hardy space seminorms in terms of $\lambda, \mu_1$ and $\mu_2$. We will use  Remark \ref{rem: hardy atoms}; therefore, we decompose the perturbations into finitely many functions that are supported on disjoint, very small balls of radius $\frac{1}{\lambda\mu_1}$.
\begin{lemma}[Curl of $w$]\label{lemma: curl w}
It holds $\curl w(t)\in H^p(\R^2)$ and
\begin{align*}
\|\curl w(t)\|_{H^p(\R^2)}\leq C(R_0,u_0,e,\delta,\kappa,\varepsilon) \lambda\mu_1^{\frac{1}{2}-\frac{2}{p}}\mu_2^\frac{3}{2}\text{ for all } t\in[0,1].
\end{align*}
\end{lemma}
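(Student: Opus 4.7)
The natural starting point is the identity $\curl w = -\sum_{k=1}^4 \Delta H^k$, which follows from $w=\sum_k\nabla^\perp H^k$ and the fact that $\curl\nabla^\perp H = -\Delta H$ in $\R^2$. The whole estimate then reduces to bounding $\|\Delta H^k(t)\|_{H^p(\R^2)}$ for each $k$, and this should be done via the atomic characterization of $H^p$ recalled in Remark \ref{rem: hardy atoms}.

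The key geometric observation is that, by the support analysis carried out in the proof of Lemma \ref{lemma: supports}, for each fixed $t$ the function $H^k(\cdot,t)$ is supported in a disjoint union
\[
\supp H^k(\cdot,t) \subset \bigsqcup_{j=1}^{N_k} B_j,
\qquad B_j = B_{C/(\lambda\mu_1)}(x_j),
\]
where the centers $x_j$ lie in the compact set $\supp a_k \subset [-\kappa-1,\kappa+1]^2$ (we use $\mu_2\gg\mu_1$ so that the rectangular support is comfortably contained in a ball of radius $\sim(\lambda\mu_1)^{-1}$). Counting translates of the lattice $\frac{1}{\lambda}\Lambda_k^{-1}\Z^2$ inside $[-\kappa-1,\kappa+1]^2$ gives $N_k \lesssim \kappa^2 \lambda^2$. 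Hence we can write
\[
\Delta H^k(\cdot,t) = \sum_{j=1}^{N_k} f_j^k, \qquad f_j^k := \mathbb{1}_{B_j}\,\Delta H^k(\cdot,t),
\]
and the plan is to show that each $f_j^k$ is a constant multiple of an $H^p$ atom.

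The three atom conditions will be checked as follows. Condition (i) (support in a ball) holds by construction. Condition (iii) reduces, since $\tfrac{2}{3}<p<1$, to the single cancellation $\int_{B_j} f_j^k\,\dx = 0$ (Remark \ref{rem: hardy atoms}(2)); this holds because $H^k(\cdot,t)$ is compactly supported \emph{inside} $B_j$, so
\[
\int_{B_j}\Delta H^k\,\dx = \int_{B_j}\dv(\nabla H^k)\,\dx = 0
\]
by the divergence theorem. Condition (ii) is the $L^\infty$ bound, which is supplied by Lemma \ref{lemma: estimate H} with $l=2$, $s=\infty$:
\[
\|\Delta H^k(\cdot,t)\|_{L^\infty(\R^2)} \leq C(R_0,u_0,e,\delta,\kappa,\varepsilon)\,\lambda\,\mu_1^{1/2}\mu_2^{3/2}.
\]
Thus by Remark \ref{rem: hardy atoms}(1) we get, for each $j$,
\[
\|f_j^k\|_{H^p(\R^2)} \leq C |B_j|^{1/p}\|\Delta H^k\|_{L^\infty} \leq C (\lambda\mu_1)^{-2/p}\,\lambda\,\mu_1^{1/2}\mu_2^{3/2}.
\]

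Finally we assemble, using the quasi-triangle inequality $\|\sum g_j\|_{H^p}^p \leq \sum \|g_j\|_{H^p}^p$ valid for $p\leq 1$. Summing over the $N_k \lesssim \kappa^2\lambda^2$ atoms and over $k=1,\dots,4$,
\[
\|\curl w(t)\|_{H^p(\R^2)}^p \leq 4\,N_k\,C^p\,(\lambda\mu_1)^{-2}\,\lambda^p\,\mu_1^{p/2}\,\mu_2^{3p/2} \lesssim \kappa^2 \lambda^p \mu_1^{p/2 - 2}\mu_2^{3p/2},
\]
and taking the $p$-th root yields the claimed bound, with the constant absorbing the $\kappa$-dependence into $C(R_0,u_0,e,\delta,\kappa,\varepsilon)$. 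The only mildly delicate points are the disjointness/counting of the small balls (already handled by Lemma \ref{lemma: supports}) and the vanishing-mean condition; the latter is the real reason we chose to work with the stream function $H^k$ rather than with $W^p_k + W^c_k + \ldots$ directly, as it makes the cancellation transparent via the divergence theorem.
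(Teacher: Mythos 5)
Your proof is correct and follows essentially the same route as the paper: decompose into $\approx\kappa^2\lambda^2$ pieces supported on disjoint balls of radius $\sim(\lambda\mu_1)^{-1}$, verify each piece is a multiple of an $H^p$ atom (compact support, single cancellation since $2/3<p<1$, $L^\infty$ bound from Lemma~\ref{lemma: estimate H}), and combine via the $p$-quasi-triangle inequality. The only cosmetic difference is that the paper restricts $w$ itself to each ball and notes $\int\curl\theta_{x_0}=0$ because the curl of a compactly supported field integrates to zero, whereas you work with $\Delta H^k=\dv(\nabla H^k)$ ball by ball and invoke the divergence theorem—both are the same cancellation.
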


\begin{proof}
By definition of $H^k$, $\supp H^k = \supp W_k^p.$ As seen in the proof of Lemma \ref{lemma: supports}, for a fixed time $t$, the perturbations are supported in small, disjoint balls of radius $\frac{1}{\lambda\mu_1}$ around the points in the  finite set $$M_k(t)= \left\lbrace\frac{1}{\lambda}\Lambda_k^{-1}\left(\left(\frac{1}{2},\frac{1}{2}\right)+\frac{k}{16}|\xi_k|^2e_1+\Z^2\right) + \omega t\frac{\xi_k}{|\xi_k|^2}, k=1,2,3,4\right\rbrace\cap B_{\kappa+1}.$$ Let us abbreviate $B_{x_0} = B_{\frac{1}{\lambda\mu_1}}(x_0)$ for $x_0\in M(t)$, and let us decompose $w$ as
\begin{align*}
w(x,t)= \sum_{x_0\in M(t)} \theta_{x_0}(x,t)
\end{align*}
where
\begin{align*}
\theta_{x_0}(x,t) =\mathbb{1}_{B(x_0)}(x)w(x,t).
\end{align*}
Since $\theta_{x_0}$ is smooth and has compact support, $\curl\theta_{x_0}\in H^p(\R^2)$ since, as a derivative of a compactly supported function, it satisfies  $\int_{\R^2}\curl\theta_{x_0} \,\dx = 0$. We estimate the $H^p$-seminorm for each $\curl\theta_{x_0}.$ We have 
\begin{align*}
\curl \theta_{x_0}(x,t) &= \mathbb{1}_{B(x_0)}(x)\curl  w(x,t) = -\mathbb{1}_{B(x_0)}(x)\sumk\Delta H^k(x,t).
\end{align*}
 As already said, each $\theta_{x_0}$ is supported on one ball of measure $\frac{C}{\lambda\mu_1}$.  By Lemma \ref{lemma: estimate H},
\begin{align*}
\|\curl\theta_{x_0}(t)\|_{L^\infty(\R^2)}\leq \sumk\|\nabla^2 H^k(t)\|_{L^\infty(\R^2)}\leq C(R_0,u_0,e,\delta,\kappa,\varepsilon)\lambda \mu_{1}^{\frac{1}{2}}\mu_2^{\frac{3}{2}}
\end{align*}
This gives us by Remark \ref{rem: hardy atoms}
\begin{align*}
\|\curl\theta_{x_0}(t)\|_{H^p(\R^2)}\leq C(R_0,u_0,e,\delta,\kappa,\varepsilon) \lambda^{1-\frac{2}{p}}\mu_1^{\frac{1}{2}-\frac{2}{p}}\mu_2^{\frac{3}{2}}.
\end{align*}
Since $|M(t)|$ is of order $\kappa^2\lambda^2$, $\curl w$ is made up of $\approx\lambda^2\kappa^2$- many functions $\curl\theta_{x_0}$, and we obtain
\begin{align*}
\|\curl w(t)\|^p_{H^p(\R^2)}&\leq \sum_{x_0\in M(t)}\|\curl\theta_{x_0}(t)\|^p_{H^p(\R^2)} \leq C(R_0,u_0,e,\delta,\kappa,\varepsilon) \lambda^2 \lambda^{p-2} \mu_1^{\frac{p}{2}-2}\mu_2^\frac{3p}{2}\\ &=  C(R_0,u_0,e,\delta,\kappa,\varepsilon) \lambda^{p} \mu_1^{\frac{p}{2}-2}\mu_2^\frac{3p}{2} .\qedhere
\end{align*}
\end{proof}

\begin{lemma}[Curl of $u^t$]\label{lemma: curl ut}
It holds $\curl u^t(t)\in H^p(\R^2)$ and
\begin{align*}
\|\curl u^{t}(t)\|_{H^p(\R^2)}\leq C(R_0,u_0,e,\delta,\kappa,\varepsilon) \omega^{-1}\lambda\mu_1^{1-\frac{2}{p}}\mu_2^2 \text{ for all } t\in[0,1].
\end{align*}
\end{lemma}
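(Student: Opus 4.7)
The plan is to reduce the statement to a direct application of the atomic estimate (Remark \ref{rem: hardy atoms}), following exactly the same strategy used in the proof of Lemma \ref{lemma: curl w} for $\curl w$. The only genuinely new point is how to handle the Leray projector $\mathbb{P}$: even though $\mathbb{P}(a_k^2 Y_k)$ is not compactly supported (so it cannot be directly split as a sum of atoms), the key observation is the identity
\begin{equation*}
\curl \mathbb{P} F = \curl F \quad \text{for smooth } F,
\end{equation*}
which holds because $\mathbb{P} = \mathrm{Id} - \nabla \Delta^{-1} \div$ and $\curl \nabla = 0$. Thus
\begin{equation*}
\curl u^t(\cdot, t) = -\sumk \curl\bigl(a_k^2(\cdot,t)\, Y_k(\cdot,t)\bigr),
\end{equation*}
and the right-hand side \emph{is} compactly supported, so the atomic strategy applies.

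Next, I would observe that $Y_k(\cdot, t)$ is built from $q_k$, which has the same concentrated support structure as the $w_k$ considered in Lemma \ref{lemma: supports}: namely, for a fixed $t$, the support of $a_k^2 Y_k$ is contained in the union, over a finite set $M_k(t)\subset B_{\kappa+1}$ of cardinality $\lesssim \kappa^2 \lambda^2$, of disjoint balls $B_{x_0} := B_{1/(\lambda\mu_1)}(x_0)$. Writing
\begin{equation*}
a_k^2(x,t)\, Y_k(x,t) = \sum_{x_0 \in M_k(t)} \theta^k_{x_0}(x,t), \qquad \theta^k_{x_0}(x,t) := \mathbb{1}_{B_{x_0}}(x)\, a_k^2(x,t)\, Y_k(x,t),
\end{equation*}
each $\curl \theta^k_{x_0}(\cdot, t)$ is a smooth compactly supported function (supported in the single ball $B_{x_0}$) which, being a derivative of a compactly supported function, automatically satisfies $\int_{\R^2} \curl \theta^k_{x_0} \,\dx = 0$. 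Hence Remark \ref{rem: hardy atoms}(2) (valid since $2/3 < p < 1$) yields
\begin{equation*}
\|\curl \theta^k_{x_0}(\cdot,t)\|_{H^p(\R^2)} \leq C\, |B_{x_0}|^{1/p} \|\curl \theta^k_{x_0}(\cdot,t)\|_{L^\infty(\R^2)}.
\end{equation*}

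For the $L^\infty$ estimate I would expand by the Leibniz rule $\curl(a_k^2 Y_k) = a_k^2 \curl Y_k + \nabla^\perp(a_k^2)\cdot Y_k$; using \eqref{est ak} together with the $l=0,1$ cases of the building-block estimate in Proposition \ref{prop: building blocks},
\begin{equation*}
\|\nabla^l Y_k(\cdot,t)\|_{L^\infty} \leq C\, \omega^{-1} \lambda^l \mu_1 \mu_2^{l+1},
\end{equation*}
the dominant contribution is the one with a derivative falling on $Y_k$, giving
\begin{equation*}
\|\curl \theta^k_{x_0}(\cdot,t)\|_{L^\infty(\R^2)} \leq C(R_0,u_0,e,\delta,\kappa,\varepsilon)\, \omega^{-1} \lambda \mu_1 \mu_2^{2}.
\end{equation*}
Combined with $|B_{x_0}|^{1/p} \leq C (\lambda\mu_1)^{-2/p}$ this gives $\|\curl \theta^k_{x_0}(\cdot,t)\|_{H^p} \leq C\, \omega^{-1} \lambda^{1-2/p} \mu_1^{1-2/p} \mu_2^2$. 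Finally I would sum over $k\in\{1,\dots,4\}$ and $x_0 \in M_k(t)$ using the $p$-subadditivity of $\|\cdot\|_{H^p}^p$ and $|M_k(t)| \lesssim \kappa^2 \lambda^2$:
\begin{equation*}
\|\curl u^t(\cdot,t)\|_{H^p}^p \leq C \lambda^2 \cdot \omega^{-p} \lambda^{p-2} \mu_1^{p-2} \mu_2^{2p} = C\, \omega^{-p} \lambda^p \mu_1^{p-2} \mu_2^{2p},
\end{equation*}
which after taking the $p$-th root yields the claimed bound.

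The only conceptual obstacle here is the nonlocal nature of $\mathbb{P}$, and it is fully resolved by the $\curl \mathbb{P} = \curl$ identity; the remainder is a routine repetition of the atomic argument from Lemma \ref{lemma: curl w}, with the single building-block estimate for $w_k$ replaced by the corresponding one for $q_k$ (equivalently, for $Y_k$), which carries the extra gain $\omega^{-1}$ and the different powers of $\mu_1,\mu_2$ producing precisely the final exponents.
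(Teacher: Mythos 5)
Your proposal is correct and follows essentially the same route as the paper's own proof: decompose $\curl(a_k^2\,Y_k)$ into pieces supported on the disjoint balls $B_{1/(\lambda\mu_1)}(x_0)$, use $\curl\mathbb{P}=\curl$ to get rid of the Leray projector, bound each piece in $L^\infty$, invoke Remark \ref{rem: hardy atoms}, and sum via $p$-subadditivity with $|M_k(t)|\lesssim\kappa^2\lambda^2$. You are a bit more explicit than the paper in flagging that $u^t$ itself is \emph{not} compactly supported while $\curl u^t$ is, and in recording the Leibniz-rule term $\nabla^\perp(a_k^2)\cdot Y_k$ as lower order; the paper simply reuses the decomposition of Lemma \ref{lemma: curl w} and writes only the dominant term with the derivative on $Y_k$, but the estimates and conclusion are identical (a sign in your Leibniz formula is off given the convention $\nabla^\perp f=(\partial_2 f,-\partial_1 f)$, which is immaterial for the $L^\infty$ bound).
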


\begin{proof}
We write again 
\begin{align*}
u^t(x,t) = \sum_{x_0\in M(t)} \theta_{x_0}(x,t)
\end{align*}
with the same decomposition as in the previous Lemma.
Since $\curl (\mathbb{P} f) = \curl f$ for all smooth $f:\mathbb{R}^2\rightarrow\R^2$, we have
\begin{align*}
\curl \theta_{x_0}(x,t)& =\mathbb{1}_{B(x_0)} \sumk a_k^2(x,t)  \frac{\lambda}{\omega}(\varphi_{\mu_1}^k)^2(\lambda(\xi_k\cdot x - \omega t))(\varphi_{\mu_2}^2)'(\lambda\xi_k^\perp\cdot x)|\xi_k|^2\\
&= \mathbb{1}_{B(x_0)}\sumk a_k^2(x,t) (\partial_2 q_k)(\Lambda_k x- \omega t e_1).
\end{align*}
Arguing in the same way as before, we just need to estimate with Lemma \ref{lemma: small w est}
\begin{align*}
\|\curl\theta_{x_0}(t)\|_{L^\infty(\R^2)}\leq C(R_0,u_0,e,\delta,\kappa,\varepsilon)\omega^{-1}\lambda\mu_1\mu_2^{2},
\end{align*}
hence
\begin{equation*}
\|\curl u^t(t)\|_{H^p(\R^2)}\leq C(R_0,u_0,e,\delta,\kappa,\varepsilon) \omega^{-1}\lambda\mu_1^{1-\frac{2}{p}}\mu_2^2.\qedhere
\end{equation*}
\end{proof}

\begin{lemma}[Curl of $v$]\label{lemma: curl v}
It holds 
\begin{align*}
\|\curl v(t)\|_{H^p(\R^2)} = \|\int_0^t \curl r_0(s)\,\ds\|_{H^p(\R^2)}.\qedhere
\end{align*}
\end{lemma}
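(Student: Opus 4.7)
The plan is to unwind the definition $v(t) = \mathbb{P}\int_0^t r_0(\cdot,s)\,\ds$ and exploit the elementary identity $\curl \mathbb{P} F = \curl F$, which holds for any sufficiently regular vector field $F$. Recall that the Leray projector can be written as $\mathbb{P} F = F - \nabla \Delta^{-1}\div F$. Since the curl of a gradient vanishes, applying $\curl$ to $\mathbb{P} F$ formally reproduces $\curl F$. In our setting the vector field
\begin{align*}
F(x,t) := \int_0^t r_0(x,s)\,\ds
\end{align*}
is smooth with compact support in $x$ (the support being contained in the compact set $\supp_{(t,x)} r_0 \cap (\R^2\times\{s : s\leq t\})$), so all manipulations below are justified without regularity issues. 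Alternatively, passing to Fourier variables one has $\widehat{\mathbb{P} F}(\xi) = (\mathrm{Id} - \xi\otimes\xi/|\xi|^2)\hat F(\xi)$, and the curl extracts $i\xi^\perp \cdot \hat F(\xi)$, a quantity which is manifestly invariant under removal of the $\xi$-parallel component of $\hat F$.

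The first step is to commute the curl with the time integral: because $r_0$ is smooth and compactly supported in space, standard differentiation under the integral sign (or Fubini applied in the sense of distributions) gives
\begin{align*}
\curl F(\cdot,t) \;=\; \curl \int_0^t r_0(\cdot,s)\,\ds \;=\; \int_0^t \curl r_0(\cdot,s)\,\ds.
\end{align*}

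Combining this with $\curl \mathbb{P} F = \curl F$ yields the distributional identity
\begin{align*}
\curl v(\cdot,t) \;=\; \curl \mathbb{P} F(\cdot,t) \;=\; \curl F(\cdot,t) \;=\; \int_0^t \curl r_0(\cdot,s)\,\ds .
\end{align*}
Since the left-hand and right-hand sides coincide as tempered distributions, the radial maximal functions $m_\Psi$ appearing in Definition \ref{def: hardy space} coincide pointwise, and hence the $H^p(\R^2)$ quasinorms agree, giving exactly the claimed identity.

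There is no real obstacle here; the only point requiring minor care is the justification of $\curl \mathbb{P} = \curl$, which is immediate from the Fourier-multiplier description of $\mathbb{P}$ (or equivalently from $\mathbb{P} = \mathrm{Id} - \nabla\Delta^{-1}\div$ applied to smooth compactly supported fields) and the fact that $\curl \nabla \equiv 0$.
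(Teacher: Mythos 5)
Your proof is correct and takes essentially the same approach as the paper: both rest on the identity $\curl \mathbb{P} f = \curl f$ for smooth $f$, which immediately gives $\curl v(t) = \int_0^t \curl r_0(s)\,\ds$ and hence equality of the $H^p$ quasinorms. You merely fill in more justification (the Fourier-multiplier / $\mathbb{P} = \mathrm{Id}-\nabla\Delta^{-1}\div$ picture and the commutation of $\curl$ with the time integral) than the paper's one-line argument.
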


\begin{proof}
This is true since  $\curl (\mathbb{P} f) = \curl f$ for all smooth $f:\mathbb{R}^2\rightarrow\R^2$, which gives
\begin{equation*}
\curl v(t) = \int_0^t \curl r_0(s) \,\ds.\qedhere
\end{equation*}
\end{proof}

\section{The new error}
This section is devoted to the definition of the new error $(r_1,R_1)$, which will be estimated in the next section.
\subsection{The new Reynolds-defect-equation}
Plugging  $u_1$ into the new Reynolds-defect-equation and writing $u_1 = u_0 + w+u^t+v,$ we need to define $(r_1,R_1, p_1)$ such that
\begin{align}
-r_1 - \dv \overset{\circ}{R_1} &= \dv(u_0\otimes (u_1 - u_0) + (u_1-u_0)\otimes u_0)\nonumber\\
&\hspace{0,3cm} + \dv((u_1-u_0 - u^p)\otimes u^p) + \dv(u^p\otimes (u_1-u_0-u^p))\nonumber\\
&\hspace{0,3cm}  + \dv((u_1-u_0-u^p)\otimes (u_1-u_0-u^p))\nonumber\\
&\hspace{0,3cm} + \partial_t u^t + \dv(u^p\otimes u^p - \Rtr)\nonumber\\
&\hspace{0,3cm} + \partial_t (u^p+ u^c )\nonumber\\
&\hspace{0,3cm} + \partial_t v - r_0\nonumber\\
&\hspace{0,3cm} + \nabla(p_1 - p_0).\label{eq: iteration eq}
\end{align}
We will analyse each line in \eqref{eq: iteration eq} in separate subsections.
\subsection{\texorpdfstring{Analysis of the first three lines of \eqref{eq: iteration eq}}{Analysis of the first three lines of the iteration equation}}
Let us define
\begin{align*}
R^{\operatorname{lin},1} &= u_0\otimes (u_1 - u_0) + (u_1-u_0)\otimes u_0,\\
R^{\operatorname{lin},2}& = (u_1-u_0 - u^p)\otimes u^p + u^p\otimes (u_1-u_0-u^p),\\
R^{\operatorname{lin},3}&=  (u_1-u_0-u^p)\otimes (u_1-u_0-u^p),
\end{align*}
i.e.
\begin{align*}
&\dv(u_0\otimes (u_1 - u_0) + (u_1-u_0)\otimes u_0)\\
+ &\dv((u_1-u_0 - u^p)\otimes u^p) + \dv(u^p\otimes (u_1-u_0-u^p))\\
+ &\dv((u_1-u_0-u^p)\otimes (u_1-u_0-u^p))\\
&= \dv(R^{\operatorname{lin},1} + R^{\operatorname{lin},2} + R^{\operatorname{lin},3}).
\end{align*}

\subsection{\texorpdfstring{Analysis of the fourth line of \eqref{eq: iteration eq}}{Analysis of the fourth line of the iteration equation}}
\subsubsection{\texorpdfstring{Rewriting the fourth line of \eqref{eq: iteration eq}}{Rewriting the fourth line of the iteration equation}}
Using that $$u^t(x,t)= -\sumk\mathbb{P}\left(a_k^2(x,t)  Y_k(x,t)\right) = -\sumk a_k^2(x,t)  Y_k(x,t) - \nabla p^t$$ for some $p^t$ and \eqref{eq: R coeff}, let us start by calculating
\begin{align*}
\partial_t u^t + \dv(u^p\otimes u^p - \Rtr)&= -\sumk \partial_t a_k^2 Y_k -\sumk a_k^2\partial_t Y_k\\
&\hspace{0,3cm} + \dv\left(\sumk a_k^2 W_k^p\otimes W_k^p\right)\\
&\hspace{0,3cm} + \dv\left(-\sum_k a_k^2 \frac{\xi_k}{|\xi_k|}\otimes \frac{\xi_k}{|\xi_k|}\right)\\
&\hspace{0,3cm} + \dv(\chi_\kappa^2\Rtr - \Rtr) - \nabla (\partial_t p^t) + \nabla(\chi_\kappa^2\rho)
\end{align*}
We consider the second and third summand  on the right hand side of the previous calculation. By Proposition \ref{prop: building blocks}, we have
\begin{align*}
-\sumk a_k^2 \partial_t Y_k+\dv\left(\sumk a_k^2 W_k^p\otimes W_k^p\right) &= \sumk a_k^2  \underbrace{\left(\dv\left(W_k^p\otimes W_k^p\right) - \partial_t Y_k\right)}_{=0}\\
&\hspace{0,3cm} + \sumk  (W_k^p\otimes W_k^p)\cdot\nabla a_k^2
\end{align*}
Also, we have
\begin{align*}
\dv\left(-\sum_k a_k^2 \frac{\xi_k}{|\xi_k|}\otimes \frac{\xi_k}{|\xi_k|}\right) &= -\sumk\left(\frac{\xi_k}{|\xi_k|}\otimes \frac{\xi_k}{|\xi_k|}\right)\cdot\nabla a_k^2.
\end{align*}
Putting together the previous two calculations, the fourth line in \eqref{eq: iteration eq} equals
\begin{align*}
\partial_t u^t + \dv(u^p\otimes u^p - \overset{\circ}{R_0})&= -\sumk\partial_t a_k^2  Y_k+ \sumk    \left(W_k^p\otimes W_k^p - \frac{\xi_k}{|\xi_k|}\otimes \frac{\xi_k}{|\xi_k|}\right)\cdot \nabla a_k^2 \\
&\hspace{0,3cm} + \dv(\chi_\kappa^2\Rtr - \Rtr) - \nabla (\partial_t p^t) + \nabla(\chi_\kappa^2\rho)\\
& =r^Y + \dv R^Y + r^{\operatorname{quad}}+\dv R^{\operatorname{quad}}  +\dv R^{\kappa} - \nabla \pi_1,
\end{align*}
where we can directly define
\begin{align*}
R^{\kappa} &= \chi_\kappa^2 \Rtr - \Rtr,\\
\pi_1 &= \partial_t p^t - \chi_\kappa^2\rho.
\end{align*}

\subsubsection{\texorpdfstring{Definition of  $R^{\operatorname{quad}}$ and $r^{\operatorname{quad}}$}{Definition of the quadratic error} }
We define $R^{\quadr}$, $r^{\quadr}$ as
\begin{align*}
(r^{\quadr},R^{\quadr}) = \sumk\tilde{S}_N\left(\nabla a_k^2, W_k^p\otimes W_k^p - \frac{\xi_k}{|\xi_k|}\otimes \frac{\xi_k}{|\xi_k|}\right),
\end{align*}
with an $N\in\N$ to be chosen in Section \ref{sec: proof of main prop}. Hence, by construction
\begin{align*}
r^{\quadr} + \dv R^{\quadr} = \sumk    \left(W_k^p\otimes W_k^p - \frac{\xi_k}{|\xi_k|}\otimes \frac{\xi_k}{|\xi_k|}\right)\cdot \nabla a_k^2.
\end{align*}

\subsubsection{\texorpdfstring{Definition of $R^Y$ and $r^Y$}{Definition of the error from the time corrector}}
We add and subtract
\begin{align*}
-\sumk  \partial_t a_k^2(x,t) Y_k(x,t) &= -\sumk   \partial_t a_k^2(x,t)\left( Y_k(x,t) - \frac{1}{\omega}\xi_k\right) - \sumk  \frac{1}{\omega}\partial_t a_k^2(x,t) \xi_k.
\end{align*}
Noting that $\int_{\tor} Y_k\,\dx = \frac{1}{\omega}\xi_k,$ we can define
\begin{align*}
(r^{Y,1},R^Y) =-\sumk S_N(\partial_t a_k^2, Y_k-\frac{1}{\omega}\xi_k)
\end{align*}
so that by definition
\begin{align*}
r^{Y,1} + \dv R^Y =  -\sumk   \partial_t a_k^2\left( Y_k - \frac{1}{\omega}\xi_k\right).
\end{align*}
We further define
\begin{align*}
r^{Y,2} = - \sumk  \frac{1}{\omega} \partial_t a_k^2(x,t)\xi_k
\end{align*}
and set
\begin{align*}\
r^Y= r^{Y, 1} + r^{Y, 2},
\end{align*}
hence
\begin{align*}
r^Y + \dv R^Y = -\sumk  \partial_t a_k^2 Y_k.
\end{align*}

\subsection{\texorpdfstring{Analysis of the fifth line of \eqref{eq: iteration eq}}{Analysis of the fifth line of the iteration equation}}
We will write the third line in the form 
\begin{align*}
\partial_t(u^p+u^c) = r^{\tme} + \dv R^{\tme}.
\end{align*}
We will use the operators from Definition \ref{def: antidivergence by hand}.
Calculating, we see that
\begin{align*}
\partial_t u^p(x,t) &= \sumk \partial_t a_k(x,t) W_k^p(x,t) + \sumk a_k(x,t) \partial_t W_k^p(x,t)\\
&= \sumk\partial_t a_k(x,t) W_k^p(x,t) + \omega\lambda\mu_1\sumk a_k(x,t) (\varphi')^k_{\mu_1}(\lambda(\xi_k\cdot x-\omega t))\varphi_{\mu_2}(\lambda\xi_k^\perp\cdot x)\frac{\xi_k}{|\xi_k|}\\
&=  \left( r^{\tme,1} + \dv R^{\tme,1}\right) + \dv\left(\frac{\omega\lambda\mu_1}{|\xi_k|}\sumk a_k A((\varphi')_{\mu_1}^k (\lambda(\cdot - \omega t)), \varphi_{\mu_2}(\lambda\cdot),\xi_k)\right)\\
&\hspace{0,3cm}  - \frac{\omega\lambda\mu_1}{|\xi_k|}\sumk A((\varphi')_{\mu_1}^k (\lambda(\cdot - \omega t)), \varphi_{\mu_2}(\lambda\cdot),\xi_k)\cdot \nabla a_k\\
&=( r^{\tme,1} + \dv R^{\tme,1}) + \dv\tilde{R}^{\tme,2} +(r^{\tme,2}+ \dv R^{\tme,2}).
\end{align*} 
with 
\begin{align*}
(r^{\tme,1},R^{\tme,1}) &= \sumk S_N(\partial_t a_k, W_k^p),\\
\tilde{R}^{\tme,2} &= \frac{\omega\lambda\mu_1}{|\xi_k|}\sumk a_k A((\varphi')_{\mu_1}^k (\lambda(\cdot - \omega t)), \varphi_{\mu_2}(\lambda\cdot),\xi_k),\\
( r^{\tme,2},R^{\tme,2}) &= -\frac{\omega\lambda\mu_1}{|\xi_k|}\sumk\tilde{S}_N(\nabla a_k,A((\varphi')_{\mu_1}^k (\lambda(\cdot - \omega t)), \varphi_{\mu_2}(\lambda\cdot),\xi_k)).
\end{align*}
Analogously, let us write, using \eqref{decomposition of uc}
\begin{align}\label{partial uc}
\partial_t u^c(x,t) &=\sumk  \partial_ta_k(x,t) W_k^c(x,t) +a_k(x,t) \partial_t W_k^c(x,t)\nonumber\\
&\hspace{0,3cm} + \sumk \partial_t b_k^1(x,t) W_k^{cc,\parallel}(x,t) + b_k^1(x,t)\partial_t W_k^{cc,\parallel}(x,t)\nonumber)\\
&\hspace{0,3cm} +\sumk \partial_t b_k^2(x,t) W_k^{cc,\perp}(x,t) + b_k^2(x,t)\partial_t W_k^{cc,\perp}(x,t) .
\end{align}
The first line  of \eqref{partial uc} can be written as
\begin{align*}
&\sumk  \partial_ta_k(x,t) W_k^c(x,t) +\sumk a_k(x,t) \partial_t W_k^c(x,t)\\
&=\sumk  \partial_ta_k(x,t) W_k^c(x,t) + \frac{\omega\lambda\mu_1^2}{\mu_2}\sumk a_k(x,t) (\varphi'')^k_{\mu_1}(\lambda(\xi_k\cdot x-\omega t))(\Phi'')_{\mu_2}(\lambda\xi_k^\perp\cdot x)\frac{\xi_k}{|\xi_k|}\\
&= \left( r^{\tme,3} + \dv R^{\tme,3}\right) +\dv\left(\frac{\omega\lambda\mu_1^2}{\mu_2|\xi_k|} \sumk a_k B((\varphi'')^k_{\mu_1}(\lambda(\cdot -\omega t)),(\Phi'')_{\mu_2}(\lambda\cdot),\xi_k)\right)\\
&\hspace{0,3cm} - \frac{\omega\lambda\mu_1^2}{\mu_2|\xi_k|} \sumk  B((\varphi'')^k_{\mu_1}(\lambda(\cdot -\omega t)),(\Phi'')_{\mu_2}(\lambda\cdot),\xi_k)\cdot\nabla a_k\\
&= \left( r^{\tme,3} + \dv R^{\tme,3}\right) + \dv\tilde{R}^{\tme,4} +( r^{\tme,4} + \dv R^{\tme,4})
\end{align*}
with
\begin{align*}
(r^{\tme,3},R^{\tme,3}) &=\sumk S_N(\partial_ta_k, W_k^c),\\
\tilde{R}^{\tme,4} & = \frac{\omega\lambda\mu_1^2}{\mu_2|\xi_k|} \sumk a_k B((\varphi'')^k_{\mu_1}(\lambda(\cdot -\omega t)),(\Phi'')_{\mu_2}(\lambda\cdot),\xi_k),\\
 (r^{\tme,4} , R^{\tme,4}) & = -\frac{\omega\lambda\mu_1^2}{\mu_2|\xi_k|} \sumk\tilde{S}_N\left(\nabla a_k,B((\varphi'')^k_{\mu_1}(\lambda(\cdot -\omega t)),(\Phi'')_{\mu_2}(\lambda\cdot),\xi_k)\right).
\end{align*}
For the second line of \eqref{partial uc}, we write
\begin{align*}
 &\sumk \partial_t b_k^1(x,t) W_k^{cc,\parallel}(x,t) + \sumk b_k^1(x,t)\partial_t W_k^{cc,\parallel}(x,t)\\ 
&= \sumk \partial_t b_k^1(x,t) W_k^{cc,\parallel}(x,t) + \frac{\omega\mu_1}{\mu_2}\sumk  b_k^1(x,t) (\varphi')^k_{\mu_1}(\lambda(\xi_k\cdot x-\omega t))(\Phi'')_{\mu_2}(\lambda\xi_k^\perp\cdot x)\frac{\xi_k}{|\xi_k|}\\
 &=(r^{\tme,5} + \dv R^{\tme,5}) + \dv\left(\frac{\omega\mu_1}{\mu_2|\xi_k|}\sumk b_k^1 A((\varphi')^k_{\mu_1}(\lambda(\cdot -\omega t)),(\Phi'')_{\mu_2}(\lambda\cdot),\xi_k)\right)\\
 &\hspace{0,3cm} - \frac{\omega\mu_1}{\mu_2|\xi_k|}\sumk  A((\varphi')^k_{\mu_1}(\lambda(\cdot -\omega t)),(\Phi'')_{\mu_2}(\lambda\cdot),\xi_k)\cdot\nabla b_k^1\\
 &=(r^{\tme,5} + \dv R^{\tme,5}) + \dv\tilde{R}^{\tme,6} +( r^{\tme,6}+\dv R^{\tme,6})
\end{align*}
with
\begin{align*}
(r^{\tme,5} ,\dv R^{\tme,5}) &= \sumk S_N(\partial_t b_k^1,W_k^{cc,\parallel}),\\
\tilde{R}^{\tme,6} &= \frac{\omega\mu_1}{\mu_2|\xi_k|}\sumk b_k^1 A((\varphi')^k_{\mu_1}(\lambda(\cdot -\omega t)),(\Phi'')_{\mu_2}(\lambda\cdot),\xi_k),\\
(r^{\tme,6} ,\dv R^{\tme,6}) &= -\frac{\omega\mu_1}{\mu_2|\xi_k|}\sumk\tilde{S}_N\left(\nabla b_k^1, A((\varphi')^k_{\mu_1}(\lambda(\cdot -\omega t)),(\Phi'')_{\mu_2}(\lambda\cdot),\xi_k)\right).
\end{align*}
Similarly, we have for the third line of \eqref{partial uc}
\begin{align*}
 &\sumk \partial_t b_k^2(x,t) W_k^{cc,\perp}(x,t) + \sumk b_k^2(x,t)\partial_t W_k^{cc,\perp}(x,t)\\
&=  \sumk \partial_t b_k^2(x,t) W_k^{cc,\perp}(x,t) +  \frac{\omega\mu_1}{\mu_2}\sumk b_k^2(x,t)(\varphi')^k_{\mu_1}(\lambda(\xi_k\cdot x - \omega t))(\Phi'')_{\mu_2}(\lambda\xi_k^\perp\cdot x)\frac{\xi_k^\perp}{|\xi_k|}\\
  &=(r^{\tme,7} + \dv R^{\tme,7}) +\dv\left( \frac{\omega\mu_1}{\mu_2|\xi_k|}\sumk b_k^2 B((\varphi')^k_{\mu_1}(\lambda(\cdot -\omega t)),(\Phi'')_{\mu_2}(\lambda\cdot),\xi_k)\right)\\
 &\hspace{0,3cm} - \frac{\omega\mu_1}{\mu_2|\xi_k|}\sumk  B((\varphi')^k_{\mu_1}(\lambda(\cdot -\omega t)),(\Phi'')_{\mu_2}(\lambda\cdot),\xi_k)\cdot\nabla b_k^2\\
 &= (r^{\tme,7} + \dv R^{\tme,7} )+ \dv\tilde{R}^{\tme,8} + (r^{\tme,8}+\dv R^{\tme,8})
\end{align*}
with
\begin{align*}
(r^{\tme,7} ,\dv R^{\tme,7}) &= \sumk S_N(\partial_t b_k^2,W_k^{cc,\perp}),\\
\tilde{R}^{\tme,8} &= \frac{\omega\mu_1}{\mu_2|\xi_k|}\sumk b_k^2 B((\varphi')^k_{\mu_1}(\lambda(\cdot -\omega t)),(\Phi'')_{\mu_2}(\lambda\cdot),\xi_k),\\
(r^{\tme,8} ,\dv R^{\tme,8}) &= -\frac{\omega\mu_1}{\mu_2|\xi_k|}\sumk\tilde{S}_N\left(\nabla b_k^2, B((\varphi')^k_{\mu_1}(\lambda(\cdot -\omega t)),(\Phi'')_{\mu_2}(\lambda\cdot),\xi_k)\right).
\end{align*}

Finally, we set
\begin{align*}
R^{\tme} &= \sum_{i=1}^8 R^{\tme,i}+\sum_{i=1}^4\tilde{R}^{\tme,2i}\\
r^{\tme} &= \sum_{i=1}^8 r^{\tme,i}.
\end{align*}

\subsection{\texorpdfstring{Analysis of the sixth and seventh line of \eqref{eq: iteration eq}}{Analysis of the sixth and seventh line of the iteration equation}}
Since $\mathbb{P}r_0 = r_0 - \nabla p^r$ for some $p^r$, we see that
\begin{align*}
\partial_t v - r_0 & = -\nabla p^r,
\end{align*}
i.e. it only remains a part that can be put into the new pressure and we define
\begin{align*}
\pi_2 = p^r.
\end{align*}
\subsection{Definition of the new error}
Altogether, we define
\begin{align*}
R^1 &= -\left(R^{\operatorname{lin},1} + R^{\operatorname{lin},2} +R^{\operatorname{lin},3} + R^\kappa +R^{\quadr}+ R^Y  + R^{\tme} \right),\\
r_1 &= -\left(r^{\quadr}+ r^Y + r^{\tme}\right),\\
p_1 &= p_0 + \pi_1 + \pi_2 +\frac{1}{2}\operatorname{tr}R^1.
\end{align*}

\section{Estimates of the new error}
We will now estimate the different parts of $R_1$ and $r_1$ that were defined in the previous section.
\subsection{\texorpdfstring{Estimates of the symmetric tensor $R_1$}{Estimates of the matrix error}}
\begin{lemma}[Estimate of $R^{\operatorname{lin},1} $]\label{lemma: est rlin}
It holds
\begin{align*}
\|R^{\operatorname{lin},1}(t) \|_{L^1(\R^2)}\leq C(R_0,u_0,e,\delta,\kappa,\varepsilon)\left(\mu_1^{-\frac{1}{6}}\mu_2^{-\frac{1}{6}} + \frac{\mu_1^{\frac{1}{2}}\mu_2^{\frac{1}{2}}}{\omega}\right) +2 \|r_0\|_{C_tL^2_x} \|u_0(t)\|_{L^2(\R^2)}.
\end{align*}
\end{lemma}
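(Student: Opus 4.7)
The plan is to decompose $u_1 - u_0 = u^p + u^c + u^t + v$ and treat each contribution to $R^{\operatorname{lin},1}$ separately, bounding $\|u_0 \otimes X + X \otimes u_0\|_{L^1} \leq 2\|u_0\|_{L^r}\|X\|_{L^{r'}}$ via H\"older's inequality with the conjugate exponent that best matches the available estimates on $X$.

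For the principal perturbation $u^p$, the natural choice is the pairing $L^3 \times L^{3/2}$. Indeed, $u^p$ is \emph{not} small in $L^2$ (it is of order $\delta^{1/2}$), so $L^2\times L^2$ would be useless here; this is the only genuine obstacle in the proof. But Lemma \ref{lemma: estimate up} with $s = 3/2$ gives $\|u^p(t)\|_{L^{3/2}} \lesssim \mu_1^{-1/6}\mu_2^{-1/6}$, and since $u_0 \in C_t L^3_x$ by assumption, this term produces exactly the $\mu_1^{-1/6}\mu_2^{-1/6}$ contribution with a constant depending on $\|u_0\|_{C_t L^3_x}$.

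For $u^c$, I would use the same $L^3 \times L^{3/2}$ pairing: Lemma \ref{lemma: estimate correctors} gives $\|u^c(t)\|_{L^{3/2}} \lesssim \mu_1^{5/6}\mu_2^{-7/6}$. Writing $\mu_1^{5/6}\mu_2^{-7/6} = (\mu_1/\mu_2)\,\mu_1^{-1/6}\mu_2^{-1/6}$ and using $\mu_2 \gg \mu_1$ shows this contribution is dominated by the $u^p$ term and hence absorbed into $\mu_1^{-1/6}\mu_2^{-1/6}$ (at the cost of enlarging the constant). For the temporal corrector $u^t$, the standard $L^2 \times L^2$ pairing together with Lemma \ref{lemma: estimate correctors} yields $\|u_0 \otimes u^t\|_{L^1} \lesssim \|u_0\|_{C_t L^2_x} \mu_1^{1/2}\mu_2^{1/2}/\omega$, giving the second term in the statement (again absorbing $\|u_0\|_{C_t L^2_x}$ into the constant).

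Finally, for the Leray-projected integral $v$, Lemma \ref{lemma: v in L2} gives $\|v(t)\|_{L^2(\R^2)} \leq \|r_0\|_{C_t L^2_x}$, so the $L^2 \times L^2$ pairing yields $\|u_0 \otimes v + v \otimes u_0\|_{L^1} \leq 2\|u_0(t)\|_{L^2}\|r_0\|_{C_t L^2_x}$, which is precisely the last term of the desired bound (and is kept explicit because it must be reabsorbed in the inductive estimate \eqref{eq: assumption r} in the proof of the Main Theorem, rather than hidden in a generic constant). Summing these four contributions completes the proof.
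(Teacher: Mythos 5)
Your proof is correct and takes essentially the same approach as the paper: Hölder with the $L^3\times L^{3/2}$ pairing for the $u^p$ term (the key observation, since $\|u^p\|_{L^2}\sim\delta^{1/2}$ is not small) and $L^2\times L^2$ for $u^t$ and $v$, invoking the same lemmas. The one small difference is that you also use $L^3\times L^{3/2}$ for $u^c$, giving $\mu_1^{5/6}\mu_2^{-7/6}$ which you correctly absorb into $(\mu_1\mu_2)^{-1/6}$, whereas the paper pairs $u^c$ in $L^2\times L^2$ and carries an extra $\mu_1/\mu_2$ term in its intermediate bound; both versions are fine for the downstream parameter choices.
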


\begin{proof}
Using Hölder's inequality and Lemma \ref{lemma: estimate up}, Lemma \ref{lemma: estimate correctors} and Lemma \ref{lemma: v in L2}, we have
\begin{align*}
\|R^{\operatorname{lin},1}(t)\|_{L^1(\R^2)} &\leq 2 \|u_0(t)\|_{L^2(\R^2)}\left(\|u^c(t)\|_{L^2(\R^2)} + \|u^t(t)\|_{L^2(\R^2)}+ \|v(t)\|_{L^2(\R^2)} \right)\\
&\hspace{0,3cm}   + 2  \|u_0(t)\|_{L^3(\R^2)}\|u^p(t)\|_{L^\frac{3}{2}(\R^2)} \\
&\leq C(R_0,u_0,e,\delta,\kappa,\varepsilon)\left(\frac{\mu_1}{\mu_2}+\mu_1^{-\frac{1}{6}}\mu_2^{-\frac{1}{6}} + \frac{\mu_1^{\frac{1}{2}}\mu_2^{\frac{1}{2}}}{\omega}\right) +2 \|r_0\|_{C_tL^2_x} \|u_0(t)\|_{L^2(\R^2)}.\qedhere
\end{align*}
\end{proof}

\begin{lemma}[Estimate of $R^{\operatorname{lin},2} $]\label{lemma: est rlin2}
It holds
\begin{align*}
\|R^{\operatorname{lin},2}(t) \|_{L^1(\R^2)}\leq  C(R_0,r_0,e,\delta,\kappa,\varepsilon) \left(\frac{\mu_1}{\mu_2}+\mu_1^{-\frac{1}{6}}\mu_2^{-\frac{1}{6}} + \frac{\mu_1^{\frac{1}{2}}\mu_2^{\frac{1}{2}}}{\omega}\right).
\end{align*}
\end{lemma}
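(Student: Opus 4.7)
The strategy is entirely parallel to the proof of Lemma \ref{lemma: est rlin}, and in fact the computation needed already appeared implicitly inside the proof of Lemma \ref{lemma: energy inc} (see estimate \eqref{est: rlin2}). The plan is to rewrite $R^{\operatorname{lin},2}$ using the explicit decomposition $u_1 - u_0 = w + u^t + v = u^p + u^c + u^t + v$, which gives $u_1 - u_0 - u^p = u^c + u^t + v$, so that
\begin{equation*}
R^{\operatorname{lin},2} = (u^c + u^t + v)\otimes u^p + u^p \otimes (u^c + u^t + v).
\end{equation*}
Then apply the triangle inequality and estimate each piece by a suitable H\"older pairing.

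The key point is choosing the right H\"older pairing for the term containing $v$: pairing $u^p$ in $L^2$ with $v$ in $L^2$ would produce a term only as small as $\|r_0\|_{C_tL^2_x}$, which is too large (it would survive in the statement with a factor independent of the concentration parameters). Instead, I use $\|u^p\|_{L^{3/2}}\|v\|_{L^3}$: by Lemma \ref{lemma: estimate up} (applied with $s = 3/2$) we have $\|u^p(t)\|_{L^{3/2}(\R^2)} \lesssim \mu_1^{-1/6}\mu_2^{-1/6}$, and by Lemma \ref{lemma: v in L2} with $s=3$ we have $\|v(t)\|_{L^3(\R^2)} \leq \|\mathbb{P}\|_{\mathcal{L}(L^3)}\|r_0\|_{C_tL^3_x}$, so the product is controlled by $C(r_0)\,\mu_1^{-1/6}\mu_2^{-1/6}$. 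For the $u^c$ and $u^t$ terms I pair in $L^2\times L^2$, using the refined bound $\|u^p(t)\|_{L^2} \leq 10\delta^{1/2} + C\lambda^{-1/2}$ (at worst a constant in $\delta$) together with $\|u^c(t)\|_{L^2}\lesssim \mu_1/\mu_2$ from Lemma \ref{lemma: estimate correctors} and $\|u^t(t)\|_{L^2}\lesssim \mu_1^{1/2}\mu_2^{1/2}/\omega$ from the same lemma.

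Collecting the three contributions yields
\begin{equation*}
\|R^{\operatorname{lin},2}(t)\|_{L^1(\R^2)} \leq C(R_0, r_0, e, \delta, \kappa, \varepsilon)\left(\frac{\mu_1}{\mu_2} + \mu_1^{-1/6}\mu_2^{-1/6} + \frac{\mu_1^{1/2}\mu_2^{1/2}}{\omega}\right),
\end{equation*}
which is exactly the claimed bound. There is no real obstacle here; the only subtlety is the H\"older choice for the $v$-factor, which is dictated by the requirement that the constant absorb $\|r_0\|$ while the rate in the concentration parameters remains nontrivial.
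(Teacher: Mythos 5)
Your proof is correct and matches the paper's approach exactly: the paper also pairs $u^p$ with $u^c+u^t$ in $L^2\times L^2$ and with $v$ in $L^{3/2}\times L^3$, then points to the estimate already carried out as \eqref{est: rlin2} inside the proof of Lemma \ref{lemma: energy inc}. You have correctly identified the one subtlety (the H\"older split for the $v$-term) and resolved it the same way.
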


\begin{proof}
We have
\begin{align*}
\|R^{\operatorname{lin},2}(t) \|_{L^1(\R^2)}&\leq 2 \|u^p(t)\|_{L^2(\R^2)}\left( \|u^c(t)\|_{L^2(\R^2)}  + \|u^t(t)\|_{L^2(\R^2)}\right)\\
&\hspace{0,3cm} + 2 \|u^p(t)\|_{L^\frac{3}{2}(\R^2)} \|v(t)\|_{L^3(\R^2)}
\end{align*}
and this was already estimated in \eqref{est: rlin2}.
\end{proof}

\begin{lemma}[Estimate of $R^{\operatorname{lin},3} $]\label{lemma: est rlin3}
It holds
\begin{align*}
\|R^{\operatorname{lin},3}(t) \|_{L^1(\R^2)}\leq  C(R_0,u_0,e,\delta,\kappa,\varepsilon)\left(\left(\frac{\mu_1}{\mu_2}\right)^2 + \left(\frac{\mu_1^\frac{1}{2}\mu_2^\frac{1}{2}}{\omega}\right)^2\right) + 4\|r_0\|_{C_tL^2_x}^2.
\end{align*}
\end{lemma}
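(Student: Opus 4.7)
The plan is to exploit directly the definition $u_1 - u_0 - u^p = u^c + u^t + v$ (recalling $u_1 = u_0 + w + u^t + v$ and $w = u^p + u^c$), so that
\[
R^{\operatorname{lin},3}(t) = (u^c + u^t + v)(t) \otimes (u^c + u^t + v)(t).
\]
The pointwise identity $|a\otimes a| = |a|^2$ then gives immediately
\[
\|R^{\operatorname{lin},3}(t)\|_{L^1(\R^2)} = \|u^c(t) + u^t(t) + v(t)\|_{L^2(\R^2)}^2.
\]

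Next I would expand the square pointwise and absorb the three cross terms using $2ab \leq a^2 + b^2$, obtaining $|u^c + u^t + v|^2 \leq 3(|u^c|^2 + |u^t|^2 + |v|^2)$, and therefore
\[
\|R^{\operatorname{lin},3}(t)\|_{L^1(\R^2)} \leq 3\|u^c(t)\|_{L^2(\R^2)}^2 + 3\|u^t(t)\|_{L^2(\R^2)}^2 + 3\|v(t)\|_{L^2(\R^2)}^2.
\]

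Finally I would invoke Lemma \ref{lemma: estimate correctors} to bound the first two summands by $C(R_0,u_0,e,\delta,\kappa,\varepsilon)\bigl((\mu_1/\mu_2)^2 + (\mu_1^{1/2}\mu_2^{1/2}/\omega)^2\bigr)$ (absorbing the constant $3$ into the $C$), and Lemma \ref{lemma: v in L2} with $s=2$ to bound $\|v(t)\|_{L^2(\R^2)}^2 \leq \|r_0\|_{C_tL^2_x}^2$; this yields the term $3\|r_0\|_{C_tL^2_x}^2 \leq 4\|r_0\|_{C_tL^2_x}^2$ appearing in the statement.

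No step here is expected to be an obstacle: this is a direct consequence of the estimates already established in Section \ref{sec: estimates perturbations} combined with an elementary inequality for sums of squares. In fact the three quantities $\|u^c\|_{L^2}^2$, $\|u^t\|_{L^2}^2$ and $\|v\|_{L^2}^2$ already appeared, with the same scaling in $\mu_1,\mu_2,\omega$, when expanding $\int_{\R^2} |u_1|^2\,\dx$ in the proof of Lemma \ref{lemma: energy inc}, so the bookkeeping is essentially identical and simply needs to be collected here with care to keep the dependence on $r_0$ explicit (not buried in the constant $C$).
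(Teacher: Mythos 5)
Your proposal is correct and follows essentially the same route as the paper: identify $u_1-u_0-u^p=u^c+u^t+v$, bound $\|R^{\operatorname{lin},3}(t)\|_{L^1}$ by a constant times $\|u^c(t)\|_{L^2}^2+\|u^t(t)\|_{L^2}^2+\|v(t)\|_{L^2}^2$, and invoke Lemma \ref{lemma: estimate correctors} and Lemma \ref{lemma: v in L2}. The only cosmetic difference is that you use the sharp constant $3$ from $|a+b+c|^2\leq 3(|a|^2+|b|^2+|c|^2)$ whereas the paper writes $4$; both are compatible with the stated bound $4\|r_0\|_{C_tL^2_x}^2$.
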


\begin{proof}
Since $$R^{\operatorname{lin},3}=  (u_1-u_0-u^p)\otimes (u_1-u_0-u^p) = (u^c + u^t + v)\otimes (u^c+ u^t + v),$$ we have
\begin{align*}
\|R^{\operatorname{lin},3}(t) \|_{L^1(\R^2)}\leq 4\left(\|u^c(t)\|_{L^2(\R^2)}^2  + \|u^t(t)\|^2_{L^2(\R^2)} + \|v(t)\|^2_{L^2(\R^2)}\right),
\end{align*}
hence the assertion follows from Lemma \ref{lemma: estimate correctors} and Lemma \ref{lemma: v in L2}.
\end{proof}

\begin{lemma}[Estimate of $R^\kappa$]\label{lemma: est rkappa}
It holds 
\begin{align*}
\|R^\kappa(t)\|_{L^1(\R^2)}\leq \frac{\eta}{2}.
\end{align*}
\end{lemma}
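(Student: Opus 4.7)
The proof should be essentially immediate from the definition of $R^\kappa$ and the choice of $\kappa$ in \eqref{wishlist: kappa1}. My plan is as follows.

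First I would rewrite $R^\kappa = \chi_\kappa^2 \Rtr - \Rtr = (\chi_\kappa^2 - 1)\Rtr$ and observe the two defining properties of the cutoff $\chi_\kappa$: it satisfies $0 \leq \chi_\kappa \leq 1$ everywhere, and $\chi_\kappa \equiv 1$ on $B_\kappa$. Consequently $|\chi_\kappa^2 - 1| \leq 1$ pointwise and $\chi_\kappa^2 - 1$ vanishes on $B_\kappa$, so the integrand is supported in $\R^2 \setminus B_\kappa$.

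Next, I would estimate
\begin{align*}
\|R^\kappa(t)\|_{L^1(\R^2)} = \int_{\R^2 \setminus B_\kappa} |\chi_\kappa^2(x) - 1|\,|\Rtr(x,t)|\,\dx \leq \|\Rtr(t)\|_{L^1(\R^2 \setminus B_\kappa)}.
\end{align*}
The conclusion then follows directly from the condition \eqref{wishlist: kappa1}, which was precisely the criterion used to fix $\kappa$ at the start of the construction, giving $\|\Rtr(t)\|_{L^1(\R^2\setminus B_\kappa)} \leq \eta/2$ for every $t \in [0,1]$.

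There is no real obstacle here: the estimate is essentially a bookkeeping statement that encodes why $\kappa$ was chosen in the way it was. The only thing to keep in mind is uniformity in $t$, which is ensured because \eqref{wishlist: kappa1} is stated for all $t \in [0,1]$ (using continuity of $\Rtr$ in time and dominated convergence if one wishes to justify the existence of such a $\kappa$).
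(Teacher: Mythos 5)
Your proof is correct and follows exactly the route the paper intends: the paper's own proof is the one-line remark that the bound holds by the choice of $\kappa$ in \eqref{wishlist: kappa1}, and your argument simply spells out why, namely that $R^\kappa = (\chi_\kappa^2-1)\Rtr$ is supported outside $B_\kappa$ with $|\chi_\kappa^2-1|\le 1$, so $\|R^\kappa(t)\|_{L^1(\R^2)}\le\|\Rtr(t)\|_{L^1(\R^2\setminus B_\kappa)}\le \eta/2$. Nothing further to add.
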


\begin{proof}
This holds because of our choice of $\kappa$ in \eqref{wishlist: kappa1}.
\end{proof}

\begin{lemma}[Estimate of $R^{\quadr}$]\label{lemma: est rquadr}
It holds 
\begin{align*}
\|R^{\quadr}(t)\|_{L^1(\R^2)}\leq \frac{C(R_0,u_0,e,\delta,\kappa,\varepsilon,N)}{\lambda}.
\end{align*}
\end{lemma}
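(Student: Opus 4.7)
The plan is to obtain the $L^1$ bound on $R^{\quadr}$ by a direct application of the improved antidivergence operator $\tilde S_N$ from Lemma \ref{antidivergence}(ii) together with its quantitative estimate in Remark \ref{rem: improved antidiv}. Indeed, the expression $R^{\quadr} = \sumk \tilde S_N(\nabla a_k^2, T_k)$ with $T_k := W_k^p\otimes W_k^p - \frac{\xi_k}{|\xi_k|}\otimes \frac{\xi_k}{|\xi_k|}$ is precisely of the form handled by that operator, and the whole purpose of $\tilde S_N$ is to convert one factor of spatial oscillation at frequency $\lambda$ into a $1/\lambda$ gain in $L^1(\R^2)$.

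The key observation is that each $T_k$ can be written as $(\tilde T_k)_\lambda$ for some $\tilde T_k\in C^\infty_0(\tor;\mathcal{S})$. More precisely, since $\lambda\in\mathbb{N}$ and $\xi_k\in\mathbb{N}^2$, the tensor $T_k$ is genuinely $\tfrac{1}{\lambda}$-periodic on $\tor$, hence oscillates at frequency $\lambda$. Moreover, its mean over one periodic box vanishes: this is exactly the content of Proposition \ref{prop: building blocks}(ii). Finally, by Proposition \ref{prop: building blocks}(iii) and Lemma \ref{lemma: small w est},
\[
\|\tilde T_k\|_{L^1(\tor)} = \|T_k(\cdot,t)\|_{L^1(\tor)} \leq \|W_k^p(\cdot,t)\|_{L^2(\tor)}^2 + 1 \leq C
\]
uniformly in $t,\lambda,\mu_1,\mu_2,\omega$.

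Consequently, Remark \ref{rem: improved antidiv} gives
\[
\|R^{\quadr}(t)\|_{L^1(\R^2)}\leq \sumk \frac{C(\supp \nabla a_k^2)}{\lambda}\,\|\tilde T_k\|_{L^1(\tor)}\,\|\nabla a_k^2\|_{C^{N-1}(\R^2)}.
\]
The support of $\nabla a_k^2$ is contained in $[-\kappa-1,\kappa+1]^2$ thanks to the cutoff $\chi_\kappa$, so the geometric constant depends only on $\kappa$; and \eqref{est ak} controls $\|\nabla a_k^2\|_{C^{N-1}(\R^2)}$ by a constant $C(R_0,u_0,e,\delta,\kappa,\varepsilon,N)$. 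Combining these three bounds yields the claimed estimate.

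There is no real obstacle in this argument: it is the paradigmatic estimate for which the $\tilde S_N$ machinery of Lemma \ref{antidivergence} was constructed. The only items to verify are that the oscillating factor $T_k$ is mean-zero on $\tor$ and that it fits the scaling $T=\tilde T_\lambda$; both follow immediately from the construction of the building blocks. Note in particular that we do not need to exploit the concentration parameters $\mu_1,\mu_2$ here: only the oscillation $\lambda$ is used, which is why the constant on the right-hand side depends on them only through the crude bound on $\|\nabla a_k^2\|_{C^{N-1}}$ (which is itself $\lambda,\mu_1,\mu_2,\omega$-independent).
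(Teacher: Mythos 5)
Your proof is correct and follows essentially the same route as the paper's (one-line) argument: apply Remark~\ref{rem: improved antidiv} to $\tilde S_N(\nabla a_k^2, T_k)$, verify that $T_k$ is a mean-zero, $\tfrac{1}{\lambda}$-periodic tensor (Proposition~\ref{prop: building blocks}(ii)) whose $L^1(\tor)$-norm is bounded uniformly via $\|W_k^p\|_{L^2(\tor)}^2 = \|w_k\|_{L^2}^2 = 1$, and absorb $\|\nabla a_k^2\|_{C^{N-1}}$ into the constant using \eqref{est ak}. The additional checks you supply (mean-zero, the scaling $T_k = (\tilde T_k)_\lambda$, and the $\mu_1,\mu_2,\omega$-independence of $\|\tilde T_k\|_{L^1}$) are exactly the details the paper leaves implicit.
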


\begin{proof}
This follows directly from Remark \ref{rem: improved antidiv}, the scaling of $W_k^p$ (see Proposition \ref{prop: building blocks}) and the estimates on $a_k$ in \eqref{est ak}.
\end{proof}

\begin{lemma}[Estimate of $R^{Y}$]\label{lemma: est ry}
It holds 
\begin{align*}
\|R^{Y}(t)\|_{L^1(\R^2)}\leq \frac{C(R_0,u_0,e,\delta,\kappa,\varepsilon,N)}{\omega\lambda}.
\end{align*}
\end{lemma}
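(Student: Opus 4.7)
The strategy is to apply the improved antidivergence estimate from Remark \ref{rem: improved antidiv} directly to the definition
\begin{align*}
(r^{Y,1}, R^Y) = -\sumk S_N\!\left(\partial_t a_k^2,\; Y_k - \tfrac{1}{\omega}\xi_k\right),
\end{align*}
reading off a factor $\frac{1}{\lambda}$ from the $\lambda$-oscillation and a factor $\frac{1}{\omega}$ from the prefactor built into $Y_k$. These are exactly the two small parameters we want to harvest.

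First, I would fix $t\in[0,1]$ and rewrite the second argument in the form required by Remark \ref{rem: improved antidiv}: since
\begin{align*}
Y_k(x,t) = \tfrac{1}{\omega}(\varphi_{\mu_1}^k)^2\!\bigl(\lambda(\xi_k\cdot x - \omega t)\bigr)\,(\varphi_{\mu_2})^2\!\bigl(\lambda\xi_k^\perp\cdot x\bigr)\,\xi_k,
\end{align*}
we can write $Y_k(x,t) - \frac{1}{\omega}\xi_k = U_{k,t}(\lambda x)$ where $U_{k,t}\in C_0^\infty(\tor;\R^2)$ has zero mean on $\tor$ (this is why we subtracted the constant $\frac{1}{\omega}\xi_k$ in the definition, recalling $\int_\tor Y_k\,\dx = \frac{1}{\omega}\xi_k$ from Proposition \ref{prop: building blocks} and Lemma \ref{lemma: small w est}). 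Applying Lemma \ref{lemma: small w est} with $s=1$ to the generating profile $q_k = \omega Y_k/\xi_k$ yields
\begin{align*}
\|U_{k,t}\|_{L^1(\tor)} \leq \|Y_k(\cdot,t)\|_{L^1(\tor)} + \tfrac{1}{\omega}|\xi_k| \leq \frac{C}{\omega},
\end{align*}
uniformly in $t$ and in the other parameters.

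Second, I would apply Remark \ref{rem: improved antidiv} to $S_N(\partial_t a_k^2(\cdot,t),\,U_{k,t}(\lambda\,\cdot))$. Since $\supp \partial_t a_k^2(\cdot,t) \subset [-\kappa-1,\kappa+1]^2$ by construction of $a_k$, the improved antidivergence estimate gives
\begin{align*}
\|R^Y(t)\|_{L^1(\R^2)} \leq \sumk \frac{C(\kappa)}{\lambda}\,\|U_{k,t}\|_{L^1(\tor)}\,\|\partial_t a_k^2\|_{C^{N-1}(\R^2\times[0,1])} \leq \frac{C(R_0,u_0,e,\delta,\kappa,\varepsilon,N)}{\omega \lambda},
\end{align*}
where we used the bound $\|U_{k,t}\|_{L^1(\tor)} \leq C/\omega$ from above together with the $C^{N-1}$ estimate on $\partial_t a_k^2$ provided by \eqref{est ak}. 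The bound is uniform in $t$, yielding the claim.

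There is no real obstacle here; the only point requiring care is to track where the two small factors $\frac{1}{\omega}$ and $\frac{1}{\lambda}$ come from (the former from the amplitude of $Y_k$, the latter from the oscillation exploited by $S_N$), and to make sure that the subtraction of the mean $\frac{1}{\omega}\xi_k$ indeed produces a zero-mean periodic profile so that the improved antidivergence is applicable.
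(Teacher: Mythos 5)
Your proof is correct and follows essentially the same route as the paper's (terse) argument: the paper simply invokes Remark \ref{rem: improved antidiv} as for $R^{\quadr}$, and your write-up just makes explicit the two ingredients that remark requires — the compactly supported $C^{N-1}$ bound on $\partial_t a_k^2$ from \eqref{est ak}, and the $L^1(\tor)$ bound of order $\omega^{-1}$ on the zero-mean periodic profile $Y_k - \frac{1}{\omega}\xi_k$ coming from $\|q_k\|_{L^1(\tor)} \lesssim \omega^{-1}$ in Lemma \ref{lemma: small w est}.
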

\begin{proof}
As for $R^{\quadr}$, this is a direct application of Remark \ref{rem: improved antidiv}.
\end{proof}

\begin{lemma}[Estimate of $R^{\tme}$]\label{lemma: est rtime}
It holds
\begin{align*}
\|R^{\tme}(t)\|_{L^1(\R^2)} \leq C(R_0,u_0,e,\delta,\kappa,\varepsilon, N) \left( \lambda^{-1}\mu_1^{-\frac{1}{2}}\mu_2^{-\frac{1}{2}} + \omega \mu_1^\frac{1}{2}\mu_2^{-\frac{3}{2}} \right).
\end{align*}
\end{lemma}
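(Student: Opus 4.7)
My plan is to bound each of the twelve summands that make up $R^{\tme}$ separately, namely the eight pieces $R^{\tme,i}$ (defined via the bilinear operators $S_N$ or $\tilde{S}_N$) and the four ``explicit'' pieces $\tilde{R}^{\tme,2i}$ (defined directly as products of a coefficient and a matrix $A$ or $B$). For the first group, I will invoke Remark \ref{rem: improved antidiv}: since the second argument of $S_N$ or $\tilde{S}_N$ is a $\lambda$-oscillating object, each $S_N$/$\tilde{S}_N$ provides a factor $\frac{1}{\lambda}$ together with the $L^1(\tor)$ norm of the associated building block and the $C^{N-1}$ norm of the compactly supported coefficient. The latter is absorbed into the constant $C(R_0,u_0,e,\delta,\kappa,\varepsilon,N)$, using the bounds on $a_k$, $\partial_t a_k$, $\nabla a_k$, $b_k^1$, $b_k^2$ from \eqref{est ak} and the definition of $b_k^j$.

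For the building blocks themselves, I will plug $s=1$ into the estimates of Lemma \ref{lemma: small w est} to obtain $\|w_k\|_{L^1(\tor)}\lesssim \mu_1^{-1/2}\mu_2^{-1/2}$, $\|w_k^c\|_{L^1(\tor)}\lesssim \mu_1^{1/2}\mu_2^{-3/2}$ and $\|w_k^{cc}\|_{L^1(\tor)}\lesssim \lambda^{-1}\mu_1^{-1/2}\mu_2^{-3/2}$. Combining these with the $\frac{1}{\lambda}$ from the antidivergence yields
\begin{align*}
\|R^{\tme,1}\|_{L^1(\R^2)}&\lesssim \lambda^{-1}\mu_1^{-1/2}\mu_2^{-1/2},\\
\|R^{\tme,3}\|_{L^1(\R^2)}&\lesssim \lambda^{-1}\mu_1^{1/2}\mu_2^{-3/2},\\
\|R^{\tme,5}\|_{L^1(\R^2)},\;\|R^{\tme,7}\|_{L^1(\R^2)}&\lesssim \lambda^{-2}\mu_1^{-1/2}\mu_2^{-3/2}.
\end{align*}

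For the four $\tilde{R}^{\tme,2i}$ terms I will use the $L^1(\tor)$ estimate on $A,B$ from \eqref{est A B} with $l=0,s=1$, which gives $\|A\|_{L^1(\tor)},\|B\|_{L^1(\tor)}\lesssim \lambda^{-1}\mu_1^{-1/2}\mu_2^{-3/2}$ (the specific choices of $\psi_1,\psi_2,\Psi$ in the four cases only change the constant). Multiplying by the prefactors $\frac{\omega\lambda\mu_1}{|\xi_k|}$, $\frac{\omega\lambda\mu_1^2}{\mu_2|\xi_k|}$, $\frac{\omega\mu_1}{\mu_2|\xi_k|}$ respectively, and accounting for the area $(2\kappa+2)^2$ on which the bounded coefficients $a_k,b_k^1,b_k^2$ are supported, produces the leading contribution $\omega\mu_1^{1/2}\mu_2^{-3/2}$ from $\tilde{R}^{\tme,2}$, while the other three are of the form $\omega\mu_1^{3/2}\mu_2^{-5/2}$ or $\omega\lambda^{-1}\mu_1^{1/2}\mu_2^{-5/2}$. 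The four ``dual'' terms $R^{\tme,2},R^{\tme,4},R^{\tme,6},R^{\tme,8}$ are bounded identically using Remark \ref{rem: improved antidiv} applied to $\tilde{S}_N$; they pick up an additional $\lambda^{-1}$ and are thus strictly smaller than the corresponding $\tilde{R}$ terms.

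Summing and keeping only the dominant contributions, using repeatedly that $\mu_2\gg\mu_1$ and $\lambda\gg 1$ to absorb the subordinate terms, one arrives at
\begin{equation*}
\|R^{\tme}(t)\|_{L^1(\R^2)}\leq C(R_0,u_0,e,\delta,\kappa,\varepsilon,N)\bigl(\lambda^{-1}\mu_1^{-1/2}\mu_2^{-1/2}+\omega\mu_1^{1/2}\mu_2^{-3/2}\bigr),
\end{equation*}
which is the desired estimate. The main technical obstacle is the careful bookkeeping of the three independent scales $\lambda,\mu_1,\mu_2$ across twelve different terms, and in particular verifying that the explicit prefactors $\omega\lambda\mu_1$, $\omega\lambda\mu_1^2/\mu_2$ etc.\ coming from differentiating $W_k^p,W_k^c,W_k^{cc}$ in time are precisely compensated by the $\lambda^{-1}$ gain and the concentration factors hidden inside $\|A\|_{L^1(\tor)}$ and $\|B\|_{L^1(\tor)}$, so that the only surviving large factor is $\omega$, multiplied by the concentration-small quantity $\mu_1^{1/2}\mu_2^{-3/2}$.
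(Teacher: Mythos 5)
Your proof is correct and follows essentially the same approach as the paper: you decompose $R^{\tme}$ into its twelve constituent pieces, apply Remark~\ref{rem: improved antidiv} to the eight $S_N$/$\tilde S_N$ terms (gaining a $\lambda^{-1}$ together with the $C^{N-1}$ bound on the slow coefficient), use the $s=1$ estimates from Lemma~\ref{lemma: small w est}, Proposition~\ref{prop: building blocks} and \eqref{est A B} for the fast objects, and estimate the four $\tilde R^{\tme,2i}$ terms directly. Your individual bounds and the identification of the two dominant contributions ($R^{\tme,1}$ and $\tilde R^{\tme,2}$) all agree with the paper's computation.
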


\begin{proof}
By Remark \ref{rem: improved antidiv} and the estimates for the operators $A$ and $B$ in \eqref{est A B}, Proposition \ref{prop: building blocks} and \eqref{est ak} we have
\begin{align*}
\|R^{\tme,1}\|_{L^1(\R^2)}&\leq C(\kappa)\|\partial_t a_k\|_{C^{N-1}(\R^2)}\frac{1}{\lambda}\|W_k^p\|_{L^1(\tor)}\\
&\leq  C(R_0,u_0,e,\kappa,\delta,\varepsilon,N) \lambda^{-1}\mu_1^{-\frac{1}{2}}\mu_2^{-\frac{1}{2}},\\
\|\tilde{R}^{\tme,2}\|_{L^1(\R^2)}&\leq C(\kappa)  \omega\lambda\mu_1 \|a_k\|_{C(\R^2)} \|A((\varphi')^k_{\mu_1}(\lambda(\cdot - \omega t)),\varphi_{\mu_2}(\lambda\cdot),\xi_k)\|_{L^1(\tor)}\\
&\leq  C(R_0, u_0,e,\kappa,\delta,\varepsilon,N)\omega \mu_1^\frac{1}{2}\mu_2^{-\frac{3}{2}},\\
\|R^{\tme,2}\|_{L^1(\R^2)}&\leq C(\kappa)\omega\lambda\mu_1 \|a_k\|_{C^{N}(\R^2)} \|\dv^{-1}A((\varphi')^k_{\mu_1}(\lambda(\cdot - \omega t)),\varphi_{\mu_2}(\lambda\cdot),\xi_k)\|_{L^1(\tor)}
\\&\leq  C(R_0,u_0,e,\kappa,\delta,\varepsilon,N)\omega \lambda^{-1}\mu_1^\frac{1}{2}\mu_2^{-\frac{3}{2}},\\
\|R^{\tme,3}\|_{L^1(\R^2)}&\leq C(\kappa)\|\partial_t a_k\|_{C^{N-1}(\R^2)}\frac{1}{\lambda}\|W_k^c\|_{L^1(\tor)}\\
&\leq  C(R_0,u_0,e,\kappa,\delta,\varepsilon,N)\lambda^{-1}\mu_1^\frac{1}{2}\mu_2^{-\frac{3}{2}},\\
\|\tilde{R}^{\tme,4}\|_{L^1(\R^2)}&\leq  C(\kappa)\frac{\omega\lambda\mu_1^2}{\mu_2}\|a_k\|_{C(\R^2)}\| B((\varphi'')^k_{\mu_1}(\lambda(\cdot -\omega t)),(\Phi'')_{\mu_2}(\lambda\cdot),\xi_k)\|_{L^1(\tor)}\\
&\leq  C(R_0,u_0,e,\kappa,\delta,\varepsilon,N) \omega\mu_1^{\frac{3}{2}}\mu_2^{-\frac{5}{2}},\\
\|R^{\tme,4}\|_{L^1(\R^2)}&\leq C(\kappa) \frac{\omega\lambda\mu_1^2}{\mu_2} \|a_k\|_{C^N(\R^2)} \left\|\dv^{-1} B((\varphi'')^k_{\mu_1}(\lambda(\cdot -\omega t)),(\Phi'')_{\mu_2}(\lambda\cdot),\xi_k)\right\|\\
&\leq  C(R_0,u_0,e,\kappa,\delta,\varepsilon,N)\omega \lambda^{-1}\mu_1^{\frac{3}{2}}\mu_2^{-\frac{5}{2}},
\end{align*}
\begin{align*}
\|R^{\tme,5}\|_{L^1(\R^2)}&\leq C(\kappa)\|\partial_t b_k^1\|_{C^{N-1}(\R^2)}\frac{1}{\lambda}\|W_k^{cc,\parallel}\|_{L^1(\tor)}\\
&\leq  C(R_0,u_0,e,\kappa,\delta,\varepsilon,N)\lambda^{-2}\mu_1^{-\frac{1}{2}}\mu_2^{-\frac{3}{2}},\\
\|\tilde{R}^{\tme,6}\|_{L^1(\R^2)}&\leq C(\kappa)\frac{\omega\mu_1}{\mu_2} \|b_k^1\|_{C(\R^2)} \left\|A((\varphi')^k_{\mu_1}(\lambda(\cdot - \omega t)),(\Phi'')_{\mu_2}(\lambda\cdot),\xi_k)\right\|_{L^1(\tor)}\\
&\leq  C(R_0,u_0,e,\kappa,\delta,\varepsilon,N)\omega\lambda^{-1} \mu_1^{\frac{1}{2}}\mu_2^{-\frac{5}{2}},\\
\|R^{\tme,6}\|_{L^1(\R^2)}&\leq C(\kappa) \frac{\omega\mu_1}{\mu_2} \|b_k^1\|_{C^{N}(\R^2)} \left\|\dv^{-1}A((\varphi')^k_{\mu_1}(\lambda(\cdot - \omega t)),(\Phi'')_{\mu_2}(\lambda\cdot),\xi_k)\right\|_{L^1(\tor)}\\
&\leq  C(R_0,u_0,e,\kappa,\delta,\varepsilon,N)\omega\lambda^{-2} \mu_1^{\frac{1}{2}}\mu_2^{-\frac{5}{2}},\\
\|R^{\tme,7}\|_{L^1(\R^2)}&\leq C(\kappa)\|\partial_t b_k^2\|_{C^{N-1}(\R^2)}\frac{1}{\lambda}\|W_k^{cc,\perp}\|_{L^1(\tor)}\\
&\leq  C(R_0,u_0,e,\kappa,\delta,\varepsilon,N)\lambda^{-2}\mu_1^{-\frac{1}{2}}\mu_2^{-\frac{3}{2}},\\
\|\tilde{R}^{\tme,8}\|_{L^1(\R^2)}&\leq C(\kappa)\frac{\omega\mu_1}{\mu_2} \|b_k^2\|_{C(\R^2)} \left\|B((\varphi')^k_{\mu_1}(\lambda(\cdot - \omega t)),(\Phi'')_{\mu_2}(\lambda\cdot),\xi_k)\right\|_{L^1(\tor)}\\
&\leq  C(R_0,e,\kappa,\delta,\varepsilon)\omega\lambda^{-1} \mu_1^{\frac{1}{2}}\mu_2^{-\frac{5}{2}},\\
\|R^{\tme,8}\|_{L^1(\R^2)}&\leq  C(\kappa)\frac{\omega\mu_1}{\mu_2} \|b_k^2\|_{C^{N}(\R^2)} \left\|\dv^{-1}B((\varphi')^k_{\mu_1}(\lambda(\cdot - \omega t)),(\Phi'')_{\mu_2}(\lambda\cdot),\xi_k)\right\|_{L^1(\tor)}\\
&\leq  C(R_0,u_0,e,\kappa,\delta,\varepsilon,N)\omega\lambda^{-2} \mu_1^{\frac{1}{2}}\mu_2^{-\frac{5}{2}}.\qedhere
\end{align*}
Putting those estimate together yields the claim.
\end{proof}

\subsection{\texorpdfstring{Estimates of the vector $r_1$}{Estimates of the scalar error}}
In this subsection, we estimate the new error $r_1$. Since $r_1$ also enters into the next iteration (see the definition of $v$ in Section \ref{sec: perturbations}), we need an estimate on $\int_0^t \curl r_1(x,s)\,\ds$ in $H^p(\R^2)$ as well. The operators $S_N$ and $\tilde{S}_N$ guarantee that all parts of $r_1$ have compact supports, therefore one can use Remark \ref{rem: hardy atoms}, and we control the quantity $\left\|\int_0^t \curl r_1(\cdot,s)\,\ds\right\|_{H^p(\R^2)}$ by $\|r_1\|_{L^\infty(\R^2)}.$ 
\begin{lemma}[Estimate of $r^{\quadr}$]\label{lemma: rquad1N}
The function $r^{\quadr}$ has compact support and satisfies
\begin{align*}
\|r^{\quadr}(t)\|_{L^\infty(\R^2)} &\leq C(R_0,u_0,e,\delta,\kappa,\varepsilon,N) \frac{\mu_1\mu_2}{\lambda^{N}} \\
\|\int_0^t\curl r^{\quadr}(s)\,\ds\|_{H^p(\R^2)}&\leq  C(R_0,u_0,e,\delta,\kappa,\varepsilon,N) \frac{\mu_1\mu_2^2}{\lambda^{N-1}}.\qedhere
\end{align*}
\end{lemma}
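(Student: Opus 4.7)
The plan is to apply the improved antidivergence from Lemma \ref{antidivergence} / Remark \ref{rem: improved antidiv} directly to the definition of $r^{\quadr}$, and then pass from the resulting $L^\infty$ bound to the $H^p$ bound on the time-integrated curl via Remark \ref{rem: hardy atoms}. Compact support of $r^{\quadr}$ is automatic: $\tilde S_N$ outputs functions supported in $\supp(\nabla a_k^2)\subset B_{\kappa+1}$.

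First I would set up the ``frequency-$\lambda$ building block'' framework: write
\begin{align*}
W_k^p(x,t)\otimes W_k^p(x,t)-\frac{\xi_k}{|\xi_k|}\otimes\frac{\xi_k}{|\xi_k|}=T_k(\lambda x,t)
\end{align*}
for a smooth periodic, symmetric, mean-zero matrix field $T_k(\cdot,t)\in C_0^\infty(\tor,\mathcal S)$, the mean-zero condition being exactly Proposition \ref{prop: building blocks}(ii). The $L^\infty$ norms of $T_k$ and $\nabla T_k$ (in the periodic variable) are controlled by Proposition \ref{prop: building blocks}(iii) with $s=\infty$ together with $\mu_2\gg\mu_1$: the most expensive derivative falls in the $\xi_k^\perp$ direction and yields
\begin{align*}
\|T_k(\cdot,t)\|_{L^\infty(\tor)}\lesssim \mu_1\mu_2,\qquad \|\nabla T_k(\cdot,t)\|_{L^\infty(\tor)}\lesssim \mu_1\mu_2^2.
\end{align*}
Also, by \eqref{est ak}, every $C^m$-norm of $\nabla a_k^2$ is bounded by a constant depending only on $R_0,u_0,e,\delta,\kappa,\varepsilon,m$.

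Next I would plug this into Remark \ref{rem: improved antidiv} with $f=\nabla a_k^2$ and $T=T_k$, taking $l=0$ to obtain $\|r^{\quadr}(t)\|_{L^\infty(\R^2)}\lesssim \mu_1\mu_2/\lambda^N$ (which is the first stated estimate), and $l=1$ to obtain $\|\nabla r^{\quadr}(t)\|_{L^\infty(\R^2)}\lesssim \mu_1\mu_2^2/\lambda^{N-1}$.

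For the Hardy estimate, I would observe that $\int_0^t \curl r^{\quadr}(\cdot,s)\,\ds$ is supported in $B_{\kappa+1}$ and, being a curl of a compactly supported vector field, has zero mean on that ball. Since $\tfrac{2}{3}<p<1$, only the $0$th-moment condition in Definition \ref{dfn: hardy atoms}(iii) needs to be verified (Remark \ref{rem: hardy atoms}(2)), so Remark \ref{rem: hardy atoms}(1) applies and gives
\begin{align*}
\Bigl\|\int_0^t \curl r^{\quadr}(\cdot,s)\,\ds\Bigr\|_{H^p(\R^2)}\leq C\,|B_{\kappa+1}|^{1/p}\,\Bigl\|\int_0^t\curl r^{\quadr}(\cdot,s)\,\ds\Bigr\|_{L^\infty(\R^2)}\lesssim \kappa^{2/p}\,\|\nabla r^{\quadr}\|_{C_tL^\infty_x},
\end{align*}
and the previous $l=1$ bound closes the second estimate.

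No step is really an obstacle; everything is a direct assembly of already established inequalities. The only thing to be careful about is matching powers: the loss of one power of $\lambda$ in going from $l=0$ to $l=1$ in Remark \ref{rem: improved antidiv} is exactly compensated by the extra $\lambda\mu_2$ coming from differentiating the high-frequency factor $\varphi_{\mu_2}(\lambda \xi_k^\perp\cdot x)$, which is why the final $H^p$ bound carries $\mu_1\mu_2^2/\lambda^{N-1}$ instead of $\mu_1\mu_2/\lambda^N$.
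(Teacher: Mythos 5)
Your proposal is correct and follows essentially the same route as the paper: apply Remark \ref{rem: improved antidiv} (equivalently, the estimates in Lemma \ref{antidivergence}) with $f=\nabla a_k^2$ and the mean-zero periodic tensor $W_k^p\otimes W_k^p-\frac{\xi_k}{|\xi_k|}\otimes\frac{\xi_k}{|\xi_k|}$, using $l=0$ for the $L^\infty$ bound and $l=1$ for $\nabla r^{\quadr}$, then pass to $H^p$ via Remark \ref{rem: hardy atoms} since the time-integrated curl is supported in $B_{\kappa+1}$ and has vanishing integral. Your explicit observation that the zero-mean condition holds because it is the curl of a compactly supported field is a small clarification the paper leaves implicit, but the argument and the resulting powers of $\lambda,\mu_1,\mu_2$ agree exactly.
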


\begin{proof}
The compact support follows from the properties of $\tilde{S}_N$. By Remark \ref{rem: improved antidiv}, Proposition \ref{prop: building blocks} and \eqref{est ak}, we can estimate
\begin{align*}
\|r^{\quadr}(s)\|_{L^\infty(\R^2)}&\leq C(\kappa)\frac{1}{\lambda^{N}} \|a_k^2\|_{C^{N+1}(\R^2)} \left\|W_k^p\otimes W_k^p - \frac{\xi_k}{|\xi_k|}\otimes \frac{\xi_k}{|\xi_k|}\right\|_{L^\infty(\tor)}\\
&\leq C(R_0, u_0, e,\delta,\kappa,\varepsilon,N) \frac{\mu_1\mu_2}{\lambda^{N}} .
\end{align*}
For the curl, we use again Remark \ref{rem: improved antidiv} and obtain
\begin{align*}
\|\curl r^{\quadr}(s)\|_{L^\infty(\R^2)}&\leq C(\kappa)\|a^2\|_{C^{N+2}(\R^2)} \left\|\nabla\dv^{-N}\left(W_k^p\otimes W_k^p\right) \right\|_{L^\infty(\tor)}\\
&\leq C(R_0,u_0,e,\delta,\kappa,\varepsilon,N) \frac{\mu_1\mu_2^2}{\lambda^{N-1}} \text{ for all } s\in [0,1]
\end{align*}
and therefore also 
\begin{align*}
\|\int_0^t\curl r^{\quadr}(s)\,\ds\|_{L^\infty(\R^2)}&\leq C(R_0,u_0,e,\delta,\kappa,\varepsilon,N) \frac{\mu_1\mu_2^2}{\lambda^{N-1}}.
\end{align*}
Using that $\int_0^t\curl r^{\quadr}(s)\,\ds$ is supported in $B_{\kappa+1}$, we can apply Remark \ref{rem: hardy atoms} and obtain
\begin{align*}
\|\int_0^t\curl r^{\quadr}(s)\,\ds\|_{H^p(\R^2)}\leq  C(R_0,u_0,e,\delta,\kappa,\varepsilon,N) \frac{\mu_1\mu_2^2}{\lambda^{N-1}}.
\end{align*}
\end{proof}

\begin{lemma}[Estimate of $r^Y$]\label{lemma: rY}
The function $r^Y$ has compact support and satisfies
\begin{align*}
\|r^Y(t)\|_{L^\infty(\R^2)} &\leq C(R_0,u_0,e,\delta,\kappa,\varepsilon, N) \left(\frac{\mu_1\mu_2}{\omega\lambda^{N}}+\frac{1}{\omega}\right),  \\
\|\int_0^t\curl r^Y(s)\,\ds\|_{H^p(\R^2)} &\leq  C(R_0,u_0,e,\delta,\kappa,\varepsilon, N) \left(\frac{\mu_1\mu_2^2}{\omega\lambda^{N-1}} + \frac{1}{\omega}\right).
\end{align*}
\end{lemma}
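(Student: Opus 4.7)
The plan is to split $r^Y$ according to the decomposition $r^Y = r^{Y,1} + r^{Y,2}$ made in the definition, estimate the two pieces separately, and in each case convert the $L^\infty$ bound on the curl into the $H^p$ bound via the compact-support/atom argument (Remark \ref{rem: hardy atoms}), in complete analogy with Lemma \ref{lemma: rquad1N}.

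The compact support of $r^Y$ is inherited from $a_k$: both $r^{Y,1}$ (produced by $S_N$) and $r^{Y,2}$ (an explicit product with $\partial_t a_k^2$) are supported inside $\supp a_k\subset B_{\kappa+1}$. For $r^{Y,1}$, I would apply Remark \ref{rem: improved antidiv} with $f=\partial_t a_k^2$ and $u=\tilde q_k\xi_k$, where $\tilde q_k(y)=\frac{1}{\omega}(\varphi^k_{\mu_1})^2(y_1)(\varphi_{\mu_2})^2(y_2)-\frac{1}{\omega}$ is the mean-zero, un-rescaled profile corresponding to $Y_k-\frac{1}{\omega}\xi_k$ (the rotation $\Lambda_k$ and the time shift do not affect any of the norms involved). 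Using \eqref{eq: scaling}, one finds $\|\tilde q_k\|_{L^\infty}\leq C\mu_1\mu_2/\omega$ and $\|\nabla\tilde q_k\|_{L^\infty}\leq C\mu_1\mu_2^2/\omega$ (where the extra factor $\mu_2$ comes from differentiating $\varphi_{\mu_2}^2$, which dominates because $\mu_2\gg\mu_1$). Combined with \eqref{est ak} this yields
\begin{align*}
\|r^{Y,1}(t)\|_{L^\infty(\R^2)}&\leq \frac{C}{\lambda^N}\|\partial_t a_k^2\|_{C^{N}}\|\tilde q_k\|_{L^\infty(\tor)}\leq C(R_0,u_0,e,\delta,\kappa,\varepsilon,N)\,\frac{\mu_1\mu_2}{\omega\lambda^{N}},\\
\|\nabla r^{Y,1}(t)\|_{L^\infty(\R^2)}&\leq \frac{C}{\lambda^{N-1}}\|\partial_t a_k^2\|_{C^{N+1}}\|\nabla\tilde q_k\|_{L^\infty(\tor)}\leq C(R_0,u_0,e,\delta,\kappa,\varepsilon,N)\,\frac{\mu_1\mu_2^2}{\omega\lambda^{N-1}}.
\end{align*}

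For $r^{Y,2}=-\sum_k\frac{1}{\omega}\partial_t a_k^2\,\xi_k$ the estimate is immediate from \eqref{est ak}:
\begin{align*}
\|r^{Y,2}(t)\|_{L^\infty(\R^2)}+\|\nabla r^{Y,2}(t)\|_{L^\infty(\R^2)}\leq \frac{C(R_0,u_0,e,\delta,\kappa,\varepsilon)}{\omega}.
\end{align*}
Adding the two pieces gives the claimed $L^\infty$ bound on $r^Y$. For the Hardy-norm bound I would proceed as in Lemma \ref{lemma: rquad1N}: integrating in time (on $[0,1]$) preserves the $L^\infty$ bound on $\curl r^Y\leq \nabla r^Y$, so
\begin{align*}
\Big\|\int_0^t\curl r^Y(s)\,\ds\Big\|_{L^\infty(\R^2)}\leq C(R_0,u_0,e,\delta,\kappa,\varepsilon,N)\left(\frac{\mu_1\mu_2^2}{\omega\lambda^{N-1}}+\frac{1}{\omega}\right),
\end{align*}
and since $\int_0^t\curl r^Y(s)\,\ds$ is compactly supported in $B_{\kappa+1}$ and has vanishing integral (being the curl of a compactly supported vector field), Remark \ref{rem: hardy atoms} converts the $L^\infty$ bound into the $H^p$ bound, with $|B_{\kappa+1}|^{1/p}$ absorbed into the constant.

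The only delicate point is bookkeeping the scaling when applying Remark \ref{rem: improved antidiv}: the second argument of $S_N$ must be interpreted as $u_\lambda$ with $u$ the mean-zero periodic profile obtained by stripping the $\lambda$ and the rotation $\Lambda_k$ from $Y_k-\frac{\xi_k}{\omega}$, so that the derivative estimates on $u$ itself do not carry powers of $\lambda$ and each integration by parts in the definition of $S_N$ indeed produces a factor $\lambda^{-1}$; otherwise the estimate is routine.
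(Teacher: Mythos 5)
Your proposal is correct and follows essentially the same route as the paper: split $r^Y = r^{Y,1} + r^{Y,2}$, estimate $r^{Y,1}$ via Remark~\ref{rem: improved antidiv} applied with $f=\partial_t a_k^2$ and the mean-zero profile of $Y_k-\tfrac{1}{\omega}\xi_k$ as the oscillating factor, handle $r^{Y,2}$ by the trivial bound, and convert the resulting $L^\infty$ bounds on the time-integrated curl into $H^p$ bounds through the compact-support/cancellation argument of Remark~\ref{rem: hardy atoms}. The only cosmetic difference is that you make the "stripped" profile $\tilde q_k$ explicit, while the paper writes $\|\nabla\dv^{-N}Y_k\|_{L^\infty(\tor)}$ directly; the bookkeeping and resulting powers of $\lambda,\mu_1,\mu_2,\omega$ agree.
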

\begin{proof}
The compact support follows from the properties of $S_N$ for $r^{Y,1}$ and the compact support of $a_k$ for $r^{Y,2}$, respectively. By Remark \ref{rem: improved antidiv}, Proposition \ref{prop: building blocks} and \eqref{est ak}, we can estimate
\begin{align*}
\|r^{Y,1}(s)\|_{L^\infty(\R^2)}&\leq C(\kappa)\frac{1}{\lambda^{N}} \|\partial_t a_k^2\|_{C^{N+1}(\R^2)} \left\|Y_k-\frac{1}{\omega}\xi_k\right\|_{L^\infty(\tor)}\\
&\leq C(R_0,u_0,e,\delta,\kappa,\varepsilon,N) \frac{\mu_1\mu_2}{\omega\lambda^{N}} .
\end{align*}
For the curl, we use again Remark \ref{rem: improved antidiv} and obtain
\begin{align*}
\|\curl r^{Y,1}(s)\|_{L^\infty(\R^2)}&\leq C(\kappa)\|\partial_t a^2\|_{C^{N+2}(\R^2)} \|\nabla\dv^{-N}Y_k\|_{L^\infty(\tor)}\\
&\leq C(R_0,u_0,e,\delta,\kappa,\varepsilon,N) \frac{\mu_1\mu_2^2}{\omega\lambda^{N-1}} \text{ for all } s\in [0,1]
\end{align*}
and therefore also 
\begin{align*}
\|\int_0^t\curl r^{Y,1}(s)\,\ds\|_{L^\infty(\R^2)}&\leq C(R_0,u_0,e,\delta,\kappa,\varepsilon,N) \frac{\mu_1\mu_2^2}{\omega\lambda^{N-1}}.
\end{align*}
Using that $\int_0^t\curl r^{\quadr}(s)\,\ds$ is supported in $B_{\kappa+1}$, we can apply Remark \ref{rem: hardy atoms} and obtain
\begin{align*}
\|\int_0^t\curl r^{Y,1}(s)\,\ds\|_{H^p(\R^2)}\leq  C(R_0,u_0,e,\delta,\kappa,\varepsilon,N) \frac{\mu_1\mu_2^2}{\omega\lambda^{N-1}}.
\end{align*}
For $r^{Y,2}$ we immediately get
\begin{align*}
\|r^{Y,2}(s)\|_{L^\infty(\R^2)}\leq C(R_0,u_0,e,\delta,\kappa,\varepsilon,N) \frac{1}{\omega}
\end{align*}
and also
\begin{align*}
\|\int_0^t\curl r^{Y,2}(s)\,\ds\|_{L^\infty(\R^2)} \leq C(R_0,u_0,e,\delta,\kappa,\varepsilon,N) \frac{1}{\omega}
\end{align*}
so that since $\supp \left(\int_0^t\curl r^{Y,2}(s)\,\ds\right)\subset B_{\kappa+1}$, we have by Remark \ref{rem: hardy atoms}
\begin{equation*}
\|\int_0^t\curl r^{Y,2}(s)\,\ds\|_{H^p\R^2)}\leq C(R_0,u_0,e,\delta,\kappa,\varepsilon,N) \frac{1}{\omega}.\qedhere
\end{equation*}
\end{proof}

\begin{lemma}[Estimate of $r^{\tme}$]\label{lemma: rtme}
The function $r^{\tme}$ has compact support and satisfies
\begin{align*}
\|r^{\tme}(t)\|_{L^\infty(\R^2)} &\leq C(R_0,u_0,e,\delta,\kappa,\varepsilon, N) \left(\frac{\mu_1^\frac{1}{2}\mu_2^\frac{1}{2}}{\lambda^N} + \frac{\omega\mu_1^\frac{3}{2}\mu_2^{-\frac{1}{2}}}{\lambda^N}\right),  \\
\|\int_0^t\curl r^{\tme}(s)\,\ds\|_{H^p(\R^2)} &\leq  C(R_0,u_0,e,\delta,\kappa,\varepsilon, N) \left(\frac{\mu_1^\frac{1}{2}\mu_2^\frac{3}{2}}{\lambda^{N-1}} + \frac{\omega\mu_1^\frac{3}{2}\mu_2^\frac{1}{2}}{\lambda^{N-1}}\right).
\end{align*}
\end{lemma}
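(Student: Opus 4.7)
The function $r^{\tme} = \sum_{i=1}^{8} r^{\tme, i}$ is a sum of eight pieces, and the plan is to treat each $r^{\tme, i}$ separately and then combine. Compact support of $r^{\tme}$ is immediate since each $r^{\tme, i}$ is produced by the operators $S_N$ or $\tilde{S}_N$ from Lemma \ref{antidivergence}, whose outputs lie in $C_c^\infty(\R^2;\R^2)$; their supports are contained in the supports of the associated amplitudes $a_k$, $b_k^j$, and hence in $B_{\kappa+1}$.

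For the $L^\infty$ bound I will apply Remark \ref{rem: improved antidiv} (and its obvious $\tilde S_N$ analogue) with $l=0$ to each summand. The input is always an amplitude, controlled via \eqref{est ak} in $C^N$, paired with either a building block from Proposition \ref{prop: building blocks} (for $r^{\tme, 1}, r^{\tme, 3}, r^{\tme, 5}, r^{\tme, 7}$) or with the matrix $A$ or $B$ from Definition \ref{def: antidivergence by hand} (for $r^{\tme, 2}, r^{\tme, 4}, r^{\tme, 6}, r^{\tme, 8}$), whose size is given by \eqref{est A B}. A key subtlety is that $A$ and $B$ already carry a hidden factor $\lambda^{-1}$ in $L^\infty$, reflecting that $\Psi$ is the second antiderivative of $\psi_{2,\mu_2}(\lambda\cdot)$; applying $\dv^{-N}$ then produces the total gain $\lambda^{-N-1}$. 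The dominant contributions come from $r^{\tme, 1}$, which yields $\mu_1^{1/2}\mu_2^{1/2}/\lambda^N$, and from $r^{\tme, 2}$, where the outer prefactor $\omega\lambda\mu_1$ combines with $\lambda^{-N-1}\mu_1^{1/2}\mu_2^{-1/2}$ to give $\omega\mu_1^{3/2}\mu_2^{-1/2}/\lambda^N$; the remaining six summands yield strictly smaller bounds (by factors $\mu_1/\mu_2 \ll 1$ or $1/\omega \ll 1$) and are absorbed.

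For the Hardy space estimate I use that
\begin{equation*}
\int_0^t \curl r^{\tme}(s)\,\ds \;=\; \curl \int_0^t r^{\tme}(s)\,\ds
\end{equation*}
is supported in the fixed ball $B_{\kappa+1}$ and, being the curl of a compactly supported field, has vanishing integral over $\R^2$. Since $2/3 < p < 1$, only the zeroth moment condition in Definition \ref{dfn: hardy atoms} must be checked (see Remark \ref{rem: hardy atoms}), so the atom estimate of Lemma \ref{lemma: hardy atoms} yields
\begin{equation*}
\Big\|\int_0^t \curl r^{\tme}(s)\,\ds\Big\|_{H^p(\R^2)} \leq C(\kappa)\max_{s\in[0,1]}\|\curl r^{\tme}(s)\|_{L^\infty(\R^2)}.
\end{equation*}
I then re-run the case analysis above with $l=1$ in Remark \ref{rem: improved antidiv}: each estimate loses one power of $\lambda$ compared to the $L^\infty$ case, while taking a derivative of a building block contributes an extra factor $\mu_2$ (the dominant oscillation direction). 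The dominant contributions become $\|\curl r^{\tme, 1}\|_{L^\infty} \lesssim \mu_1^{1/2}\mu_2^{3/2}/\lambda^{N-1}$ and $\|\curl r^{\tme, 2}\|_{L^\infty} \lesssim \omega\mu_1^{3/2}\mu_2^{1/2}/\lambda^{N-1}$, giving the stated Hardy space bound.

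The main obstacle will not be any single step but the bookkeeping: one must correctly identify the hidden $\lambda^{-1}$ inside $A$ and $B$ and carefully track the competing scales $\lambda, \mu_1, \mu_2, \omega$ across eight similar but not identical pieces, verifying in each case that the two terms $r^{\tme, 1}$ and $r^{\tme, 2}$ dominate. Beyond that, the proof is a mechanical combination of Proposition \ref{prop: building blocks}, \eqref{est A B}, Remark \ref{rem: improved antidiv}, and Remark \ref{rem: hardy atoms}, following exactly the template established in Lemma \ref{lemma: rquad1N} and Lemma \ref{lemma: rY}.
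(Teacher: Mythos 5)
Your proposal follows essentially the same approach as the paper: estimate each of the eight pieces $r^{\tme,i}$ separately via Lemma \ref{antidivergence} (equivalently Remark \ref{rem: improved antidiv}) combined with the bounds on $a_k$, $b_k^j$ from \eqref{est ak}, on the building blocks from Proposition \ref{prop: building blocks}, and on $A,B$ from \eqref{est A B}; then reduce the Hardy-space bound to an $L^\infty$ bound on $\curl r^{\tme}$ via compact support, zero mean, and Remark \ref{rem: hardy atoms}. You correctly identify the hidden $\lambda^{-1}$ carried by $A$ and $B$ (coming from $\Psi$ being the second antiderivative of the oscillating $\psi_2$), and you correctly identify $r^{\tme,1}$ and $r^{\tme,2}$ as the dominant pieces in both norms.

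One small inaccuracy that is worth flagging but does not affect the conclusion: the remaining six pieces are subdominant by factors of $\mu_1/\mu_2$ (for $r^{\tme,3},r^{\tme,4}$) or $1/(\lambda\mu_2)$ (for $r^{\tme,5},\dots,r^{\tme,8}$), not by $1/\omega$. None of the subdominant pieces gains a factor $1/\omega$ over the dominant ones; the $\omega$-dependence in $r^{\tme,2},r^{\tme,4},r^{\tme,6},r^{\tme,8}$ comes from $\partial_t W_k^\bullet \sim \omega$ and is not something to be absorbed. A reader following your plan literally might be momentarily puzzled when $1/\omega$ does not appear; the correct ratios should be stated.
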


\begin{proof}
We estimate the different parts of $r^{\tme}$ separately.
Again, by Remark \ref{rem: improved antidiv}, Proposition \ref{prop: building blocks}  and \eqref{est ak} we have
\begin{align*}
\|r^{\tme,1}(t)\|_{L^\infty(\R^2)}&\leq C(\kappa)\frac{1}{\lambda^{N}} \|\partial_t a_k\|_{C^{N}(\R^2)} \|W_k^p\|_{L^\infty(\tor)}\leq C(R_0,u_0,e,\delta,\kappa,\varepsilon,N) \frac{\mu_1^\frac{1}{2}\mu_2^\frac{1}{2}}{\lambda^{N}}
\end{align*}
and
\begin{align*}
\|\curl r^{\tme,1}(t)\|_{L^\infty(\R^2)}&\leq C(\kappa)\|\partial_t a\|_{C^{N+1}(\R^2)} \left\|\nabla\dv^{-N}W_k^p \right\|_{L^\infty(\tor)}\\
&\leq C(R_0,u_0,e,\delta,\kappa,\varepsilon,N) \frac{\mu_1^\frac{1}{2}\mu_2^\frac{3}{2}}{\lambda^{N-1}} \text{ for all } t\in [0,1]
\end{align*}
and therefore also
\begin{align*}
\left\|\int_0^t\curl r^{\tme,1}(s)\,\ds\right\|_{H^p(\R^2)}\leq C(R_0,u_0,e,\delta,\kappa,\varepsilon,N)\frac{\mu_1^\frac{1}{2}\mu_2^\frac{3}{2}}{\lambda^{N-1}}
\end{align*}
by Remark \ref{rem: hardy atoms}. For $r^{\tme,2}$, we have, using \eqref{est A B}
\begin{align*}
\|r^{\tme,2}(t)\|_{L^\infty(\R^2)}&\leq C(\kappa) \omega\lambda\mu_1 \|a_k\|_{C^{N+1}} \left\|\dv^{-N}A((\varphi')_{\mu_1}^k (\lambda(\cdot - \omega t)), \varphi_{\mu_2}(\lambda\cdot),\xi_k)\right\|_{L^\infty(\R^2)}\\
&\leq  C(R_0,u_0,e,\delta,\kappa,\varepsilon,N)  \omega\lambda\mu_1 \frac{\mu_1^\frac{1}{2}\mu_2^{-\frac{1}{2}}}{\lambda^{N+1}} = C(R_0,e,\delta,\kappa,\varepsilon)\frac{\omega\mu_1^{\frac{3}{2}}\mu_2^{-\frac{1}{2}}}{\lambda^N},\\
\|\curl r^{\tme,2}(t)\|_{L^\infty(\R^2)}&\leq C(\kappa) \omega\lambda\mu_1 \|a_k\|_{C^{N+2}} \left\|\nabla\dv^{-N}A((\varphi')_{\mu_1}^k (\lambda(\cdot - \omega t)), \varphi_{\mu_2}(\lambda\cdot),\xi_k)\right\|_{L^\infty(\R^2)}\\
&\leq C(R_0,u_0,e,\delta,\kappa,\varepsilon,N) \omega\lambda\mu_1  \frac{\mu_1^{\frac{1}{2}}\mu_2^\frac{1}{2}}{\lambda^N} =  C(R_0,e,\delta,\kappa,\varepsilon) \frac{\omega\mu_1^\frac{3}{2}\mu_2^\frac{1}{2}}{\lambda^{N-1}}
\end{align*}
and therefore also
\begin{align*}
\|\int_0^t\curl r^{\tme,2}(s)\,\ds\|_{H^p(\R^2)}\leq C(R_0,u_0,e,\delta,\kappa,\varepsilon,N) \frac{\omega\mu_1^\frac{3}{2}\mu_2^\frac{1}{2}}{\lambda^{N-1}}.
\end{align*}
In the same manner, we estimate
\begin{align*}
\|r^{\tme,3}(t)\|_{L^\infty(\R^2)} &\leq C(\kappa) \frac{1}{\lambda^N}\|\partial_t a_k\|_{C^N(\R^2)}\|W_k^c\|_{L^\infty(\R^2)}\\
&\leq C(R_0,u_0,e,\delta,\kappa,\varepsilon,N) \frac{\mu_1^\frac{3}{2}\mu_2^{-\frac{1}{2}}}{\lambda^{N}},\\
\|r^{\tme,4}(t)\|_{L^\infty(\R^2)}&\leq C(\kappa)\frac{\omega\lambda\mu_1^2}{\mu_2}\|a_k\|_{C^{N+1}(\R^2)}\left\|\dv^{-N}B((\varphi'')^k_{\mu_1}(\lambda(\cdot -\omega t)),(\Phi'')_{\mu_2}(\lambda\cdot),\xi_k)\right\|_{L^\infty(\tor)},\\
&\leq  C(R_0,u_0,e,\delta,\kappa,\varepsilon,N)\frac{\omega\lambda\mu_1^2}{\mu_2} \frac{\mu_1^{\frac{1}{2}}\mu_2^{-\frac{1}{2}}}{\lambda^{N+1}} = C(R_0,e,\delta,\kappa,\varepsilon)\frac{\omega\mu_1^\frac{5}{2}\mu_2^{-\frac{3}{2}}}{\lambda^N},\\
\|r^{\tme,5}(t)\|_{L^\infty(\R^2)} &\leq C(\kappa)\frac{1}{\lambda^N}\|\partial_t b_k^1\|_{C^N(\R^2)} \|W_k^{cc,\parallel}\|_{L^\infty(\R^2)}\\
&\leq  C(R_0,u_0,e,\delta,\kappa,\varepsilon,N) \frac{\mu_1^{\frac{1}{2}}\mu_2^{-\frac{1}{2}}}{\lambda^{N+1}},\\
\|r^{\tme,6}(t)\|_{L^\infty(\R^2)}&\leq C(\kappa) \frac{\omega\mu_1}{\mu_2} \|b_k^1\|_{C^{N+1}(\R^2)}\left\|\dv^{-N}A((\varphi')^k_{\mu_1}(\lambda(\cdot -\omega t)),(\Phi'')_{\mu_2}(\lambda\cdot),\xi_k)\right\|_{L^\infty(\R^2)}\\
&\leq  C(R_0,u_0,e,\delta,\kappa,\varepsilon,N) \frac{\omega\mu_1}{\mu_2} \frac{\mu_1^{\frac{1}{2}}\mu_2^{-\frac{1}{2}}}{\lambda^{N+1}} = C(R_0,e,\delta,\kappa,\varepsilon)\frac{\omega\mu_1^\frac{3}{2}\mu_2^{-\frac{3}{2}}}{\lambda^{N+1}},\\
\|r^{\tme,7}(t)\|_{L^\infty(\R^2)} &\leq C(\kappa)\frac{1}{\lambda^N}\|\partial_t b_k^2\|_{C^N(\R^2)} \|W_k^{cc,\perp}\|_{L^\infty(\R^2)}\\
&\leq  C(R_0,u_0,e,\delta,\kappa,\varepsilon,N) \frac{\mu_1^{\frac{1}{2}}\mu_2^{-\frac{1}{2}}}{\lambda^{N+1}},\\
\|r^{\tme,8}(t)\|_{L^\infty(\R^2)}&\leq C(\kappa) \frac{\omega\mu_1}{\mu_2} \|b_k^2\|_{C^{N+1}(\R^2)}\left\|\dv^{-N}B((\varphi')^k_{\mu_1}(\lambda(\cdot -\omega t)),(\Phi'')_{\mu_2}(\lambda\cdot),\xi_k)\right\|_{L^\infty(\R^2)}\\
&\leq  C(R_0,u_0,e,\delta,\kappa,\varepsilon,N) \frac{\omega\mu_1}{\mu_2} \frac{\mu_1^{\frac{1}{2}}\mu_2^{-\frac{1}{2}}}{\lambda^{N+1}} =C(R_0,e,\delta,\kappa,\varepsilon)\frac{\omega\mu_1^\frac{3}{2}\mu_2^{-\frac{3}{2}}}{\lambda^{N+1}},
\end{align*}
and
\begin{align*}
\|\int_0^t\curl r^{\tme,3}(s)\,\ds\|_{H^p(\R^2)}&\leq C(\kappa)\|\partial_t a_k\|_{C^{N+1}(\R^2)}\left\|\nabla \dv^{-N}W_k^c\right\|_{L^\infty(\tor)}\\
 &\leq C(R_0,u_0,e,\delta,\kappa,\varepsilon,N) \frac{\mu_1^\frac{3}{2}\mu_2^{\frac{1}{2}}}{\lambda^{N-1}},\\
 \|\int_0^t\curl r^{\tme,4}(s)\,\ds\|_{H^p(\R^2)}&\leq C(\kappa)\frac{\omega\lambda\mu_1^2}{\mu_2}\|a_k\|_{C^{N+2}(\R^2)}\\
 &\hspace{0,3cm}\cdot\left\|\nabla\dv^{-N}B((\varphi'')^k_{\mu_1}(\lambda(\cdot -\omega t)),(\Phi'')_{\mu_2}(\lambda\cdot),\xi_k)\right\|_{L^\infty(\tor)},\\
&\leq  C(R_0,u_0,e,\delta,\kappa,\varepsilon,N)\frac{\omega\lambda\mu_1^2}{\mu_2} \frac{\mu_1^{\frac{1}{2}}\mu_2^{\frac{1}{2}}}{\lambda^{N}} = C(R_0,e,\delta,\kappa,\varepsilon)\frac{\omega\mu_1^\frac{5}{2}\mu_2^{-\frac{1}{2}}}{\lambda^{N-1}}, \\
\|\int_0^t\curl r^{\tme,5}(s)\,\ds\|_{H^p(\R^2)} &\leq C(\kappa) \|\partial_t b_k^1\|_{C^{N+1}(\R^2)} \|\nabla\dv^{-N}W_k^{cc,\parallel}\|_{L^\infty(\R^2)}\\
&\leq C(R_0,u_0,e,\delta,\kappa,\varepsilon,N) \frac{\mu_1^{\frac{1}{2}}\mu_2^{\frac{1}{2}}}{\lambda^{N}},\\
\|\int_0^t\curl r^{\tme,6}(s)\,\ds\|_{H^p(\R^2)} &\leq C(\kappa) \frac{\omega\mu_1}{\mu_2}\|b_k^1\|_{C^{N+2}(\R^2)}\\
&\hspace{0,3cm}\cdot\left\|\nabla\dv^{-N}A((\varphi')^k_{\mu_1}(\lambda(\cdot -\omega t)),(\Phi'')_{\mu_2}(\lambda\cdot),\xi_k)\right\|_{L^\infty(\R^2)}\\
&\leq C(R_0,u_0,e,\delta,\kappa,\varepsilon,N) \frac{\omega\mu_1}{\mu_2} \frac{\mu_1^{\frac{1}{2}}\mu_2^{\frac{1}{2}}}{\lambda^{N}}=C(R_0,e,\delta,\kappa,\varepsilon)\frac{\omega\mu_1^\frac{3}{2}\mu_2^{-\frac{1}{2}}}{\lambda^{N}},\\
\|\int_0^t\curl r^{\tme,7}(s)\,\ds\|_{H^p(\R^2)} &\leq C(\kappa) \|\partial_t b_k^2\|_{C^{N+1}(\R^2)} \|\nabla\dv^{-N}W_k^{cc,\perp}\|_{L^\infty(\R^2)}\\
&\leq C(R_0,u_0,e,\delta,\kappa,\varepsilon,N) \frac{\mu_1^{\frac{1}{2}}\mu_2^{\frac{1}{2}}}{\lambda^{N}},\\
\|\int_0^t\curl r^{\tme,8}(s)\,\ds\|_{H^p(\R^2)} &\leq C(\kappa) \frac{\omega\mu_1}{\mu_2}\|b_k^2\|_{C^{N+2}(\R^2)}\\
&\hspace{0,3cm}\cdot\left\|\nabla\dv^{-N}B((\varphi')^k_{\mu_1}(\lambda(\cdot -\omega t)),(\Phi'')_{\mu_2}(\lambda\cdot),\xi_k)\right\|_{L^\infty(\R^2)}\\
&\leq C(R_0,u_0,e,\delta,\kappa,\varepsilon,N) \frac{\omega\mu_1}{\mu_2} \frac{\mu_1^{\frac{1}{2}}\mu_2^{\frac{1}{2}}}{\lambda^{N}}=C(R_0,u_0,e,\delta,\kappa,\varepsilon,N)\frac{\omega\mu_1^\frac{3}{2}\mu_2^{-\frac{1}{2}}}{\lambda^{N}}.\qedhere
\end{align*}
\end{proof}

\section{Proof of the main proposition}\label{sec: proof of main prop}
Proposition \ref{prop: main proposition} is proved by choosing all the parameters appropriately, which we do in this section. Let us set 
\begin{itemize}
\item $\mu_1= \lambda^\alpha$,
\item $\mu_2 = \lambda \mu_1= \lambda^{1+\alpha}$,
\item $\omega = \lambda^\beta$
\end{itemize}
for some $\alpha,\beta>0$ to be chosen below. We collect the estimates from Section \ref{sec: estimates perturbations} and \ref{sec: curl estimates} where the parameters $\mu_1,\mu_2$ and $\omega$ need to be balanced.\vspace{0,3cm}\\
\begin{tabular}[h]{l|l|l|l}
Lemma & Term & Order & = \\
\hline
\ref{lemma: estimate correctors} & $u^c$ in $L^2(\R^2)$ & $\mu_1\mu_2^{-1}$ & $\lambda^{-1}$\\
\ref{lemma: estimate correctors}& $u^t$ in $L^2(\R^2)$ & $\omega^{-1}\mu_1^\frac{1}{2}\mu_2^\frac{1}{2}$ & $\lambda^{-\beta +\alpha + \frac{1}{2}}$\\
\ref{lemma: energy inc} & Energy increment & $\mu_1^{-\frac{1}{6}}\mu_2^{-\frac{1}{6}}$ & $\lambda^{-\frac{1}{3}\alpha-\frac{1}{6}}$\\
\ref{lemma: curl w} & $\curl w$ in $H^p(\R^2)$ & $\lambda\mu_1^{\frac{1}{2}-\frac{2}{p}}\mu_2^\frac{3}{2}$ & $\lambda^{\frac{5}{2} + \alpha(2-\frac{2}{p})}$\\

\ref{lemma: curl ut} & $\curl u^t$ in $H^p(\R^2)$ & $\omega^{-1}\lambda\mu_1^{1-\frac{2}{p}}\mu_2^2$ & $\lambda^{3-\beta +\alpha(3-\frac{2}{p})}$\\
\ref{lemma: est rtime} & $R^{\tme}$ in $L^1(\R^2)$ & $\omega\mu_1^\frac{1}{2}\mu_2^{-\frac{3}{2}}$ & $\lambda^{\beta-\alpha-\frac{3}{2}}$\\
\ref{lemma: rquad1N} & $r^{\quadr}$ in $L^\infty(\R^2)$ & $\lambda^{-N}\mu_1\mu_2$ & $\lambda^{2\alpha+ 1 -N}$\\
\ref{lemma: rY} & $r^Y$ in $L^\infty(\R^2)$ & $\omega^{-1}\lambda^{-N}\mu_1\mu_2$ & $\lambda^{-\beta+2\alpha+1-N}$\\
\ref{lemma: rtme} & $r^{\tme}$ in $L^\infty(\R^2)$ & $\lambda^{-N}\mu_1^\frac{1}{2}\mu_2^\frac{1}{2}+\omega\lambda^{-N}\mu_1^\frac{3}{2}\mu_2^{-\frac{1}{2}}$ & $\lambda^{\alpha+\frac{1}{2}-N} + \lambda^{\beta+\alpha-\frac{1}{2}-N}$\\
\ref{lemma: rquad1N} & $\int_0^t\curl r^{\quadr}(s)\,\ds$ in $H^p(\R^2)$ & $\lambda^{1-N}\mu_1\mu_2^2$ & $\lambda^{3\alpha +3 -N}$\\
\ref{lemma: rY} & $\int_0^t\curl r^{Y}(s)\,\ds$ in $H^p(\R^2)$ & $\omega^{-1}\lambda^{1-N}\mu_1\mu_2^2$ & $\lambda^{-\beta +3\alpha +3 -N}$\\
\ref{lemma: rtme} & $\int_0^t\curl r^{\tme}(s)\,\ds$ in $H^p(\R^2)$ & $\lambda^{1-N}\mu_1^\frac{1}{2}\mu_2^\frac{3}{2} + \omega\lambda^{1-N}\mu_1^\frac{3}{2}\mu_2^\frac{1}{2}$ & $\lambda^{2\alpha+\frac{5}{2}-N}$\\
&&& $+\lambda^{\beta+2\alpha+\frac{3}{2}-N}$
\end{tabular}
\vspace{0,3cm}  \\We choose $\alpha,\beta$ and $N$ such that all the exponents in the fourth column of the previous table are negative. This is clear for the first and the third row. Since $2-\frac{2}{p}<0$, we can choose $\alpha \gg 1$ so large such that
\begin{align*}
\frac{5}{2} + \alpha(2-\frac{2}{p}) &< 0
\end{align*}
i.e. we have negative exponents in Line 4. Furthermore, since $3-\frac{2}{p}<1$, let us choose $\alpha$ large enough such that $$3+ \alpha(3-\frac{2}{p})<\alpha+\frac{1}{2}.$$ With this choice of $\alpha$, we only need $\beta$ to satisfy
\begin{align*}
3+\alpha(3-\frac{2}{p})<\alpha + \frac{1}{2} <\beta < \alpha +\frac{3}{2}.
\end{align*}
With such a $\beta$, Line 1 -- 6 have negative exponents of $\lambda$. Having $\alpha$ and $\beta$ fixed, it only remains to choose $N$. Since $N$ enters all the remaining exponents with a negative sign, we can simply pick $N\in\N$ large enough such that all exponents are negative.
Let 
\begin{align*}
\gamma_0 = \text{ exponent in the table with the smallest magnitude}
\end{align*}
which satisfies $\gamma_0 <0$ by our choice of $\alpha,\beta, N$.
We can now verify the claims of Proposition \ref{prop: main proposition}. For $(i)$, we have by \eqref{energy increment}
\begin{align*}
\left|e(t) \left(1-\frac{\delta}{2}\right) - \int_{\R^2} |u_1|^2\,\dx\right|&<  \frac{1}{8}\delta e(t) +  C(R_0,r_0, u_0,e, \delta,\kappa,\varepsilon)\lambda^{\gamma_0}
\end{align*}
and we can choose $\lambda$ large enough such that $(i)$ is satisfied. For $(v)$, we use Lemma \ref{lemma: estimate up},  \mbox{Lemma \ref{lemma: estimate correctors}} and Lemma \ref{lemma: v in L2}
\begin{align*}
\|(u_1-u_0)(t)\|_{L^2(\R^2)}&\leq \|u^p(t)\|_{L^2(\R^2)} + \|u^c(t)\|_{L^2(\R^2)}  + \|u^t(t)\|_{L^2(\R^2)} + \|v(t)\|_{L^2(\R^2)}\\
&\leq10 \delta^\frac{1}{2}+\frac{ C(\kappa,\varepsilon)}{\lambda^\frac{1}{2}} + C(R_0,u_0,e,\delta,\kappa,\varepsilon)\lambda^{\gamma_0} + \|r_0\|_{C_tL^2_x}
\end{align*}
Using that $\|r_0\|_{C_tL^2_x}\leq \frac{1}{32}\delta$ by assumption, we can choose $\lambda$ large enough such that 
\begin{align*}
\|u_1-u_0(t)\|_{L^2(\R^2)}&\leq 11 \delta^\frac{1}{2} ,
\end{align*}
i.e. $(v)$ is satisfied with $M_0 = 11$.
For $(vi)$, we use Lemmas \ref{lemma: curl w}, \ref{lemma: curl ut} and \ref{lemma: curl v}
\begin{align*}
\|\curl (u_1 - u_0)(t)\|^p_{H^p(\R^2)} &\leq \|\curl w (t)\|^p_{H^p(\R^2)} + \|\curl u^t(t)\|^p_{H^p(\R^2)} + \|\curl v(t)\|^p_{H^p(\R^2)}\\
&\leq C(R_0,u_0,e,\delta,\kappa,\varepsilon)\lambda^{p\gamma_0} + \|\int_0^t \curl r_0(s)\,\ds\|^p_{H^p(\R^2)} 
\end{align*}
and we can choose $\lambda$ large enough such that $(vi)$ is satisfied. For $(iv)$, we have by Lemma \ref{lemma: est rlin},  \ref{lemma: est rlin2},  \ref{lemma: est rlin3}, \ref{lemma: est rkappa},  \ref{lemma: est rquadr}, \ref{lemma: est ry} and \ref{lemma: est rtime}
\begin{align*}
\|R_1(t)\|_{L^1(\R^2)}&\leq \|R^{\operatorname{lin},1}(t)\|_{L^1(\R^2)} + \|R^{\operatorname{lin},2}(t)\|_{L^1(\R^2)} + \|R^{\operatorname{lin},3}(t)\|_{L^1(\R^2)}
\\
 &\hspace{0,3cm} +\|R^\kappa(t)\|_{L^1(\R^2)} +  \|R^{\quadr}(t)\|_{L^1(\R^2)} + \|R^Y(t)\|_{L^1(\R^2)}  + \|R^{\tme}(t)\|_{L^1(\R^2)}\\
&\leq \frac{\eta}{2} +4\|r_0\|_{C_tL_x^2}^2 + 2\|r_0\|_{C_tL_x^2} \|u_0(t)\|_{L^2(\R^2)} +C(R_0,u_0,e,\delta,\kappa,\varepsilon,N) \lambda^{\gamma_0}.
\end{align*}
Noting that $\|r_0(t)\|_{L^2(\R^2)}^2\leq \|r_0(t)\|_{L^2(\R^2)}$ since $\|r_0(t)\|_{L^2(\R^2)}\leq 1$ by assumption, we can choose $\lambda$ large enough to obtain $(iv)$. For $(ii)$, we have because of the compact support of $r_1$
\begin{align*}
\|r_1(t)\|_{L^2(\R^2)} &\leq C(\kappa) \|r_1(t)\|_{L^\infty(\R^2)} \\
&\leq C(\kappa)\left(\|r^{\quadr}(t)\|_{L^\infty(\R^2)} + \|r^Y(t)\|_{L^\infty(\R^2)} + \|r^{\tme}(t)\|_{L^\infty(\R^2)}\right)\\
& \leq C(R_0,u_0,e,\delta,\kappa,\varepsilon, N)\lambda^{\gamma_0},
\end{align*}
and by the previous estimate on $\|u_1(t)-u_0(t)\|_{L^2(\R^2)}$
\begin{align*}
\|u_1\|_{C_tL^2_x} \|r_1(t)\|_{L^2(\R^2)}&\leq \|u_0\|_{C_tL^2_x} \|r_1(t)\|_{L^2(\R^2)} + \|u_1-u_0\|_{C_tL^2_x} \|r_1(t)\|_{L^2(\R^2)}\\
&\leq C(R_0,r_0,u_0,\delta,\kappa,\varepsilon, N)\lambda^{\gamma_0} .
\end{align*}
Finally, we also have
\begin{align*}
\|\int_0^t\curl r_1(s)\,\ds\|^p_{H^p(\R^2)} &\leq \|\int_0^t\curl r^{\quadr}(s)\,\ds\|^p_{H^p(\R^2)} + \|\int_0^t\curl r^Y(s)\,\ds\|^p_{H^p(\R^2)} \\
&\hspace{0,3cm}+ \|\int_0^t\curl r^{\tme}(s)\,\ds\|^p_{H^p(\R^2)}\\
&\hspace{0,3cm} \leq C(R_0,u_0,e,\delta,\kappa,\varepsilon,N) \lambda^{p\gamma_0}.
\end{align*}
Again, $\lambda$ can be chosen large enough such that $(iii)$ is satisfied. Assume we have given two energy profiles $e_1,e_2$ with $e_1=e_2$ on $[0,t_0]$ for some $t_0\in[0,1]$.  The values that we add with $w(t), u_c(t), u^t(t)$ depend only on pointwise (in time) values of the previous steps, while $v(t)$ depends only on values of the previous steps on $[0,t]$. Therefore, one can do the construction for $e_1$ and $e_2$ simultaneously, choosing the same values for all the parameters in each iteration step, thereby producing two solutions $u_1$, $u_2$ to \eqref{2D Euler} that satisfy $u_1=u_2$ on $[0,t_0]$.

\nocite{*}
\bibliographystyle{alpha}
\bibliography{bibliography}
\end{document}